\newtheorem{lem}{Lemma}
\newtheorem{prop}{Proposition}
\newtheorem{cor}{Corollary}
\newtheorem{deff}{Definition}
\newtheorem{thm}{Theorem}
\newcommand{\N}{\mathbb{N}}
\newcommand{\E}{\mathbb{E}}
\newcommand{\Prob}{\mathbb{P}}
\newcommand{\Var}{\operatorname{Var}}
\newcommand{\Z}{\mathbb{Z}}
\newcommand{\Xn}{(X_n)_{n\in\mathbb{Z}}}
\newcommand{\wur}{^{\frac{1}{2}}}
\newcommand{\del}{^{\frac{\delta}{2(2+\delta)}}}
\newcommand{\bm}{\beta_{m-2l}^{\frac{\delta}{2+\delta}}}
\begin{document}
\bibliographystyle{plainnat}

\title[Robust Change-Point Detection for Functional Time Series]{Robust Change-Point Detection for Functional Time Series Based on $U$-Statistics and Dependent Wild Bootstrap}

\author{Lea Wegner}

\date{\today}

\author{Martin Wendler}
\address{Otto-von-Guericke-Universit\"at Magdeburg, Germany}
\email{martin.wendler@ovgu.de}

\thanks{The research was supported by the German Research Foundation (Deutsche Forschungsgemeinschaft – DFG), project WE 5988/3 \emph{Analyse funktionaler Daten ohne Dimensionsreduktion}. We thank Claudia Kirch for fruitfull discussions on the topic. We are gratefule for the  useful and detailed comments two anonymous refrees have provided.}

\begin{abstract} The aim of this paper is to develop a change-point test for functional time series that uses the full functional information and is less sensitive to outliers compared to the classical CUSUM test. For this aim, the Wilcoxon two-sample test is generalized to functional data. To obtain the asymptotic distribution of the test statistic, we prove a limit theorem for a process of $U$-statistics with values in a Hilbert space under weak dependence. Critical values can be obtained by a newly developed version of the dependent wild bootstrap for non-degenerate 2-sample $U$-statistics.

\end{abstract}

\subjclass[2020]{62R10; 62G35; 62M10; 62F40}

\maketitle

\thispagestyle{empty}

\section{Introduction}

Statistical methods for observations consisting of functions are widely discussed since at least the work by \cite{ramsay1982data}, and there is a growing interest in recent years because more and more data is available in high resolution that can not be treated as multivariate data. Functional data analysis might even be helpful for one-dimensional time series (see e.g. \cite{hoerkok09}). Functional observations are often modelled as random variables taking values in a Hilbert space, we recommend the book by \cite{hormann2012functional} for an introduction.

In this paper, we will propose new methods for the detection of change-points: Suppose that we observe $X_1,...,X_n$ being a part of a time series  $\Xn$  with values in a separable Hilbert space $H$ (equipped with inner product $\langle \cdot,\cdot, \rangle$ and norm $\|\cdot\|=\sqrt{\langle \cdot,\cdot \rangle}$). The \emph{at most one change-point problem} is to test the null hypothesis of stationarity against the alternative of an abrupt change of the distribution at an unknown time point $k^\star$: $X_1\stackrel{\mathcal{D}}{=}...\stackrel{\mathcal{D}}{=}X_{k^\star}$ and $X_{k^\star+1}\stackrel{\mathcal{D}}{=}...\stackrel{\mathcal{D}}{=}X_{n}$, but $X_1\stackrel{\mathcal{D}}{\neq}X_n$ (where $X_i\stackrel{\mathcal{D}}{=}X_j$ means that $X_i$ and $X_j$ have the same distribution).

Functional data is often projected on lower dimensional spaces with functional principal components, see \cite{berkes2009detecting} for a change in mean of independent data and \cite{aston2012detecting} for a change in mean of time series. \cite{fremdt2014functional} proposed to let the dimension on the subspace on which the data is projected grow with the sample size. But is is also possible to use change-point tests without dimension reduction as done by \cite{horvath2014testing} under independence, by \cite{STW16} and \cite{aue2018detecting} under dependence. Since using the asymptotic distribution would require knowledge of the infinite-dimensional covariance operator, it is convenient to use bootstrap methods. In the context of change-point detection for functional time series,  the nonoverlapping block bootstrap was studied by \cite{STW16}, the dependent wild bootstrap by \cite{bucchia2017change} and the block multiplier bootstrap (for Banach-space-valued times series) by \cite{dette2020functional}.

Typically, these tests are based on variants of the CUSUM-test, where CUSUM stands for cumulated sums. Such tests  make use of sample means and thus, they are sensitive to outliers. For real-valued time series, several authors have constructed more robust tests  based on the Mann–Whitney–Wilcoxon-$U$-test. For the two-sample problem (do the two real-valued samples $X_1,...,X_{n_1}$ and $Y_1,...,Y_{n_2}$ have the same location?), the Mann–Whitney–Wilcoxon-$U$-statistic can be written as
\begin{equation*}
U(X_1,...,X_{n_1},Y_1,...,Y_{n_2})=\frac{1}{n_1n_2}\sum_{i=1}^{n_1}\sum_{j=1}^{n_2}\operatorname{sgn}(X_i-Y_j)=\frac{1}{n_1n_2}\sum_{i=1}^{n_1}\sum_{j=1}^{n_2}\frac{X_i-Y_j}{|X_i-Y_j|}
\end{equation*}
(where $0/0$ is set to $0$). \cite{CC17} have generalized this test statistic to Hilbert spaces by replacing the sign by the so called spatial sign:
\begin{equation*}
U(X_1,...,X_{n_1},Y_1,...,Y_{n_2})=\frac{1}{n_1n_2}\sum_{i=1}^{n_1}\sum_{j=1}^{n_2}\frac{X_i-Y_j}{\|X_i-Y_j\|}
\end{equation*}
They have shown the weak convergence to a Gaussian distribution for independent random variables. For change-point detection, one encounters several problems: In practice, the change-point is typically unknown, so it is not known where to split the sequence of the observations into two samples. In many applications, the assumption of independence is not realistic, one rather has to deal with time series. Furthermore, the covariance operator is not known.

To deal with these problems, we will study limit theorems for two-sample $U$-processes with values in Hilbert spaces and deduce the asymptotic distribution of the Wilcoxon-type change-point-statistic
\begin{equation*}
\max_{k=1,...,n-1}\Big\|\frac{1}{n^{3/2}}\sum_{i=1}^k\sum_{j=k+1}^n\frac{X_i-X_j}{\|X_i-X_j\|}\Big\|
\end{equation*}
for a short-range dependent, Hilbert-space-valued time series $\Xn$. Change-point tests based on Wilcoxon have been studied before, but mainly for real-valued observations, starting with \cite{darkhovsky1976non} and \cite{pettitt1979non}. \cite{yu2022robust} used the maximum of componentwise Wilcoxon-type statistics. Very recently and independently of our work, \cite{jiang2022robust} introduced a test statistic based on spatial signs for independent, high-dimensional observations, which is very similar to the square of our test statistic. However, \cite{jiang2022robust} obtained the limit for a growing dimension of the observations and assuming that the entries of each vector form a stationary, weakly dependent time series, while we consider observations in a fixed Hilbert space $H$ and take the limit for a growing number of observations. Furthermore, they use self-normalization instead of bootstrap to obtain critical values.

Let us note that spatial signs have been used for change-point detection before by other authors: \cite{vogel2015robust} have studied a robust test for changes in the dependence structure of a finite-dimensional time series based on the spatial sign covariance matrix.

As the Mann–Whitney–Wilcoxon-$U$-statistic is a special case of a two-sample $U$-statistic, authors like \cite{csoehorinvpr}, \cite{gombay2002rates} studied more general $U$-statistics for change point detection under independence and \cite{DFGW13}  under dependence. We will provide our theory not only for the special case of the test statistic based on spatial signs, but for general test statistics based on two-sample $H$-valued $U$-statistics under dependence.

As the limit depends on the unknown, infinite-dimensional long-run covariance operator, one would either need to estimate this operator, or one could use resampling techniques. \cite{LN13} have developed a variant of the dependent wild bootstrap (introduced by \cite{S10}) for $U$-statistics. However, their method works only for degenerate $U$-statistics. As the Wilcoxon-type statistic is non-degenerate, we propose a new version of the dependent wild bootstrap for this type of $U$-statistic. The bootstrap version of our change-point test statistic is
\begin{equation*}
\max_{k=1,...,n-1}\Big\|\frac{1}{n^{3/2}}\sum_{i=1}^k\sum_{j=k+1}^n\frac{X_i-X_j}{\|X_i-X_j\|}(\varepsilon_i+\varepsilon_j)\Big\|,
\end{equation*}
where $\varepsilon_1,...,\varepsilon_n$ is a stationary sequence of dependent $N(0,1)$-distributed multipliers, independent of $X_1,...,X_n$. We will prove the asymptotic validity of our new bootstrap method. Our variant of the dependent wild bootstrap is similar, but not identical to the variant proposed by \cite{doukhan2015dependent} for non-degenerate von Mises statistics. Note that this bootstrap differs from the multiplier bootstrap proposed by \cite{bucher2016dependent}, as it does not rely on pre-linearization, that means replacing the $U$-statistic by a partial sum.

\section{Main Results}

We will treat the CUSUM statistic and the Wilcoxon-type statistic as two special cases of a general class based on two-sample $U$-statistics. Let $h:H^2\rightarrow H$ be a kernel function. We define
\begin{equation*}
U_{n,k}=\sum_{i=1}^k\sum_{j=k+1}^nh(X_i,X_j).
\end{equation*}
For $h(x,y)=x-y$, we obtain with a short calculation
\begin{equation*}
\max_{1\leq k < n}\frac{1}{n^{3/2}}\left\|U_{n,k}\right\|=\max_{1\leq k < n}\frac{1}{\sqrt{n}}\Big\|\sum_{i=1}^k\big(X_i-\frac{1}{n}\sum_{j=1}^n X_j\big)\Big\|,
\end{equation*}
which is the CUSUM-statistic for functional data. On the other hand, with the kernel $h(x,y)=(x-y)/\|x-y\|$, we get the Wilcoxon-type statistic. Other kernels would be possible, e.g. $h(x,y)=(x-y)/(c+\|x-y\|)$ for some $c>0$ as a compromise between the CUSUM and the Wilcoxon approach. Before stating our limit theorem for this class based on two-sample $U$-statistics, we have to define some concepts and our assumptions.

We will start with our concept of short range dependence, which is based on a combination of absolute regularity (introduced by \cite{volkonskii1959some}) and $P$-near-epoch dependence (introduced by \cite{DVWW17}). In the following, let $H$ be a separable Hilbert space with inner product $\langle \cdot,\cdot \rangle$ and norm $\|x\|=\sqrt{\langle x,x\rangle}$.

\begin{deff}[Absolute Regularity] Let $(\zeta_n)_{n\in\mathbb{Z}}$ be a stationary sequence of random variables. We define the mixing coefficients $(\beta_m)_{m\in\Z}$ by
\begin{equation*}
\beta_m=E\Big[\sup_{A\in\mathcal{F}_{m}^\infty}\left(P(A|\mathcal{F}_{-\infty}^{0})-P(A)\right)\Big],
\end{equation*}
where $\mathcal{F}_{a}^b$ is the $\sigma$-field generated by $\zeta_{a},\ldots,\zeta_b$, and call the sequence $(\zeta_n)_{n\in\Z}$ absolutely regular if $\beta_m\rightarrow 0$ as $m\rightarrow\infty$. 
\end{deff}

\begin{deff}[P-NED]
Let $(\zeta_n)_{n\in\mathbb{Z}}$ be a stationary sequence of random variables. $(X_n)_{n\in\mathbb{Z}}$ is called near-epoch-dependent in probability (P-NED) on $(\zeta_n)_{n\in\mathbb{Z}}$  if there exist sequences $(a_k)_{k\in\mathbb{N}}$ with $a_k \xrightarrow{k \to \infty} 0$ and $(f_k)_{k\in\mathbb{Z}}$ and a nonincreasing function $\Phi:(0,\infty) \rightarrow (0,\infty)$ such that 
\[ \mathbb{P}(\Vert X_0-f_k(\zeta_{-k},...,\zeta_k)\Vert > \epsilon) \leq a_k\Phi(\epsilon) \,\,\, \forall k\in \mathbb{N},\, \epsilon >0 .\]
\end{deff}

\begin{deff}[$L_p$-NED]
Let $(\zeta_n)_{n\in\mathbb{Z}}$ be a stationary sequence of random variables. $(X_n)_{n\in\mathbb{Z}}$ is called $L_p$-NED on $(\zeta_n)_{n\in\mathbb{Z}}$ if there exists a sequence of approximation constants $(a_k)_{k\in\mathbb{N}}$ with $a_k \xrightarrow{k\to\infty} 0$ and 
\[ \mathbb{E}[\Vert X_0-\mathbb{E}[X_0| \mathfrak{F}_{-k}^k] \Vert^p  ]^{\frac{1}{p}} \leq a_{k,p} .\] 
\end{deff}
P-NED has the advantage of not implying finite moments (unlike $L_p$-NED), which is useful to allow for heavy tailed distributions.

Additionally, we will need assumptions on the kernel:
\begin{deff}[Antisymmetry] A kernel $h:H^2\rightarrow H$ is called antisymmetric, if for all $x,y\in H$
\begin{equation*}
h(x,y)=-h(y,x).
\end{equation*}
\end{deff}
Antisymmetric kernels are natural candidates for comparing two distributions, because if $X$ and $\tilde{X}$ are independent, $H$-valued random variables with the same distribution and $h$ is antisymmetric, we have $E[h(X,\tilde{X})]=0$, so our test statistic should have values close to 0, see also \cite{ravckauskas2020convergence}. 
\begin{deff}[Uniform Moments] If  there is a $M>0$ such that for all $k,n \in\mathbb{N}$
\[\mathbb{E}[\Vert h\big(f_k(\zeta_{-k},...,\zeta_k),f_k(\zeta_{n-k},...,\zeta_{n+k})\big)\Vert_{{H}}^{m}] \leq M,  \]
\[ \mathbb{E}[\Vert h\big(X_0,f_k(\zeta_{n-k},...,\zeta_{n+k})\big)\Vert_{{H}}^{m}] \leq M, \]
\[ \mathbb{E}[\Vert h\big(X_0,X_n\big)\Vert_{{H}}^{m}] \leq M,  \] 
we say that the kernel has uniform $m$-th moments under approximation.
\end{deff}
Furthermore, we need the following mild continuity condition on the kernel, which is called variation condition and was introduced by  \cite{denker1986rigorous}. The kernel $h(x,y)=(x-y)/\|x-y\|$ will fulfill the condition, as long as there exists a constant $C$ such that $P(\|X_1-x\|\leq \epsilon)\leq C\epsilon$ for all $x\in H$ and $\epsilon>0$. This can be proved along the lines of Remark 2 in \cite{dehling2022change}.  $P(\|X_1-x\|\leq \epsilon)\leq C\epsilon$ for all $x\in H$, $\epsilon>0$ does not hold if the distribution of $X_1$ has points with positive mass, but it still can hold if  the distribution is concentrated on finite-dimensional sub-spaces.

\begin{deff}[Variation condition] The kernel $h$ fulfills the variation condition if there exist $L$, $\epsilon_0 > 0$ such that for every $\epsilon \in (0, \epsilon_0)$:
\[ \mathbb{E}\bigg[\Big( \sup_{\substack{\Vert x-X\Vert \leq \epsilon \\ \Vert y-\tilde{X}\Vert \leq \epsilon }} \Vert h(x,y)-h(X,\tilde{X})\Vert_{{H}} \Big)^2  \bigg] \leq L\epsilon \]
\end{deff}
Finally, we will need Hoeffding's decomposition of the kernel to be able to define the limit distribution:
\begin{deff}[Hoeffding's decomposition]
Let $h:H\times H \rightarrow {H}$ be an antisymmetric kernel. Let $X,\tilde{X}$ be two i.i.d. random variables with the same distribution as $X_1$. Hoeffding's decomposition of $h$ is defined as
\[ h(x,y)= h_1(x)-h_1(y)+h_2(x,y) \, \forall x,y \in H  \]
where \[h_1(x) = \E[h(x,\tilde{X})]  \]
\[h_2(x,y)= h(x,y) - \E[h(x,\tilde{X})] - \E[h(X,y)] = h(x,y) -h_1(x)+h_1(y)  \]
\end{deff}

Now we can state our first theorem on the asymptotic distribution of our test statistic under the null hypothesis (stationarity of the time series):
\begin{thm}\label{theo1}
Let $(X_n)_{n\in\mathbb{Z}}$ be stationary and P-NED on an absolutely regular sequence $(\zeta_n)_{n\in\mathbb{Z}}$ such that $a_k \Phi(k^{-8\frac{\delta+3}{\delta}})= \mathcal{O}(k^{-8\frac{(\delta+3)(\delta+2)}{\delta^2}})$ and $\sum_{k=1}^\infty k^2 \beta_k^{\frac{\delta}{4+\delta}} < \infty$ for some $\delta>0$. Assume that $h:H^2\rightarrow H$ is an antisymmetric kernel that fulfills the variation condition and is either bounded or has uniform $(4+\delta)$-moments under approximation.
Then it holds that 
\[ \max_{1\leq k<n} \frac{1}{n^{3/2}} \Big\| \sum_{i=1}^k\sum_{j=k+1}^n h(X_i,X_j) \Big\| \xrightarrow{\mathcal{D}} \sup_{\lambda\in[0,1]} \Vert W(\lambda)-\lambda W(1) \Vert   \]
where $W$ is an $H$-valued Brownian motion and the covariance operator $S$ of $W(1)$ is given by
\begin{equation*}
\langle S(x),y\rangle =\sum_{i=-\infty}^\infty \operatorname{Cov}\left(\langle h_1(X_0),x\rangle,\langle h_1(X_i),y\rangle\right).
\end{equation*}
\end{thm}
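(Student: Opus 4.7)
The strategy rests on Hoeffding's decomposition. Since the antisymmetric kernel decomposes as $h(x,y)=h_1(x)-h_1(y)+h_2(x,y)$, summing over pairs with $i\leq k<j$ yields
\begin{equation*}
U_{n,k} \;=\; n S_k - k S_n + R_{n,k},
\end{equation*}
with $S_k := \sum_{i=1}^k h_1(X_i)$ and $R_{n,k} := \sum_{i=1}^k\sum_{j=k+1}^n h_2(X_i,X_j)$. Dividing by $n^{3/2}$ isolates a CUSUM-type functional of the partial sums of $h_1$ from a degenerate $U$-process remainder:
\begin{equation*}
\frac{1}{n^{3/2}} U_{n,k} \;=\; \frac{1}{\sqrt{n}}\Big(S_k - \tfrac{k}{n} S_n\Big) + \frac{1}{n^{3/2}} R_{n,k}.
\end{equation*}
The plan is to prove a functional invariance principle for the first (linear) term and to show that the supremum over $k$ of the second (degenerate) term is asymptotically negligible.

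For the linear part I would first transfer the dependence structure from $(X_n)$ to $(h_1(X_n))_{n\in\Z}$. Writing $X_0^{(k)}:=f_k(\zeta_{-k},\ldots,\zeta_k)$, Jensen's inequality applied to $h_1(x)=\E[h(x,\tilde X)]$ reduces the $L_2$-approximation error of $h_1$ to one for $h(\cdot,\tilde X)$; splitting on $\{\|X_0-X_0^{(k)}\|\leq\epsilon\}$ and using the variation condition on that event together with the uniform $(4+\delta)$-moments on its complement shows that $(h_1(X_n))_{n\in\Z}$ is $L_2$-NED on $(\zeta_n)_{n\in\Z}$ with a polynomial rate controlled by $a_k\Phi(\cdot)$ and the moment hypothesis. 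Combined with the absolute regularity summability $\sum_k k^2\beta_k^{\delta/(4+\delta)}<\infty$, a Hilbert-space-valued invariance principle for NED sequences then gives
\begin{equation*}
\Big(\tfrac{1}{\sqrt{n}} S_{\lfloor n\lambda\rfloor}\Big)_{\lambda\in[0,1]} \;\xrightarrow{\mathcal{D}}\; W
\end{equation*}
in the Skorokhod space $D([0,1],H)$, with $W$ an $H$-Brownian motion whose covariance operator matches the $S$ of the theorem (identified by computing $\Var\langle S_n/\sqrt n, x\rangle\to\langle S(x),x\rangle$ and polarizing). Continuous mapping applied to $f\mapsto\sup_{\lambda\in[0,1]}\|f(\lambda)-\lambda f(1)\|$ then delivers $\sup_\lambda\|W(\lambda)-\lambda W(1)\|$ as the limit of the first term.

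The main obstacle is showing
\begin{equation*}
\sup_{1\leq k<n} \frac{1}{n^{3/2}}\|R_{n,k}\| \;=\; o_P(1).
\end{equation*}
The plan is to establish a uniform pointwise moment bound $\E\|R_{n,k}\|^2\leq Cn^2$. Expanding the squared norm yields a quadruple sum of inner products $\E\langle h_2(X_i,X_j),h_2(X_{i'},X_{j'})\rangle$; under the degeneracy $\E[h_2(x,X_1)]=\E[h_2(X_1,y)]=0$ essentially all such terms vanish in the independent case, so under our hypotheses I would approximate each $X_n$ by $f_m(\zeta_{n-m},\ldots,\zeta_{n+m})$ (controlling the error via the variation condition and uniform $(4+\delta)$-moments, in the spirit of the degenerate two-sample estimates of \cite{DFGW13}) and couple the resulting finite blocks of $\zeta$-variables to independent copies at cost $\beta_m$. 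The stipulated rates on $a_k\Phi(\cdot)$ and $\beta_k$ are calibrated so that these bounds are summable in all four indices and produce $\mathcal{O}(n^2)$ in total. The pointwise bound is lifted to a bound on the supremum over $k$ by a chaining argument along a dyadic grid, applied to the increments $R_{n,k'}-R_{n,k}$ (themselves $U$-statistics of the same degenerate type), yielding $\sup_k\|R_{n,k}\|/n^{3/2}=\mathcal{O}_P(n^{-1/2})$. Slutsky's theorem then combines the two pieces and finishes the proof.
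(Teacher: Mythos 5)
Your proposal follows essentially the same route as the paper's proof: Hoeffding's decomposition into a linear and a degenerate part, transfer of the near-epoch-dependence to $(h_1(X_n))_{n\in\Z}$ via Jensen's inequality, the variation condition and the uniform moment assumption, an invariance principle for Hilbert-space-valued NED sequences plus the continuous mapping theorem for the linear part, and coupling with independent copies of the underlying absolutely regular sequence combined with a dyadic maximal inequality for the degenerate remainder. The only quantitative difference is that you claim the pointwise bound $\E\|R_{n,k}\|^2\leq Cn^2$ and the rate $\mathcal{O}_P(n^{-1/2})$ for the supremum, whereas the paper only establishes (and only needs) a bound of order $n^{5/2}$ up to logarithmic factors for the second moment of the maximum, which still yields the required negligibility of $\sup_k\|R_{n,k}\|/n^{3/2}$.
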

For the kernel $h(x,y)=x-y$, we obtain as a special case a limit theorem for the functional CUSUM-statistic similar to Corollary 1 of \cite{STW16} (although our assumptions on near epoch dependence are stronger). In the next section, we will compare the Wilcoxon-type statistic and the CUSUM-statistic with a simulation study. The proofs of the results can be found in Section \ref{Proof Main}. The next theorem will show that the test statistic converges to infinity in probability under some alternatives, so a test based on this statistic consistently detects these type of changes.

For this, we consider the following model: We have a stationary, $H\otimes H$-valued sequence $(X_n, Z_n)_{n\in\Z}$ and we observe $Y_1,...,Y_n$ with
\begin{equation*}
Y_i=\begin{cases}X_i \  \ &\text{for}\ i\leq \lfloor n\lambda^\star\rfloor = k^\star\\Z_i \  \ &\text{for}\ i> \lfloor n\lambda^\star \rfloor = k^\star\end{cases},
\end{equation*}
so $\lambda^\star\in(0,1)$ is the proportion of observations after which the change happens. If the distribution of $X_i$ and $Z_i$ is not the same, then the alternative hypothesis holds: $X_1\stackrel{\mathcal{D}}{=}...\stackrel{\mathcal{D}}{=}X_{k^\star}$ and $X_{k^\star+1}\stackrel{\mathcal{D}}{=}...\stackrel{\mathcal{D}}{=}X_{n}$, but $X_1\stackrel{\mathcal{D}}{\neq}X_n$. A simple example might be $Z_i=X_i+\mu$, where $\mu\in H$ and $\mu\neq 0$. However, let us point out that not all changes in distribution can be consistently detected. The change is detectable, if $E[h(X_1,\tilde{Z}_1)]\neq 0$ for an independent copy $\tilde{Z}_1$ of ${Z}_1$. For example, with the kernel $h(x,y)=x-y$ and $Z_i=X_i+\mu$ with $\mu\neq 0$, the change is always detectable.

\begin{thm}\label{theo2}Let $(X_n, Z_n)_{n\in\Z}$ be P-NED on an absolutely regular sequence $(\zeta_n)_{n\in\mathbb{Z}}$ such that $a_k \Phi(k^{-8\frac{\delta+3}{\delta}})= \mathcal{O}(k^{-8\frac{(\delta+3)(\delta+2)}{\delta^2}})$ and $\sum_{k=1}^\infty k^2 \beta_k^{\frac{\delta}{4+\delta}} < \infty$ for some $\delta>0$. Assume that $h:H^2\rightarrow H$ is an antisymmetric kernel that fulfills the variation condition and is either bounded or has uniform $(4+\delta)$-moments under approximation for both processes $(X_n)_{n\in\Z}$ and $(Z_n)_{n\in\Z}$, that $E[\Vert h(X_1,\tilde{Z}_1)\Vert^{4+\delta}]<\infty$, and that  $E[h(X_1,\tilde{Z}_1)]\neq 0$, were $\tilde{Z}_1$ is an independent copy of ${Z}_1$. Then 
\[ \max_{1\leq k<n} \frac{1}{n^{3/2}} \Big\|\sum_{i=1}^k\sum_{j=k+1}^n h(Y_i,Y_j) \Big\| \xrightarrow{\mathcal{P}} \infty . \]
\end{thm}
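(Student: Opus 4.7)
The plan is to lower-bound the maximum by its value at $k = k^\star := \lfloor n\lambda^\star\rfloor$, at which the cross sum becomes $\sum_{i=1}^{k^\star}\sum_{j=k^\star+1}^n h(X_i,Z_j)$, and to show that this quantity divided by $n^{3/2}$ diverges in probability.

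Next I would apply the two-sample Hoeffding decomposition relative to the laws of $X_1$ and $Z_1$. Let $\tilde X_1,\tilde Z_1$ be independent copies of $X_1,Z_1$, set $\theta:=E[h(X_1,\tilde Z_1)]$, $g_1(x):=E[h(x,\tilde Z_1)]-\theta$, $g_2(z):=E[h(\tilde X_1,z)]-\theta$, and $h_2(x,z):=h(x,z)-g_1(x)-g_2(z)-\theta$; by construction $E[g_1(X_1)]=E[g_2(Z_1)]=0$ and the canonicality $E[h_2(x,\tilde Z_1)]=E[h_2(\tilde X_1,z)]=0$ holds. Then
\begin{align*}
\sum_{i=1}^{k^\star}\sum_{j=k^\star+1}^n h(X_i,Z_j)
={}&k^\star(n-k^\star)\theta+(n-k^\star)\sum_{i=1}^{k^\star}g_1(X_i)\\
&{}+k^\star\sum_{j=k^\star+1}^n g_2(Z_j)+\sum_{i=1}^{k^\star}\sum_{j=k^\star+1}^n h_2(X_i,Z_j).
\end{align*}
The deterministic leading term has norm $k^\star(n-k^\star)\|\theta\|/n^{3/2}\sim\sqrt{n}\,\lambda^\star(1-\lambda^\star)\|\theta\|\to\infty$, because $\theta\neq 0$ by assumption.

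It remains to show that the other three terms are $o_P(\sqrt{n}\,)$ after dividing by $n^{3/2}$. For the linear terms, Jensen's inequality applied to the conditional expectations defining $g_1,g_2$ yields finite $(4+\delta)$-th moments from the hypothesis $E\|h(X_1,\tilde Z_1)\|^{4+\delta}<\infty$. Combined with the summability $\sum k^2\beta_k^{\delta/(4+\delta)}<\infty$ and the $P$-NED of $(X_n,Z_n)$ on $(\zeta_n)$, a standard coupling argument (as used in the proof of Theorem~\ref{theo1}) makes the covariances absolutely summable and gives $E\|\sum_{i=1}^{k^\star}g_1(X_i)\|^2=O(n)$, analogously for $g_2$. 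Both linear contributions are therefore $O_P(n^{3/2})$ and contribute only $O_P(1)$ after normalization.

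The hard part is the canonical term $\sum_{i,j}h_2(X_i,Z_j)$, since $(X_n,Z_n)$ is only jointly $P$-NED. I would bound its second moment by combining the canonicality of $h_2$ with the coupling/blocking scheme from Theorem~\ref{theo1}: for index pairs $(i,j)$ and $(i',j')$ sufficiently separated, replace each of $(X_i,Z_j)$, $(X_{i'},Z_{j'})$ by a block function of $(\zeta_m)$, control the resulting error via the variation condition, and use absolute regularity to decorrelate. Under independence, canonicality alone forces $E\|\sum_{i,j}h_2(X_i,Z_j)\|^2=O(n^2)$; under the present dependence the bound degrades to at most $O(n^{2+\epsilon})$ for some $\epsilon<1$, which is already more than enough because we only need $o_P(n^2)$ at the unnormalized scale, i.e.\ $o_P(\sqrt{n}\,)$ after dividing by $n^{3/2}$. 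Putting everything together via the reverse triangle inequality,
\begin{equation*}
\frac{1}{n^{3/2}}\Big\|\sum_{i=1}^{k^\star}\sum_{j=k^\star+1}^n h(X_i,Z_j)\Big\|\geq\sqrt{n}\,\lambda^\star(1-\lambda^\star)\|\theta\|-O_P(1)\xrightarrow{\mathcal{P}}\infty,
\end{equation*}
which proves the theorem.
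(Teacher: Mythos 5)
Your proposal is correct and follows essentially the same route as the paper: lower-bound the maximum at $k=k^\star$, apply the two-sample Hoeffding decomposition so that the deterministic drift $k^\star(n-k^\star)\theta/n^{3/2}\sim\sqrt{n}\,\lambda^\star(1-\lambda^\star)\|\theta\|$ dominates, and show the linear and canonical remainders are negligible (the paper's Lemma~\ref{A_lin}, Corollary~\ref{A_cor} and Lemma~\ref{A_deg} do exactly this, with the constant $\theta$ absorbed into $h_1^\star$ and $h_1$ rather than pulled out). The only cosmetic difference is that you control the linear parts by a second-moment/Chebyshev bound where the paper invokes the full invariance principle, and you observe that only $o_P(n^2)$ is needed for the degenerate part where the paper reuses its null-hypothesis machinery to get the stronger $o(n^{3/2})$ almost surely.
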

These results on the asymptotic distribution can not be applied directly in many practical applications, because the covariance operator is unknown. For this reason, we introduce the dependent wild bootstrap for non-degenerate $U$-statistics: Let $(\varepsilon_{i,n})_{i\leq n, n\in\N}$ be a rowwise stationary triangular scheme of $N(0,1)$-distributed variables (we often drop the second index for notational convenience: $\varepsilon_{i}=\varepsilon_{i,n}$). The bootstrap version of our $U$-statistic is then
\begin{equation*}
U_{n,k}^\star=\sum_{i=1}^k\sum_{j=k+1}^nh(X_i,X_j)(\varepsilon_i+\varepsilon_j).
\end{equation*}
\begin{thm}\label{theo3}Let the assumptions of Theorem \ref{theo1} hold for $(X_n)_{n\in\Z}$ and $h:H^2\rightarrow H$. Assume that  $(\varepsilon_{i,n})_{i\leq n, n\in\N}$ is independent of $(X_n)_{n\in\Z}$, has standard normal marginal distribution and $\operatorname{Cov}(\varepsilon_i,\varepsilon_j)=w(|i-j|/q_n)$, where $w$ is symmetric and continuous  with $w(0)=1$ and $\int_{-\infty}^\infty |w(t)|dt<\infty$. Assume that $q_n\rightarrow\infty$ and $q_n/n\rightarrow0$. Then it holds that 
\begin{multline*}\left(\max_{1\leq k<n} \frac{1}{n^{3/2}}\Big\| U_{n,k}\Big\|, \max_{1\leq k<n} \frac{1}{n^{3/2}}\Big\| U_{n,k}^\star\Big\|\right)\\
  \xrightarrow{\mathcal{D}} \bigg(\sup_{\lambda\in[0,1]} \Vert W(\lambda)-\lambda W(1) \Vert,\sup_{\lambda\in[0,1]} \Vert W^\star(\lambda)-\lambda W^\star(1) \Vert\bigg)   
\end{multline*}
where  $W$ and $W^\star$ are two independent, $H$-valued Brownian motions with covariance operator as in Theorem \ref{theo1}.
\end{thm}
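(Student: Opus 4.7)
The plan is to prove the joint convergence by Hoeffding's decomposition, isolate the dominant linear part of the bootstrap $U$-statistic, and reduce to a joint functional central limit theorem for the original partial sums and the multiplier-weighted partial sums, with the two limits being asymptotically independent thanks to the scale separation $q_n/n\to 0$.

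First I would apply Hoeffding's decomposition $h(x,y)=h_1(x)-h_1(y)+h_2(x,y)$ to split $U_{n,k}^\star$ into a linear and a degenerate part. A short manipulation rewrites the linear part as
\[ \sum_{i=1}^k\sum_{j=k+1}^n\bigl(h_1(X_i)-h_1(X_j)\bigr)(\varepsilon_i+\varepsilon_j) = n T_k-k T_n + S_k^{h_1}S_n^\varepsilon - S_n^{h_1}S_k^\varepsilon, \]
where $T_k=\sum_{i=1}^k h_1(X_i)\varepsilon_i$, $S_k^{h_1}=\sum_{i=1}^k h_1(X_i)$ and $S_k^\varepsilon=\sum_{i=1}^k\varepsilon_i$. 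Since $\operatorname{Var}(S_n^\varepsilon)$ is of order $nq_n$ while $\|S_n^{h_1}\|=O_P(\sqrt n)$ by the partial-sum invariance principle underlying Theorem \ref{theo1}, the two cross terms contribute at most $O_P(\sqrt{q_n/n})=o_P(1)$ uniformly in $k$ after division by $n^{3/2}$. For the degenerate contribution $\sum h_2(X_i,X_j)(\varepsilon_i+\varepsilon_j)$ I would bound its second moment by a quadruple sum, exploit the degeneracy $E[h_2(X_i,X_j)\mid X_i]=0=E[h_2(X_i,X_j)\mid X_j]$ together with the variation condition and the NED/mixing assumptions to extract summable covariance decay in the index differences, and pick up only a factor of order $q_n$ from $\sum_\ell |w(\ell/q_n)|\lesssim q_n$ for the multiplier covariance; overall this is $o(n^3)$, hence $o_P(1)$ after division by $n^{3/2}$, and a corresponding fourth-moment bound delivers tightness in $k$.

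It then remains to establish the joint convergence $(S^{h_1}_{\lfloor n\cdot\rfloor}/\sqrt n, T_{\lfloor n\cdot\rfloor}/\sqrt n)\xrightarrow{\mathcal D}(W,W^\star)$, with $W$ and $W^\star$ independent $H$-valued Brownian motions of covariance $S$, in sup-norm on $D([0,1],H\times H)$. The first coordinate is covered by the invariance principle from the proof of Theorem \ref{theo1}. For the second, I would argue conditionally on $(X_n)_{n\in\Z}$: the $\varepsilon_i$ being jointly Gaussian, $T_{\lfloor n\cdot\rfloor}/\sqrt n$ is a centered Gaussian process with conditional covariance
\[ \frac{1}{n}\sum_{i\leq\lfloor n\lambda\rfloor}\sum_{j\leq\lfloor n\mu\rfloor}\langle h_1(X_i),x\rangle\langle h_1(X_j),y\rangle\, w\!\left(\frac{|i-j|}{q_n}\right), \]
which is a lag-window estimator of $\min(\lambda,\mu)\langle Sx,y\rangle$ and converges in probability to that limit, under $q_n\to\infty$, $q_n/n\to0$ and the weak-dependence structure of $(h_1(X_i))_i$. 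Combined with a Gaussian fourth-moment bound for conditional tightness, this shows that the conditional law of $T_{\lfloor n\cdot\rfloor}/\sqrt n$ converges in probability to the (deterministic) law of $W^\star$; unconditionally, the joint law of $(S^{h_1}_{\lfloor n\cdot\rfloor}/\sqrt n,T_{\lfloor n\cdot\rfloor}/\sqrt n)$ then converges to the product of the laws of $W$ and $W^\star$, which delivers the asserted independence. The continuous mapping theorem applied to $f\mapsto\sup_\lambda\|f(\lambda)-\lambda f(1)\|$ concludes.

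The main obstacle I anticipate is the consistency of the lag-window long-run covariance estimator in the infinite-dimensional setting: the pointwise-in-$(x,y)$ convergence of the conditional covariance has to be upgraded to an operator-level statement strong enough to identify the limiting $H$-valued Gaussian law of $W^\star$, and this is where the $P$-NED control $a_k\Phi(\cdot)$ and the summability $\sum k^2\beta_k^{\delta/(4+\delta)}<\infty$ assumed in the hypotheses enter in a non-trivial way. Controlling the degenerate bootstrap part in sup-norm is likewise delicate because of the coarse $q_n$-scale correlation of the multipliers, but the estimates should mirror those already developed for the degenerate $U$-statistic in the proof of Theorem \ref{theo1}.
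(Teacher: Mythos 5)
Your proposal follows essentially the same route as the paper's proof: Hoeffding decomposition, the identity $nT_k-kT_n+S_k^{h_1}S_n^\varepsilon-S_n^{h_1}S_k^\varepsilon$ for the linear part with the two cross terms killed by $\operatorname{Var}(S_n^\varepsilon)=O(nq_n)$, reduction of the degenerate multiplier part to the estimates already developed for Theorem \ref{theo1}, and a conditional Gaussian argument in which the lag-window covariance estimator converges in probability to $\min(\lambda,\mu)\langle Sx,y\rangle$, yielding the joint limit with independent $W$ and $W^\star$. The one concern you flag as the main obstacle, operator-level consistency of the long-run covariance estimator, is resolved in the paper more cheaply than you anticipate, via the Cram\'er--Wold device over a countable dense subset of $H$ together with a subsequence argument, so only scalar consistency in fixed directions is needed.
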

From this statement, it follows that the bootstrap is consistent and it can be evaluated using the Monte Carlo method. If you generate several copies of the bootstraped test statistic independent conditional on $X_1,..,X_n$, the empirical quantiles of the bootstraped test statistics can be used as critical values for the test. For a deeper discussion on bootstrap validity, see \cite{bucher2019note}. Of course, in practical applications, the function $w$ and the bandwidth $q_n$ have to be chosen. We will apply a method by \cite{RS17} for the bandwidth selection.

Instead of using multipliers with a standard normal distribution, one might also choose other distributions for $(\varepsilon_{i,n})_{i\leq n, n\in\N}$. This is done for the traditional wild bootstrap to capture skewness. Under the hypothesis, the distribution of $h(X_i,X_j)$ is close to symmetric for $i$ and $j$ far apart, so we do not expect a large improvement by non-Gaussian multipliers and limit our analysis in this paper to the case of Gaussian multipliers.

\section{Data Example and Simulation Results} \label{Simulations}

\subsection*{Bootstrap procedure}
%For observations $X_1,...,X_n \in H$, we want to test 
%\[ H_0: \mu_1 = ....= \mu_n \;\;\;\text{vs.} \;\;\; H_1: \mu_1=...=\mu_{k^*} \neq \mu_{k^*+1}=...=\mu_n  \;\;\; \text{for some $k^*<n$}  \]
%where $\mu_i = \E[X_i]$.\\

Since no theoretical values of the limit distribution of our test-statistic exist, we perform a bootstrap to find critical values for a test-decision. The procedure to find the critical value for significance level $\alpha \in (0,1)$ is the following:
\begin{itemize}
\item Calculate $h(X_i,X_j)$ for all $i<j$
\item For each of the bootstrap iterations $t=1,...,m$:
\begin{itemize}
\item Calculate $h(X_i,X_j)(\varepsilon_i^{(t)}+\varepsilon^{(t)}_j)$, where $(\varepsilon^{(t)}_i)_{i<n}$ are random multiplier
\item Calculate $U_{n,k}^{(t)}=\sum_{i=1}^k\sum_{j=k+1}^n h(X_i,X_j)(\varepsilon^{(t)}_i+\varepsilon^{(t)}_j)$ for all $k < n $
\item Find $\max\limits_{1 \leq k<n} \Vert U_{n,k}^{(t)}\Vert$ 
\end{itemize}
\item Identify the empirical $\alpha$-quantile $U_\alpha$ of all $\max\limits_{1 \leq k<n} \Vert U_{n,k}^{(1)} \Vert,...,\max\limits_{1 \leq k<n} \Vert U_{n,k}^{(m)} \Vert$
\item Calculate $U_{n,k}=\sum_{i=1}^k\sum_{j=k+1}^n h(X_i,X_j)$ for all $1\leq k<n$
\item Test decision: If $\max\limits_{1\leq k<n} \Vert U_{n,k} \Vert > U_{\alpha}$, reject the null hypothesis 
\end{itemize}  
To ensure a certain covariance structure within the multiplier (that fulfills the assumptions of the multiplier theorem), we calculate them as 
\[(\varepsilon^{(t)}_i)_{i\leq n}= A (\eta_i)_{i\leq n}   \]
where $\eta_1,...,\eta_i$ are i.i.d. $N(0,1)$-distributed and $A$ is the square root of the quadratic spectral covariance matrix constructed with bandwidth-parameter $q$ (chosen with the method by \cite{RS17} described below). That means $AA^t=B$, where $B$ has the entries 
\[ B_{i,j} = v_{\vert i-j \vert} \;\;\;\;\;\; \forall \, 1 \leq i,j \leq n    \]
with 
\begin{align*}
& v_0 = 1 \\
& v_i = \frac{25}{12 \pi^2 (i-1)^2/q^2} \left( \frac{\sin(\frac{6\pi(i-1)/q}{5})}{\frac{6\pi(i-1)/q}{5}} -\cos(\frac{6\pi(i-1)/q}{5})\right) \;\;\;\;\;\; \forall \, 1 \leq i \leq n-1.
\end{align*}

\subsection*{Bandwidth}
We use a data adapted bandwidth parameter $q_{adpt}$ in the bootstrap which is evaluated for each simulated data sample $X_1,...,X_n$ by the following procedure:
\begin{itemize}
\item Calculate $\tilde{X}_1,...,\tilde{X}_n$ where $\tilde{X}_i=\frac{1}{n-1}\sum_{j=1, j\neq i}^n h(X_i,X_j)$
\item Determine a starting value $q_0=n^{1/5}$
\item Calculate matrices $V_k = \frac{1}{n}\sum_{i=1}^{n-(k-1)} \tilde{X_i} \otimes \tilde{X_k} $ for $k=1,..., q_0$,  where $\otimes$ is the outer product
\item Compute $CP_0=V_1+2\sum_{k=1}^{q_0-1}w(k, q_0)V_{k+1}$ \\
 \hspace*{20pt}   and $CP_1= 2\sum_{k=1}^{q_0-1}k \,w(k, q_0)V_{k+1}$ \\
 $w$ is a kernel function, we use the quadratic spectral kernel \\
 $w(k,q)= \frac{25}{12 \pi^2 k^2/q^2} \left( \frac{\sin(\frac{6\pi k/q}{5})}{\frac{6\pi k/q}{5}} -\cos(\frac{6\pi k/q}{5})\right) $
 \item Receive the data adapted bandwidth 
 \[ q_{adpt} = \left \lceil \left( \frac{3n\sum_{i=1}^d \sum_{j=1}^d {CP_1}_{i,j}}{\sum_{i=1}^d\sum_{j=1}^d{CP_0}_{i,j}+ \sum_{j=1}^d {CP_0}_{j,j}^2 }  \right)^{1/5}    \right \rceil    \]
\end{itemize}

For theoretical details about the data adapted bandwidth we refer to \cite{RS17}.

\subsection*{Data example}
We look at data of $344$ monitoring stations of the 'Umweltbundesamt' for air pollutants located all over Germany
(Source: Umweltbundesamt, \url{https://www.umweltbundesamt.de/daten/luft/luftdaten/stationen} Accessed on 06.08.2020). The particular data is the daily average of particulate matter with particles smaller than $10 \mu m$ ($PM_{10}$) measured in $\mu g / m^3$ from January 1, 2020 to May 31, 2020. This means we have $n=152$ observations and treat the measurements of all stations on one day as a data from $\mathbb{R}^{344}$.

Since the official restrictions of the German Government in course of the COVID-19 pandemic came into force on March 22, 2020, an often asked question was whether these restrictions (social distancing, closed gastronomy, closed/reduced work or work from home) had an effect on the air quality in Germany. This question comes from the assumption that the restrictions lead to reduced traffic, resulting in reduced amount of particulate matter.

There are several publications from various countries studying the effects of lockdown measures on air pollution parameters like nitrogen oxides ($NO$, $NO_2$), ozone ($O_3$) and particulate matter ($PM_{10}$, $PM_{2.5}$). For example, \cite{lian2020impact} investigated data from the city of Wuhan, or \cite{zangari2020air} for New York City. Data for Berlin, as for 19 other Cities around the world, are investigated by \cite{fu2020impact}. They observed a decline in particular matter ($PM_{10}$ and $PM_{2.5}$, only significant for $PM_{2.5}$) in the period of lockdown. But the observed time period is rather short (one month - Mar. 17 to Apr. 19, 2020) and the findings for a densely populated city may not simply be transferred to the whole of Germany. In contrast to that, we use data from measuring stations located across the whole country and over a period of five months.

Looking at the empirical p-values of the CUSUM test and the Wilcoxon-type test (based on spatial signs) resulting from $m=3000$ Bootstrap iterations in Table \ref{p-values}, we see that with CUSUM, the null hypothesis $H_0$ is never rejected for any significance level $\alpha < 0.2 $. But the Wilcoxon-type test rejects $H_0$ for significance level $\alpha$ larger than $0.03$. \\
\begin{table}[h]
\begin{center}
\begin{tabular}{ |c|c|c| } 
\hline
\multicolumn{2}{|c|}{p-values} \\
\hline
  CUSUM & Spatial Sign \\
\hline
 $0.226$ & $0.027$ \\
\hline
\end{tabular}
\vspace{3pt}
\caption{Empirical p-values for CUSUM and spatial sign test with data adapted bandwidth. $m=3000$ Bootstrap iterations were used.}
\label{p-values}
\end{center}
\end{table}
Since the data exhibits a massive outlier located at January 1 (likely due to New Year's firework), we repeated the test procedure without the data of this day. We observed that the resulting p-value for the Wilcoxon-type test changed just slightly (Table \ref{p-values_woNY}). Whereas the p-value for CUSUM decreased notably - it is now around $0.08$. In this example we see that CUSUM is clearly more influenced by the outlier in the data than the spatial signs based test. Evaluation showed that the data adapted bandwidth was set to $q_{adpt}=3$ for both the CUSUM test and the Wilcoxon-type test for both scenarios.\\ 
\begin{table}[h]
\begin{center}
\begin{tabular}{ |c|c|c| } 
\hline
\multicolumn{2}{|c|}{p-values (data excluding Jan. 1)} \\
\hline
 CUSUM & Spatial Sign \\
\hline
 $0.078$ & $0.030$ \\
\hline
\end{tabular}
\vspace{3pt}
\caption{Empirical p-values for CUSUM and spatial sign test with data adapted bandwidth for data excluding January 1, 2020. $m=3000$ Bootstrap iterations were used.}
\label{p-values_woNY}
\end{center}
\end{table}
\begin{center}
\begin{figure}[t]
\includegraphics[width=\textwidth]{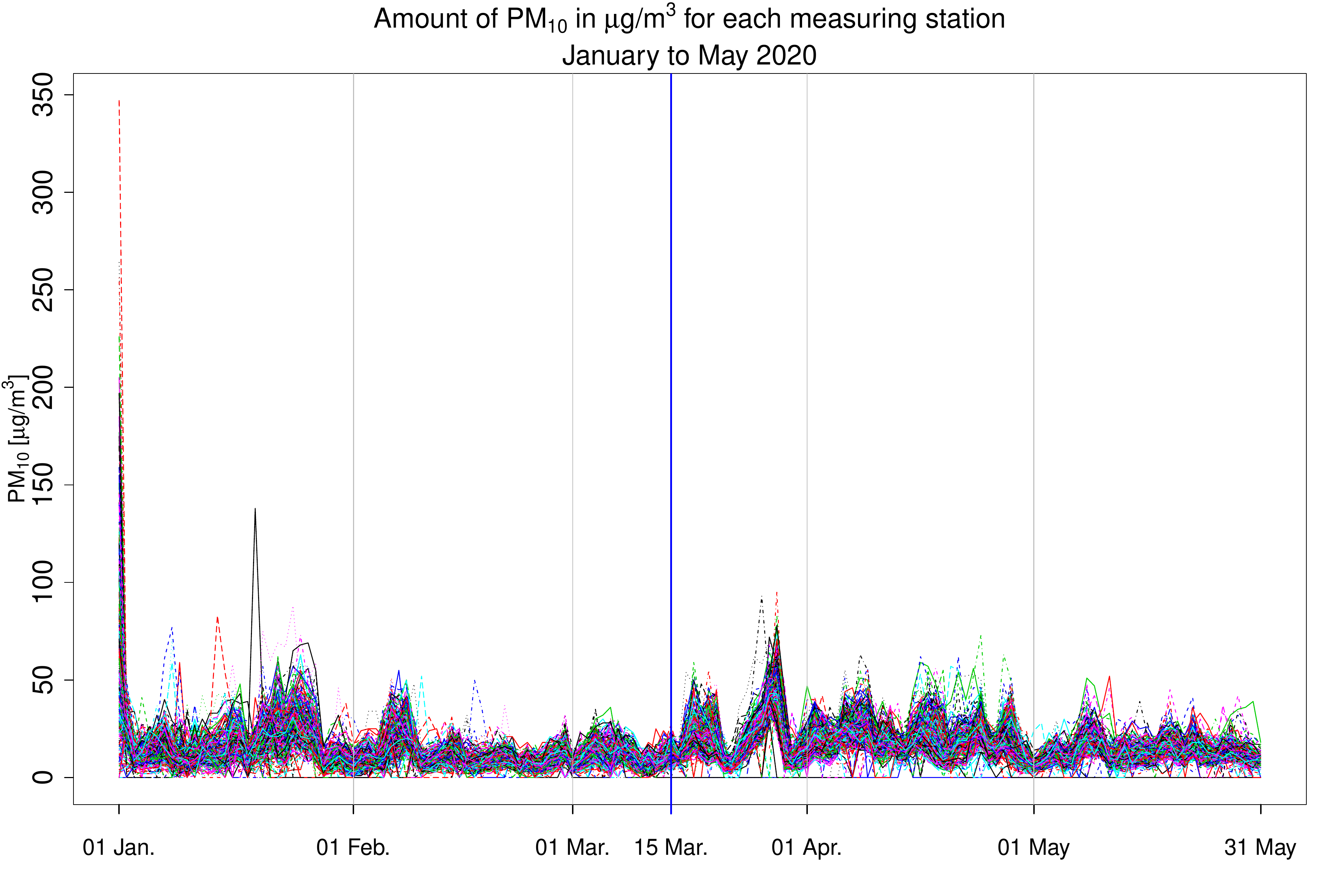} 
\caption{Daily average of $PM_{10}$ in $\mu g / m^3$ for $344$ monitoring stations from January 1, 2020 to May 31, 2020. Each line corresponds to one station. The blue vertical line is the estimated change-point location. The massive outlier at January 1 could result from New Year's fireworks.}
\label{Feinstaub}
\end{figure}
\end{center}
A natural approach to estimate the location $\hat{k}$ of the change-point, is to determine the smallest $1 \leq k<n$ for which the test statistic attains its maximum:
\[ \hat{k} = \min\{k: \Vert \frac{1}{n^{3/2}} U_{n,k} \Vert = \max_{1\leq j< n} \Vert \frac{1}{n^{3/2}} U_{n,j}  \Vert    \}   \] 
The maximum of the spatial sign test statistic, which marks our estimated change point, is received at March 15, 2020. (The maximum of the CUSUM statistic is indeed located at the same point.) The estimated change-point in our example lies a week before the official restrictions regarding COVID-19 were imposed. One could argue that the citizen, being aware of the situation, changed their behaviour beforehand, without strict official restrictions. Data projects using mobile phone data (e.g Covid-19 Mobility Project and Destatis) indeed show a decline in mobility preceding the official restrictions on March 22 by around a week. (see \url{https://www.covid-19-mobility.org/de/data-info/}, \url{https://www.destatis.de/DE/Service/EXDAT/Datensaetze/mobilitaetsindikatoren-mobilfunkdaten.html})

But if we look at our data (Fig. \ref{Feinstaub}), one gets the impression that a change in mean would rather be upwards than downwards, meaning that the daily average pollution increased after March 15, 2020 compared to the beginning of the year. Indeed, after averaging over the 344 monitoring stations and applying the two-sample Hodges-Lehmann estimator to the resulting one-dimensional time series, we estimate the average increase to be 3.8 $\mu g/m^3$. However, our test does not reject the null hypothesis when applied to this one-dimensional time series.

Similar findings about in increase in $PM_{10}$ were made by \cite{ropkins2021early}. They studied the impact of the COVID-19 lockdown on air quality across the UK. While using long-term data (Jan. 2015 to Jun. 2020) from Rural Background, Urban Background and Urban Traffic stations, they observed an increase for $PM_{10}$ and $PM_{2.5}$ while locking down. Noting that this trend is "highly inconsistent with an air quality response to the lockdown", they discussed the possibility that the lockdown did not greatly limit the largest impacts on particulate matter. We assume that the findings are to some extend comparable to Germany due to the similar geographic and demographic characteristics of the countries.

Furthermore, the German 'Umweltbundesamt' states that traffic is not the main contributor to $PM_{10}$ in Germany (anymore) and other sources of particulate matter (e.g. fertilization, Saharan dust, soil erosion, fires) can overlay effects of reduced traffic (source: \url{https://www.umweltbundesamt.de/faq-auswirkungen-der-corona-krise-auf-die#welche-auswirkungen-hat-die-corona-krise-auf-die-feinstaub-pm10-belastung}).
It is known that one mayor meteorological effect on particulate matter is precipitation, since it washes the dust out of the air (scavenging). Comparing the data with the meteorological recordings (Fig. \ref{Niederschlag}) another explanation for the change-point gets visible: While January was relatively warm with few precipitation, February and first half of March had much of it. Beginning in the middle of March, a relatively drought period started and lasted through April and May. (Data extracted from DWD Climate Data Center (CDC): Daily station observations precipitation height in mm, v19.3, 02.09.2020.  \url{https://cdc.dwd.de/portal/202107291811/mapview})

\begin{center}
\begin{figure}[h]
\includegraphics[width=\textwidth]{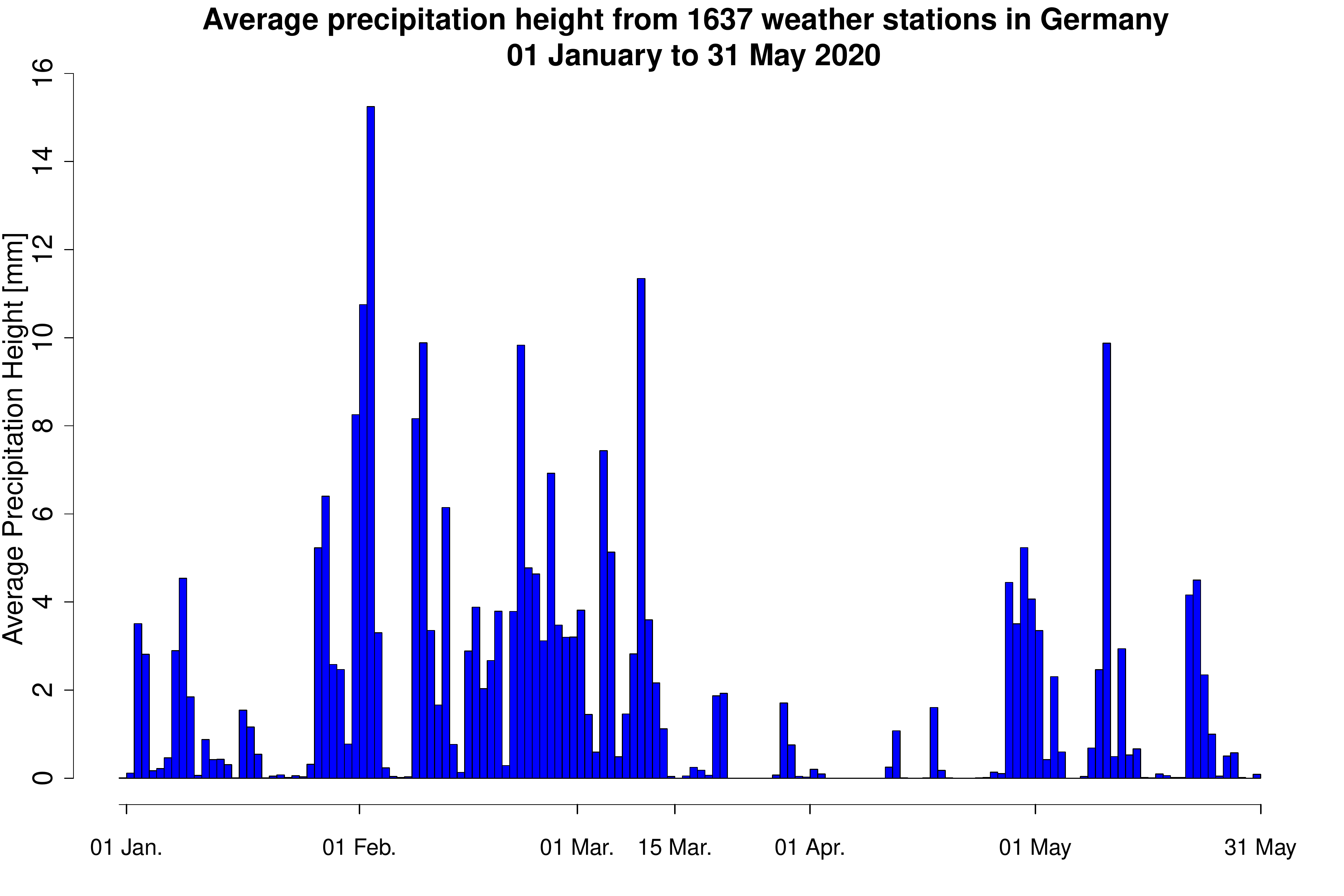} 
\caption{Daily rainfall (precipitation) in $mm$ in Germany averaged over $1637$ weather stations.}
\label{Niederschlag}
\end{figure}
\end{center}

Comparing this findings with Figure \ref{Feinstaub}, we can see that it fits the data quite well. Especially in February and the first half of March, with higher quantity of precipitation, we have relatively low quantity of $PM_{10}$. Beginning with the drought weather, the concentration of $PM_{10}$ goes up and especially the bottom-peaks are now higher than before, meaning that days with a concentration of $PM_{10}$ as low as in the beginning of the year are clearly more rare. 

We like to note that this findings do not contradict the satellite data published by ESA (e.g. \url{https://www.esa.int/Applications/Observing_the_Earth/Copernicus/Sentinel-5P/Air_pollution_remains_low_as_Europeans_stay_at_home}) which shows a reduced air pollution over Europe in 2020 compared to 2019. While the satellites measure atmospheric pollution, the data of the 'Umweltbundesamt' is collected at stations at ground level. It is known that there is a difference between these two sorts of pollution.

\subsection*{Simulation Study}
In this section we report the results of our simulation study. We compare size and power performance of our test statistic with the well established CUSUM. To do so, we construct different data examples which are described below. Note that we can easily adapt the bootstrap and the adapted bandwidth procedure described above to CUSUM by using $h(x,y)=x-y$ instead of the spatial sign kernel function $h(x,y)=(x-y)/\|x-y\|$.

\subsection*{Generating Sample}
We use a functional AR(1)-process on $[0,1]$, where the innovations are standard Brownian motions. We use an approximation on a finite grid with $d$ grid points, if not indicated otherwise. To be more precise, we simulate data as follows: 
\begin{align*}
& X_{-BI}=(\xi_1,\xi_1+\xi_2,...,\sum_{i=1}^d \xi_i)/\sqrt{d}, \;\;\;\xi_i \text{ i.i.d. }\mathcal{N}(0,1)\text{-distributed}\\
& X_t = a\, \Phi X_{t-1}^{\text{T}} + W_t \;\;\; \forall \; {-BI} <t \leq n \\
& \text{where } \Phi \in \mathbb{R}^{d\times d} \text{ with entries } \Phi_{i,j}=\begin{cases} i/d^2 & i\leq j \\ j/d^2 & i>j \end{cases} = \min(i,j)/d^2 \\
& \text{and } W_t = (\xi^{(t)}_1,\xi^{(t)}_1+\xi^{(t)}_2,...,\sum_{i=1}^d \xi^{(t)}_i)/\sqrt{d}, \;\;\; \xi^{(t)}_i \text{ i.i.d. }\mathcal{N}(0,1)\text{-distributed}
\end{align*}
The scalar $a \in \mathbb{R}$ is an AR-parameter, we use $a=1$.
The first $(BI+1)$ simulations are not used. Through this simulation structure we achieve dependence within $n$ and $d$. We consider $n=200$ and $d=100$ if not shated otherwise.

\subsection*{Size}
To calculate the empirical size, data simulation and test procedure via bootstrap is repeated $S=3000$ times with $m=1000$ bootstrap repetitions. We count the number of times the null hypothesis was rejected both for the CUSUM-type and the Wilcoxon-type statistic. By using $S=3000$ simulation runs, the standard deviation of the rejection frequencies is always below 1\% and is below 0.4\% if the true rejection probability is at 5\%.

To analyse how good the test statistics performs if outliers are present or if gaussianity is not given, we study two additional simulations: 
\begin{itemize}
\item Data simulated as above, but with presence of outliers: \[ Y_i=\begin{cases} X_i \;\;\; & i \notin \{ 0.2n,0.4n, 0.6n,0.8n\} \\ 
10X_i & i \in \{ 0.2n,0.4n,0.6n,0.8n\} \end{cases} \]
\item Data simulated similar to the above, but with $\xi_i, \xi_i^{(t)} \sim t_1  \, \forall i\leq d,$ \\$-BI<t\leq n$, i.e. heavy tailed data. 
\end{itemize}

As we can see in Table \ref{T_N2}, the Wilcoxon-type test and the CUSUM test perform almost similarly under normality, both are somewhat undersized, especially for a smaller size of $n=100$, but also for $n=200$ or $n=250$. In the presence of outliers or for heavy-tailed data, the rejection frequency of the Wilcoxon-type test does not change much, see Table \ref{T_N}. In contrast, the CUSUM test is very conservative in these situations.
\begin{table}[h]
\centering
\resizebox{\columnwidth}{!}{%
\begin{tabular}{ |c|c|c|c|c|c|c|c|c| } 
\hline
\multicolumn{7}{|c|}{Empirical Size} \\
\hline
 & \multicolumn{2}{|c|}{Gaussian $n=100$} & \multicolumn{2}{|c|}{Gaussian $n=200$}  & \multicolumn{2}{|c|}{Gaussian $n=250$} \\
 \hline
$\alpha$ &   CUSUM   & Spatial Sign &    CUSUM  & Spatial Sign & CUSUM     & Spatial Sign\\
\hline
 $0.1$   & $0.052$   & $0.057 $ & $0.080$ & $0.078$  & $0.082$  & $0.079$ \\ 
 $0.05$  & $0.013$ & $0.013$ & $0.033$ & $0.032$   & $0.031$ &   $0.029$ \\
 $0.025$ & $0.003$ & $0.003$ &$0.008$   & $0.001$    & $0.010$ & $0.008$ 	\\
 $0.01$  & $0$& $0$ & $0.002$   & $0.002$   & $0.003$   & $0.002$ \\ 
\hline
\end{tabular}%
}
\vspace{3pt}
\caption{Empirical size of CUSUM and spatial sign test with Gaussian data, significance level $\alpha$ and different sample sizes $n$.}
\label{T_N2}
\end{table}

\begin{table}[h]
\centering
\resizebox{\columnwidth}{!}{%
\begin{tabular}{ |c|c|c|c|c|c|c|c|c| } 
\hline
\multicolumn{7}{|c|}{Empirical Size} \\
\hline
 & \multicolumn{2}{|c|}{Gaussian} & \multicolumn{2}{|c|}{outlier}  & \multicolumn{2}{|c|}{heavy tails} \\
 \hline
$\alpha$ &   CUSUM   & Spatial Sign &    CUSUM  & Spatial Sign & CUSUM     & Spatial Sign\\
\hline
 $0.1$   & $0.080$ & $0.078$    & $0.051$ & $0.086$     	 & $0.018$   & $0.077$    	  \\ 
 $0.05$  & $0.033$ & $0.032$    & $0.015$   & $0.035$      & $0.003$ & $0.030$   \\
 $0.025$ & $0.008$   & $0.001$    & $0.004$ & $0.012$       & $0$ & $0.010$  	\\
 $0.01$  & $0.002$   & $0.002$      & $0.001$       & $0.003$      & $0$       & $0.002$   	\\ 
\hline
\end{tabular}%
}
\vspace{3pt}
\caption{Empirical size of CUSUM and spatial sign test with significance level $\alpha$, sample size $n=200$ and different distributions.}
\label{T_N}
\end{table}

\subsection*{Power} 
To evaluate the performance of the test statistics in presence of a change in mean, we construct four scenarios. 
\setlength{\leftmargini}{2cm}
\begin{itemize}
\item[Scenario 1:] Uniform jump of $+0.3$ after $n/2$ observations:
\[ Y_i=\begin{cases} X_i \;\;\; & i<n/2 \\ X_i +0.3u & i\geq n/2  \end{cases} \]
where $u=(1,...,1)^t$.
\item[Scenario 2:] Sinus-jump after $n/2$ of observations:
\[ Y_i=\begin{cases} X_i \;\;\; & i<n/2 \\ X_i + \frac{1}{2\sqrt{2}} (\sin(\pi D/d))_{D\leq d} & i\geq n/2  \end{cases}   \]
\item[Scenario 3:] Uniform jump of $+0.3$ after $n/2$ observations in presence of outlier at $0.2n,0.4n,0.6n,0.8n$:
\[ Y_i=\begin{cases} X_i \;\;\; & i<n/2, i \notin \{ 0.2n,0.4n\} \\ 
10X_i & i \in \{ 0.2n,0.4n\} \\
X_i +0.3u & i\geq n/2, i \notin \{ 0.6n,0.8n\}  \\ 
10X_i +0.3u &  i \in \{ 0.6n,0.8n\} \end{cases}
 \]
 \item[Scenario 4:] Heavy tails - In the simulation of $(X_i)_{i \leq n}$ we use $\xi_i, \xi_i^{(t)} \sim t_1$ (Cauchy distributed) $\forall i\leq d,-BI<t\leq n$ and a uniform jump of $+5$ after $n/2$ observations 
\end{itemize}
As in the analysis under null hypothesis $H_0$, we chose $m=1000$ bootstrap repetitions. The data simulation and test procedure via bootstrap is repeated $S=3000$ times for each scenario and the number of times $H_0$ was rejected is counted to calculate the empirical power. To compare our test-statistic with CUSUM, we calculate the Wilcoxon-type test (spatial sign) and the CUSUM test simultaneously in each simulation run. 

Comparing the size-power plots for both test statistics (Figure \ref{S_P}), we see that the Wilcoxon-type test outperforms the CUSUM test in Scenarios 1 and 2. For these two scenarios with a jump after one half of the observations, Wicoxon-type test provides similar empirical size and at the same time higher empirical power. %In Scenario 1, we see that spatial signs provides empirical power larger than $0.9$ for $\alpha \in \{0.1,0.05\}$. For smaller $\alpha$ the empirical power declines but not as drastically as for CUSUM, which provides for $\alpha=0.01$ only empirical power of about $0.4$, while spatial sign still shows empirical power greater than $0.6$. In the second scenario we see a smaller empirical power for both statistics compared to the first scenario.
In the third scenario, the jump with outlier in the data, we see that the CUSUM test shows a lower empirical size than the Wilcoxon-type test. But the spatial sign based test shows clearly more empirical power. %For larger $\alpha \in \{0.1, 0.05\}$, the empirical power of spatial sign is over $0.9$ while CUSUM just provides an empirical power of about $0.6$ for $\alpha=0.1$.
In Scenario 4, we see that the CUSUM test barely provides any empirical power at all. Even for $\alpha=0.1$ CUSUM shows an empirical power $<0.04$. In heavy contrast, the Wilcoxon-type test shows relatively large empirical power, being greater than $0.9$ for $\alpha\geq 0.025$. 

For exact values of the empirical power in each scenario, see Table \ref{T_A123} in the appendix. In the appendix can also be found a short examination of the behaviour of the test statistics if the change-point lies more closely to the beginning of the observations or if $d$ is larger than $n$ (Table \ref{T_A56}). Here shall just be noted that the spatial sign based test suffers less loss in power than the CUSUM test if the change point lies closer to the edges or if $d>>n$. 

\begin{figure}[H]
\begin{center}
\includegraphics[width=\textwidth]{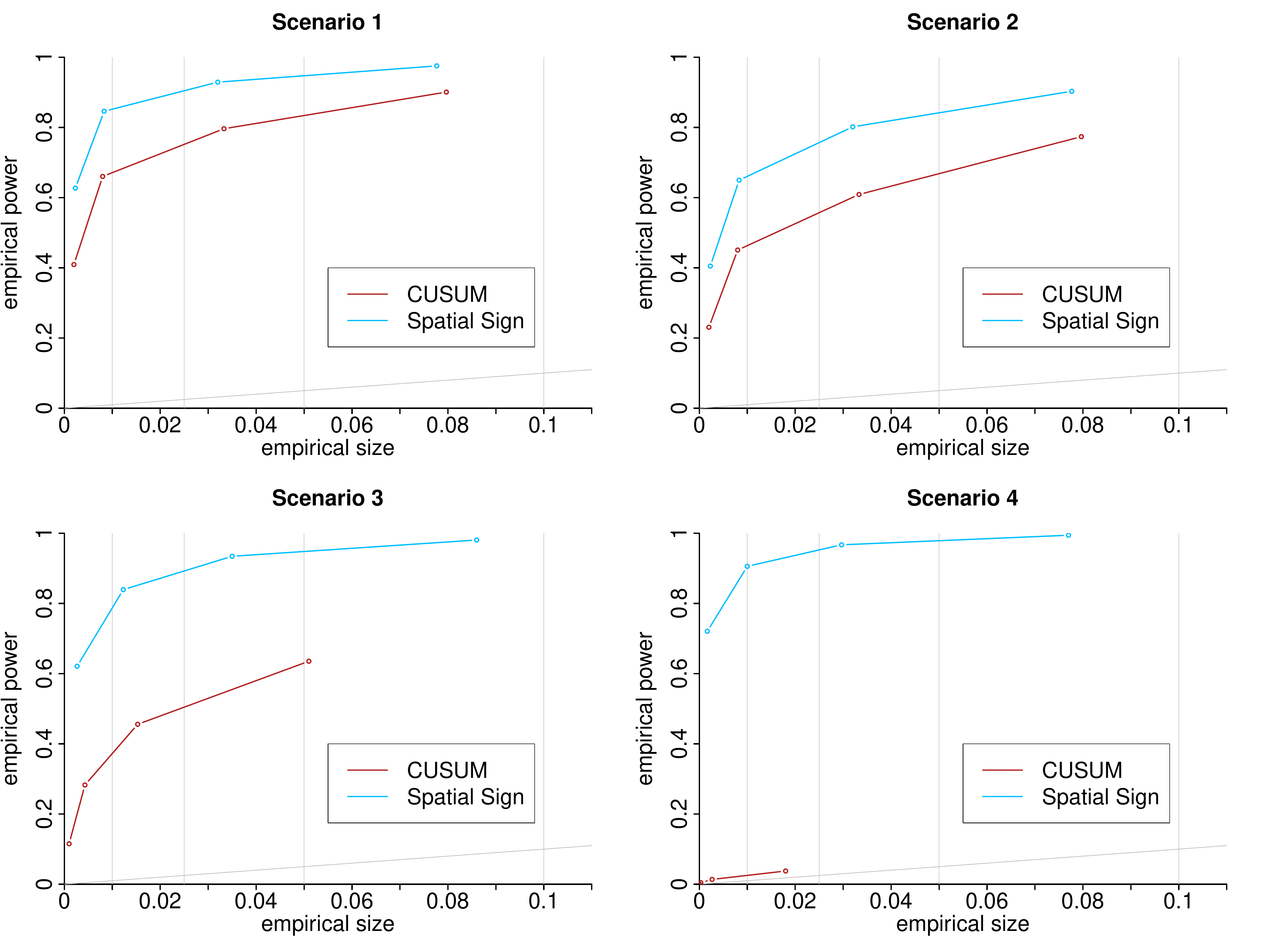}
\caption{Size-Power-Plot for CUSUM and Spatial Sign, Scenario 1-4.}
\label{S_P}
\end{center}
\end{figure}

\section{Auxilary Results}

\subsection{Hoeffding Decomposition and Linear Part}

The proofs will make use of Hoeffding's decomposition of the kernel $h$, so recall that Hoeffding's decomposition of $h$ is defined as
\[ h(x,y)= h_1(x)-h_1(y)+h_2(x,y) \, \forall x,y \in H,  \]
where \[h_1(x) = \E[h(x,\tilde{X})]  \]
\[h_2(x,y)= h(x,y) - \E[h(x,\tilde{X})] - \E[h(X,y)] = h(x,y) -h_1(x)+h_1(y)  \]
where $X,\tilde{X}$ are independent copies of $X_0$.  It is well known that $h_2$ is degenerate, that means $ \E[h_2(x,\tilde{X})]=\E[h_2(X,y)]=0$, see e.g. Section 1.6 in the book of \cite{lee2019u}.

\begin{lem}[Hoeffding's decomposition of $U_{n,k}$] \label{Hoeff}
Let $h:H\times H \rightarrow {H}$ be an antisymmetric kernel. Under Hoeffding's decomposition it holds for the test statistic that 
\[ U_{n,k}= \sum_{i=1}^k \sum_{j=k+1}^n h(X_i,X_j) = \underbrace{n \sum_{i=1}^k(h_1(X_i)-\overline{h_1(X)})}_{\text{linear part}} + \underbrace{\sum_{i=1}^k\sum_{j=k+1}^n h_2(X_i,X_j)}_{\text{degenerate part} }\]
where $\overline{h_1(X)} = \frac{1}{n}\sum_{j=1}^n h_1(X_j). $
\end{lem}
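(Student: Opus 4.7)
The plan is to simply substitute the Hoeffding decomposition into the double sum and rearrange. First I would recall that, since $h$ is antisymmetric, one has $E[h(X,y)] = -E[h(y,X)] = -h_1(y)$, so the general Hoeffding decomposition collapses to the convenient form
\[
h(x,y) = h_1(x) - h_1(y) + h_2(x,y),
\]
as stated in the excerpt. Inserting this identity term-by-term into $U_{n,k} = \sum_{i=1}^k\sum_{j=k+1}^n h(X_i,X_j)$ immediately separates the double sum into a linear piece $\sum_{i=1}^k\sum_{j=k+1}^n\bigl(h_1(X_i) - h_1(X_j)\bigr)$ and the degenerate piece $\sum_{i=1}^k\sum_{j=k+1}^n h_2(X_i,X_j)$, which already matches the second summand in the claim.

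For the linear piece, since $h_1(X_i)$ does not depend on $j$ and $h_1(X_j)$ does not depend on $i$, I would pull out the factors that are constant in the non-summed index and obtain
\[
(n-k)\sum_{i=1}^k h_1(X_i) \;-\; k\sum_{j=k+1}^n h_1(X_j).
\]
The next step is to rewrite the second sum by completing it to a sum over all $j=1,\dots,n$, namely $\sum_{j=k+1}^n h_1(X_j) = \sum_{j=1}^n h_1(X_j) - \sum_{j=1}^k h_1(X_j)$. Substituting this in yields $n\sum_{i=1}^k h_1(X_i) - k\sum_{j=1}^n h_1(X_j)$, and factoring out $n$ from each term in the first sum and recognizing $\tfrac1n\sum_{j=1}^n h_1(X_j) = \overline{h_1(X)}$ gives exactly $n\sum_{i=1}^k \bigl(h_1(X_i) - \overline{h_1(X)}\bigr)$, as required.

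There is no real obstacle here: the lemma is essentially bookkeeping once the antisymmetric form of Hoeffding's decomposition is in hand. The only point worth flagging is the use of antisymmetry to reduce $E[h(X,y)]$ to $-h_1(y)$, which is what makes the linear part split into the clean $h_1(x)-h_1(y)$ form rather than involving two separate marginal projections; after that, the proof is a one-line reindexing.
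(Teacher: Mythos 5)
Your proposal is correct and follows essentially the same route as the paper's proof: substitute the antisymmetric form $h(x,y)=h_1(x)-h_1(y)+h_2(x,y)$ into the double sum, factor the linear piece into $(n-k)\sum_{i=1}^k h_1(X_i)-k\sum_{j=k+1}^n h_1(X_j)$, and complete the second sum over all $j=1,\dots,n$ to obtain $n\sum_{i=1}^k\bigl(h_1(X_i)-\overline{h_1(X)}\bigr)$. The remark on antisymmetry collapsing the two marginal projections into a single $h_1$ is a correct and worthwhile clarification, but the computation itself matches the paper's.
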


\begin{proof}
To prove the formula for $U_{n,k}$, we use Hoeffding's decomposition for $h$: 
\begin{align*}
 U_{n,k} &=\sum_{i=1}^k \sum_{j=k+1}^n h(X_i,X_j) = \sum_{i=1}^k\sum_{j=k+1}^n [h_1(X_i)-h_1(X_j)+h_2(X_i,X_j) ]  \\
&= \sum_{i=1}^k \sum_{j=k+1}^n [h_1(X_i)-h_1(X_j)] + \sum_{i=1}^k \sum_{j=k+1}^n h_2(X_i,X_j) \\
&= (n-k)h_1(X_1)-\sum_{j=k+1}^n h_1(X_j)+...+(n-k)h_1(X_k)-\sum_{j=k+1}^n h_1(X_j) \\ & \;\;\;\;\;\; + \sum_{i=1}^k \sum_{j=k+1}^n h_2(X_i,X_j)  \\
&= n h_1(X_1)-\sum_{j=1}^n h_1(X_j)+...+n h_1(X_k)-\sum_{j=1}^n h_1(X_j) \\
& \;\;\;\;\;\; + \sum_{i=1}^k \sum_{j=k+1}^n h_2(X_i,X_j) \\
&= n\Big(\sum_{i=1}^k[h_1(X_i)-\frac{1}{n}\sum_{j=1}^n h_1(X_j)]\Big)+\sum_{i=1}^k \sum_{j=k+1}^n h_2(X_i,X_j) \\
&= n \sum_{i=1}^k \Big(h_1(X_i)-\overline{h_1(X)}\Big) + \sum_{i=1}^k \sum_{j=k+1}^n h_2(X_i,X_j).
\end{align*}
\end{proof}
To use existing results about partial sums, we need to investigate the properties of the sequence $(h_1(X_n))_{n\in\Z}$.
\begin{lem}\label{linLem}
Under the assumptions of Theorem \ref{theo1}, $(h_1(X_n))_{n\in\mathbb{Z}}$ is $L_2$-NED with approximation constants $a_{k,2}=\mathcal{O}(k^{-4\frac{\delta+3}{\delta}})$.
\end{lem}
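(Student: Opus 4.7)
The strategy is to approximate $h_1(X_0)$ by $h_1(Y_k)$, where $Y_k := f_k(\zeta_{-k}, \ldots, \zeta_k)$ is the approximating random variable provided by the P-NED property. Since $h_1(Y_k)$ is $\mathfrak{F}_{-k}^k$-measurable and conditional expectation is the best $L_2$-approximation, it suffices to establish
\[\mathbb{E}\big[\|h_1(X_0) - h_1(Y_k)\|^2\big] = O\big(k^{-8(\delta+3)/\delta}\big),\]
after which taking square roots yields the desired rate.

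To reduce this to a bound on the kernel $h$ itself, let $\tilde X$ denote an independent copy of $X_0$. Since $h_1(x) = \mathbb{E}[h(x,\tilde X)]$ is a Bochner integral, Jensen's inequality applied conditionally on $(X_0, Y_k)$ gives
\[\mathbb{E}\big[\|h_1(X_0) - h_1(Y_k)\|^2\big] \leq \mathbb{E}\big[\|h(X_0, \tilde X) - h(Y_k, \tilde X)\|^2\big].\]
I would then split this expectation according to the events $A_\epsilon := \{\|X_0 - Y_k\| \leq \epsilon\}$ and $A_\epsilon^c$, where $\epsilon > 0$ is a parameter to be optimised at the end.

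On $A_\epsilon$, the integrand is dominated pointwise by $\sup_{\|x - X_0\| \leq \epsilon,\,\|y - \tilde X\| \leq \epsilon}\|h(x,y) - h(X_0, \tilde X)\|^2$ (the constraint on $y$ being trivially satisfied at $y = \tilde X$), so the variation condition contributes $L\epsilon$. On $A_\epsilon^c$, I would apply H\"older's inequality with conjugate exponents $(4+\delta)/2$ and $(4+\delta)/(\delta+2)$: the uniform $(4+\delta)$-moments (or boundedness) of $h$ bound the $L^{4+\delta}$-norm of $h(X_0,\tilde X) - h(Y_k,\tilde X)$ by a constant, and the P-NED estimate $P(A_\epsilon^c) \leq a_k \Phi(\epsilon)$ controls the probability, giving a contribution of order $(a_k \Phi(\epsilon))^{(\delta+2)/(4+\delta)}$.

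Setting $\epsilon = k^{-8(\delta+3)/\delta}$ makes the first piece $O(k^{-8(\delta+3)/\delta})$ by construction. The hypothesis $a_k\Phi(\epsilon) = O(k^{-8(\delta+3)(\delta+2)/\delta^2})$ then turns the second piece into $O\bigl(k^{-8(\delta+3)(\delta+2)^2/(\delta^2(\delta+4))}\bigr)$, and the elementary inequality $(\delta+2)^2 \geq \delta(\delta+4)$ (slack exactly $4$) shows this exponent is at least $8(\delta+3)/\delta$, so both pieces have the required rate. The main subtlety is that the peculiar exponent $8(\delta+3)/\delta$ in the P-NED/variation hypothesis is precisely calibrated so that the variation term and the H\"older term balance; matching this calibration is the only nontrivial bookkeeping in the argument.
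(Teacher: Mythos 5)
Your proposal is correct and follows essentially the same route as the paper: bound the conditional expectation by the $\mathfrak{F}_{-k}^k$-measurable approximation $h_1(f_k(\zeta_{-k},\dots,\zeta_k))$, reduce to the kernel level by Jensen, split on $\{\|X_0-Y_k\|\leq\epsilon\}$, use the variation condition on the good event and H\"older plus the uniform moments and P-NED on the bad one, and set $\epsilon=k^{-8(\delta+3)/\delta}$. The only cosmetic difference is your choice of H\"older exponents ($\tfrac{4+\delta}{2}$ and its conjugate, versus the paper's $\tfrac{2+\delta}{2}$ and $\tfrac{2+\delta}{\delta}$), which leaves the slack you note, whereas the paper's choice makes the probability term match the rate $k^{-8(\delta+3)/\delta}$ exactly.
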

\begin{proof}
By Hoeffding's decomposition for $h$ it holds that $\forall x,x' \in H$
\begin{equation*}
\Vert h_1(x)-h_1(x')\Vert =\Vert \mathbb{E}[h(x,\tilde{X})]-\E[h(x',\tilde{X})]\Vert 
\end{equation*} 
Let $X,\tilde{X}$ be independent copies of $X_0$. Then by Jensen's inequality for conditional expectations and the variation condition
\begin{align}\label{2}
& \E\bigg[\Big( \sup_{\Vert x-X\Vert\leq\epsilon} \Vert h_1(x)-h_1(X)\Vert_{{H}}\Big)^2\bigg] \\
 = & \E\bigg[\Big(\sup_{\Vert x-X\Vert\leq\epsilon} E\big[\Vert h(x,\tilde{X})-h(X,\tilde{X})\Vert\big| X\big]\Big)^2\bigg] \nonumber\\
 \leq & \E\bigg[\Big(\sup_{\Vert x-X\Vert\leq\epsilon} \Vert h(x,\tilde{X})-h(X,\tilde{X})\Vert\Big)^2\bigg] \nonumber \\
 \leq &\E\bigg[\Big(\sup_{\substack{\Vert x-X\Vert\leq\epsilon \\ \Vert y-\tilde{X}\Vert\leq\epsilon }} \Vert h(x,y)-h(X,\tilde{X})\Vert \Big)^2  \bigg] \leq L\epsilon. \nonumber
\end{align}
We introduce the following notation: Let $X_{n,k}=f_k(\zeta_{n-k},...,\zeta_{n+k})$ and $\tilde{X}_{n,k}$ and independent copy of this random variable. Now, we can find the approximation constants of $(h_1(X_n))_n$ by using \eqref{2} and some further inequalities:
\begin{align*}
& \E[\Vert h_1(X_0)-\E[h_1(X_0)|\mathfrak{F}_{-k}^k]\Vert^2] \leq \E[\Vert h_1(X_0)-h_1(X_{0,k})\Vert^2] \\
&= \E[ \Vert h_1(X_0)-h_1(X_{0,k})\Vert^2 \textbf{1}_{\{\Vert X_0-X_{0,k}\Vert >s_k  \}} ]\\
& \hspace{100pt}+\E[ \Vert h_1(X_0)-h_1(X_{0,k})\Vert ^2 \textbf{1}_{\{\Vert X_0-X_{0,k}\Vert \leq s_k  \}}] \\
& \leq \E[ \Vert h_1(X_0)-h_1(X_{0,k})\Vert^2 \textbf{1}_{\{\Vert X_0-X_{0,k}\Vert >s_k  \}} ] \\
& \hspace{100pt}+ \underbrace{\E\bigg[\Big(\sup_{\Vert X_0-X_{0,k}\Vert \leq s_k} \Vert h_1(X_0)-h_1(X_{0,k})\Vert \Big)^2\bigg]}_{\overset{(\ref{2})}{\leq} Ls_k} \\
& \leq \left\Vert \Vert h_1(X_0)-h_1(X_0,k) \Vert ^2 \right\Vert_{\frac{2+\delta}{2}}+\left\Vert \textbf{1}_{\{\Vert X_0-X_{0,k}\Vert >s_k  \}} \right\Vert_{\frac{2+\delta}{\delta}} + L s_k \\
& \hspace{50pt}\text{by Hölder's inequality} \\
& = \left\Vert \Vert h_1(X_0)-h_1(X_0,k) \Vert ^2 \right\Vert_{\frac{2+\delta}{2}}+  \mathbb{P}(\Vert X_0-X_{0,k} \Vert > s_k)^{\frac{\delta}{2+\delta}} + Ls_k \displaybreak[0]\\
& \leq \E[ \Vert h_1(X_0)-h_1(X_{0,k})\Vert^{2+\delta} ]^{\frac{2}{2+\delta}} + (a_k\Phi(s_k))^{\frac{\delta}{2+\delta}} + L s_k \;\;\; \text{since $(X_n)_n$ is P-NED} \\
&=\E\left[\left\Vert \E[h(X_0,\tilde{X}_0) | X_0,X_{0,k}] - \E[h(X_{0,k},\tilde{X}_{0,k})|X_0,X_{0,k}] \right\Vert^{2+\delta}  \right]^\frac{2}{2+\delta} (a_k\Phi(s_k))^{\frac{\delta}{2+\delta}} \\
& \hspace{100pt} + L s_k \displaybreak[0]\\
& \leq \E\left[ \E[ \Vert h(X_0,\tilde{X}_0)-h(X_{0,k},\tilde{X}_{0,k}) \Vert^{2+\delta} | X_0,X_{0,k}] \right]^{\frac{2}{2+\delta}} (a_k\Phi(s_k))^{\frac{\delta}{2+\delta}} + Ls_k \\
& \hspace{50pt}\text{by Jensen's inequality} \\
&= \left( \E[\Vert h(X_0,\tilde{X}_0)-h(X_{0,k},\tilde{X}_{0,k}) \Vert ^{2+\delta} ]^{\frac{1}{2+\delta}} \right)^2 (a_k\Phi(s_k))^{\frac{\delta}{2+\delta}} +Ls_k \displaybreak[0]\\
& \leq \left( \E[\Vert h(X_0,\tilde{X}_0)\Vert^{2+\delta}]^{\frac{1}{2+\delta}} + \E[\Vert h(X_{0,k},\tilde{X}_{0,k})\Vert ^{2+\delta}]^{\frac{1}{2+\delta}} \right)^2 (a_k\Phi(s_k))^{\frac{\delta}{2+\delta}} +Ls_k  \\
& \hspace{50pt} \text{by Minkowski's inequality} \\
& \leq (M^{\frac{1}{2+\delta}}+M^{\frac{1}{2+\delta}} )^2 (a_k\Phi(s_k))^{\frac{\delta}{2+\delta}} +Ls_k \\
& \hspace{50pt} \text{by the uniform moment condition, choose $s_k=k^{-8\frac{3+\delta}{\delta}}$} \\
& \leq C (k^{-8 \frac{(3+\delta)(2+\delta)}{\delta^2}})^{\frac{\delta}{2+\delta}} +L k^{-8\frac{3+\delta}{\delta}} \;\;\; \text{by the assumption on the P-NED coefficients} \\
& = C k^{-8 \frac{3+\delta}{\delta}}.
\end{align*}
By taking the square root, we get the result:
\[ \left( \E[\Vert h_1(X_0)-\E[h_1(X_0)| \mathfrak{F}_{-k}^k]\Vert^2] \right)^{\frac{1}{2}} \leq C k^{-4 \frac{3+\delta}{\delta}} =: a_{k,2}. \]
Since it holds that $a_{k,2} \xrightarrow{k\to\infty} 0$, $(X_n)_{n\in\mathbb{Z}}$ is $L_2$-NED.
\end{proof}

\begin{prop} \label{linThm}
 Under Assumptions of Theorem \ref{theo1} it holds:
\[ \Big( \frac{1}{\sqrt{n}} \sum_{i=1}^{\left \lfloor{n\lambda}\right\rfloor  } h_1(X_i) \Big)_{\lambda\in [0,1]} \Rightarrow (W(\lambda))_{\lambda\in [0,1]}  \]
where $(W(\lambda))_{\lambda\in [0,1]}$ is  a Brownian motion with covariance operator as defined in Theorem \ref{theo1}.
\end{prop}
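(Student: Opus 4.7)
The plan is to reduce the claim to a standard Hilbert-space-valued functional central limit theorem for $L_2$-NED sequences on absolutely regular processes, whose hypotheses we verify using Lemma \ref{linLem} and the assumptions of Theorem \ref{theo1}. Write $Y_i := h_1(X_i)$; I first check that $(Y_n)$ is a centered, stationary, $L_2$-bounded, $L_2$-NED sequence on $(\zeta_n)$. Stationarity is inherited from $(X_n)$. Centering: by antisymmetry of $h$, if $X,\tilde X$ are i.i.d.\ copies of $X_0$, then $E[h(X,\tilde X)]=-E[h(\tilde X,X)]=-E[h(X,\tilde X)]$, so $E[h_1(X_0)]=E[h(X_0,\tilde X)]=0$. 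A uniform $(2{+}\delta)$-moment follows from the uniform moment assumption on $h$ and Jensen applied to the conditional expectation defining $h_1$. The $L_2$-NED property with $a_{k,2}=O(k^{-4(3+\delta)/\delta})$ is exactly Lemma \ref{linLem}.

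Next I would establish convergence of finite-dimensional distributions via the Cramér–Wold device. For any $x\in H$, the real-valued sequence $(\langle Y_n,x\rangle)$ is $L_2$-NED on $(\zeta_n)$ with constants at most $\|x\|\cdot a_{k,2}$ (by Cauchy–Schwarz) and inherits the $(2{+}\delta)$-moment bound. Together with the summability $\sum_k k^2\beta_k^{\delta/(4+\delta)}<\infty$, this places $(\langle Y_n,x\rangle)$ squarely in the scope of classical CLTs for $L_2$-NED functionals of absolutely regular sequences (e.g.\ along the lines of Borovkova–Burton–Dehling or Dehling–Wendler), yielding $n^{-1/2}\sum_{i=1}^{\lfloor n\lambda\rfloor}\langle Y_i,x\rangle\Rightarrow \langle W(\lambda),x\rangle$ for a real Brownian motion with variance $\langle S(x),x\rangle\lambda$. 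Joint convergence for $x_1,\dots,x_m$ follows from the same argument applied to arbitrary linear combinations, identifying the limit covariance as the $S$ stated in the theorem.

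For tightness in $C([0,1],H)$ I would prove the second-moment increment bound
\begin{equation*}
E\Bigl\|\sum_{i=a+1}^{b}Y_i\Bigr\|^2 \le C(b-a)
\end{equation*}
uniformly in $a<b$, by bounding $|E\langle Y_i,Y_j\rangle|$ using a standard coupling argument: approximate $Y_i,Y_j$ by the truncated versions built from $f_{k}(\zeta_{i-k},\dots,\zeta_{i+k})$, bound the approximation error by $a_{k,2}$, and control the remaining covariance via the absolute regularity coefficient $\beta_{|i-j|-2k}$ together with the $(2{+}\delta)$-moment. Choosing $k$ as a fraction of $|i-j|$ and using the decay rates yields $\sum_{k\in\Z}|E\langle Y_0,Y_k\rangle|<\infty$, which simultaneously shows that $S$ is a well-defined trace-class covariance operator on $H$ and gives the above increment bound by summation. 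Tightness in $C([0,1],H)$ then follows by the usual Hilbert-space chaining (one may also reduce it to tightness of the real-valued norm process via the finite-dimensional convergence on a countable dense subset of the unit ball of $H$).

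The main obstacle is the tightness step, specifically the verification that the polynomial decay of $a_{k,2}$ combined with $\sum k^2\beta_k^{\delta/(4+\delta)}<\infty$ is strong enough to sum the covariance series in the Hilbert-space norm uniformly, and that the limiting $S$ is trace-class; once this is done, everything else is a routine assembly of the Cramér–Wold argument with Hilbert-space tightness.
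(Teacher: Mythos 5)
Your route is genuinely different from the paper's: the paper does not prove the invariance principle from scratch, but verifies the four hypotheses of Theorem 1 of \cite{STW16} for the sequence $(h_1(X_n))_{n\in\Z}$ --- namely $L_1$-NED (from Lemma \ref{linLem} plus Jensen), uniform $(4+\delta)$-moments of $h_1(X_0)$ (from the uniform moment condition plus Jensen), summability of $m^2 a_{m,1}^{\delta/(3+\delta)}$ (from the rate $a_{k,2}=O(k^{-4(3+\delta)/\delta})$), and the assumed mixing rate --- together with $E[h_1(X_0)]=0$ by antisymmetry. Your preliminary checks (centering, stationarity, moments, $L_2$-NED) and the Cram\'er--Wold reduction for the finite-dimensional distributions are sound and essentially reproduce what that citation encapsulates.

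The genuine gap is in your tightness step. The increment bound $E\bigl\|\sum_{i=a+1}^{b}Y_i\bigr\|^2\leq C(b-a)$ rescales to $E\|W_n(t)-W_n(s)\|^2\leq C|t-s|$, and an exponent of exactly $1$ on $|t-s|$ is not sufficient for tightness of the partial-sum process under any Kolmogorov--Chentsov or Billingsley-type chaining criterion; one needs either a higher moment of the increments with exponent strictly greater than $1$ on $|t-s|$, or a maximal inequality, neither of which you supply. This is precisely why Theorem 1 of \cite{STW16} assumes $(4+\delta)$-moments: the tightness there rests on a fourth-moment bound for increments in the spirit of Lemma 2.24 of \cite{BBD01}, i.e.\ control of sums of terms $E[\langle Y_i,Y_j\rangle\langle Y_k,Y_l\rangle]$ (the same device reappears in the paper's proof of Proposition \ref{bootpartial}). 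A second, smaller issue: in an infinite-dimensional $H$, finite-dimensional convergence of the projections plus a norm-increment bound does not by itself give tightness in the Skorokhod space of $H$-valued paths (note also that the process lives in $D$, not $C$); one must additionally verify a flat-concentration condition, i.e.\ uniform smallness of the contribution of high coordinates in an orthonormal basis, which your sketch does not address. Both issues disappear if you instead check the hypotheses of the cited functional CLT, as the paper does.
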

\begin{proof}
We want to use Theorem 1 \cite{STW16} for $(h_1(X_n))_{n\in\mathbb{Z}}$, so we have to check the assumptions: 

\underline{Assumption 1:} $(h_1(X_n))_{n\in\mathbb{Z}}$ is $L_1$-NED. \\
We know by Lemma \ref{linLem} that $(h_1(X_n))_{n\in\mathbb{Z}}$ is $L_2$-NED. Thus, $L_1$-NED follows by Jensen's inequality:
\begin{align*}
\E[\Vert h_1(X_0)-\E[h_1(X_0)|\mathfrak{F}_{-k}^k]\Vert] & \leq \E[\Vert h_1(X_0)-\E[h_1(X_0)| \mathfrak{F}_{-k}^k] \Vert^2]^{\frac{1}{2}}  \leq a_{k,2}
\end{align*}
So, $(h_1(X_n))_{n\in\mathbb{Z}}$ is $L_1$-NED with constants $a_{k,1}=a_{k,2}=Ck^{-4\frac{3+\delta}{\delta}}$.

\underline{Assumption 2:} Existing $(4+\delta)$-moments. \\
This follows from the assumption of uniform moments under approximation:
\begin{align*}
\E[\Vert h_1(X_0) \Vert ^{4+\delta}] & = \E[\Vert \E[h(X_0,\tilde{X}_0)| X_0] \Vert^{4+\delta} ] \\
& \leq \E[ \E[\Vert h(X_0,\tilde{X}_0) \Vert^{4+\delta} | X_0  ]]  \;\;\; \text{by Jensen's inequality} \\
& = \E[\Vert h(X_0,\tilde{X}_0) \Vert^{4+\delta}] \leq M < \infty
\end{align*}
In the case that $h$ is bounded, the same holds for $h_1$.

\underline{Assumption 3:} $\sum_{m=1}^{\infty} m^2 a_{m,1}^{\frac{\delta}{3+\delta}} < \infty$ \\
\begin{align*}
\sum_{m=1}^{\infty} m^2 a_{m,1}^{\frac{\delta}{3+\delta}} = C \sum_{m=1}^{\infty}m^2(m^{-4\frac{3+\delta}{\delta}})^{\frac{\delta}{3+\delta}} = C \sum_{m=1}^{\infty} m^2 m^{-4} = C \sum_{m=1}^{\infty} m^{-2} < \infty
\end{align*}

\underline{Assumption 4:} $\sum_{m=1}^{\infty} m^2 \beta_m^{\frac{\delta}{4+\delta}} < \infty$. \\
This holds directly by the assumed rate on the coefficients $\beta_m$.

We have checked that all assumptions for Theorem 1 \cite{STW16} are fulfilled and since $\E[h_1(X_0)]=0$ because $h$ is antisymmetric, the statement of the theorem follows. 
\end{proof}

\subsection{Degenerate part}

\begin{lem} \label{P1}
Under the assumptions of Theorem \ref{theo1}, there exists a universal constant $C>0$ such that for every $i,k,l\in\mathbb{N}$, $\epsilon>0$ it holds that
\[ \E[\Vert h_2(X_i,X_{i+k+2l})-h_2(X_{i,l},X_{i+k+2l,l})\Vert^2]^{\frac{1}{2}} \leq C(\sqrt{\epsilon}+\beta_k^{\frac{\delta}{2(2+\delta)}}+(a_l\Phi(\epsilon))^{\frac{\delta}{2(2+\delta)}} ),  \]
where $X_{i,l} = f_l(\zeta_{i-l},...,\zeta_{i+l})$.
\end{lem}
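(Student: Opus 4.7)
The plan is to apply Hoeffding's decomposition $h_2(x,y)=h(x,y)-h_1(x)+h_1(y)$ and the triangle inequality in $L^2$ to split
\[
\|h_2(X_i,X_{i+k+2l})-h_2(X_{i,l},X_{i+k+2l,l})\|_{L^2}\leq T_h+T_1+T_2,
\]
where $T_h$ is the $h$-difference and $T_1,T_2$ are the two $h_1$-differences, and then to bound each piece so that the three terms $\sqrt{\epsilon}$, $\beta_k^{\delta/(2(2+\delta))}$, and $(a_l\Phi(\epsilon))^{\delta/(2(2+\delta))}$ appear naturally. The $h_1$-terms will not involve the mixing gap; the $h$-term is where $\beta_k$ must enter.

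By stationarity, $T_1=T_2=\|h_1(X_0)-h_1(X_{0,l})\|_{L^2}$, so I would recycle the chain of inequalities from the proof of Lemma~\ref{linLem} almost verbatim: split on $\{\|X_0-X_{0,l}\|>\epsilon\}$ versus its complement; on the good set use the variation-derived inequality~\eqref{2} to get a second-moment contribution of order $L\epsilon$; on the bad set use H\"older with exponents $(2+\delta)/2$ and $(2+\delta)/\delta$ together with the uniform $(2+\delta)$-moments of $h_1$ (inherited from $h$ by Jensen) and the P-NED bound $\mathbb{P}(\|X_0-X_{0,l}\|>\epsilon)\leq a_l\Phi(\epsilon)$ to obtain a contribution of order $(a_l\Phi(\epsilon))^{\delta/(2+\delta)}$. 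Taking square roots gives $T_1,T_2\leq C(\sqrt{\epsilon}+(a_l\Phi(\epsilon))^{\delta/(2(2+\delta))})$, without any $\beta_k$-factor.

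For $T_h$ I would condition on the P-NED good event $A=\{\|X_i-X_{i,l}\|\leq\epsilon,\ \|X_{i+k+2l}-X_{i+k+2l,l}\|\leq\epsilon\}$, which satisfies $\mathbb{P}(A^c)\leq 2a_l\Phi(\epsilon)$. The $A^c$-part is handled by H\"older plus the uniform $(2+\delta)$-moments of the kernel under approximation and contributes at most $C(a_l\Phi(\epsilon))^{\delta/(2(2+\delta))}$. On $A$ the pointwise bound
\[
\|h(X_i,X_{i+k+2l})-h(X_{i,l},X_{i+k+2l,l})\|\leq\sup_{\substack{\|x-X_{i,l}\|\leq\epsilon\\\|y-X_{i+k+2l,l}\|\leq\epsilon}}\|h(x,y)-h(X_{i,l},X_{i+k+2l,l})\|
\]
reduces matters to estimating the second moment of the right-hand side. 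Since the variation condition is stated for independent copies of $X_1$, while $(X_{i,l},X_{i+k+2l,l})$ is neither independent nor of that marginal, the plan is to invoke Berbee's coupling to construct $\tilde X_{i+k+2l,l}$ with the law of $X_{i+k+2l,l}$, independent of the $\sigma$-algebra $\sigma(\zeta_j:j\leq i+l)$, and satisfying $\mathbb{P}(\tilde X_{i+k+2l,l}\neq X_{i+k+2l,l})\leq\beta_k$. The coupling-failure event is treated by one more H\"older bound and contributes at most $C\beta_k^{\delta/(2(2+\delta))}$. On the coupling-success event the pair is independent, and after a small enlargement of the $\epsilon$-balls to absorb the gap between the marginal of $X_{0,l}$ and that of $X_0$, the variation condition gives an $L\epsilon$-bound in second moment, producing the $\sqrt{\epsilon}$-contribution.

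The main obstacle is the last step: forcing the statement of the variation condition (independent copies of $X_1$) into a form applicable to $(X_{i,l},X_{i+k+2l,l})$, which has both the wrong dependence structure and a slightly different marginal. Berbee coupling supplies the missing independence at price $\beta_k^{\delta/(2(2+\delta))}$, and the marginal gap is absorbed into the constant via a harmless enlargement of the balls by the P-NED approximation error. Summing $T_h+T_1+T_2$ then collects the three contributions into the stated bound.
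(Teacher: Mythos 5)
Your overall architecture coincides with the paper's: a coupling to manufacture independence (cost $\beta_k^{\delta/(2(2+\delta))}$), the P-NED inequality to control the approximation error (cost $(a_l\Phi(\epsilon))^{\delta/(2(2+\delta))}$), the variation condition on the good event (cost $\sqrt{\epsilon}$), and H\"older plus the uniform moment bounds on every bad event. The Hoeffding pre-split into $T_h+T_1+T_2$ is a harmless reorganisation (the paper works with $h_2$ directly, which inherits the variation condition from $h$ and from the bound \eqref{2} on $h_1$), and your treatment of $T_1,T_2$ by recycling the argument of Lemma \ref{linLem} with $s_k=\epsilon$ is fine.

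The one step that does not go through as written is the application of the variation condition for $T_h$. After your Berbee coupling, the independent pair at hand is $(X_{i,l},\tilde X_{i+k+2l,l})$, both coordinates of which have the law of $X_{0,l}$, not of $X_1$; the expectation in the variation condition is taken with respect to the law of an \emph{independent pair with the true marginal}, and no enlargement of the $\epsilon$-balls changes the measure you are integrating against. Nor can you simply re-center the first coordinate at $X_i$: since $X_i=f((\zeta_{i+n})_{n\in\Z})$ depends on the future innovations as well, $X_i$ is not independent of the coupled future variable, so independence would be lost. What is needed is a true-marginal, mutually independent pair constructed on the same space and close to $(X_i,X_{i+k+2l})$ and $(X_{i,l},X_{i+k+2l,l})$ simultaneously; this forces a coupling on \emph{both} sides (past and future), which is exactly why the paper introduces two mutually independent coupled innovation sequences $(\zeta'_n)_{n\in\Z}$ and $(\zeta''_n)_{n\in\Z}$ and sets $X''_i$, $X'_{i+k+2l}$: these have the exact law of $X_0$, are independent of each other, and on the coupling event satisfy $\|X_i-X''_i\|\le\|X_i-X_{i,l}\|+\|X''_i-X''_{i,l}\|$ with each term controlled by P-NED. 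Your sketch can be repaired either by adding this second coupling or by generating fresh true-marginal centers from the conditional law of $X_0$ given $X_{0,l}$ with independent auxiliary randomness, but as stated the ``harmless enlargement of the balls'' fixes only the proximity issue, not the wrong marginal of the centers.
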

\begin{proof}
By Lemma D1 \cite{DVWW17} there exist copies $(\zeta'_n)_{n\in\mathbb{Z}}$, $(\zeta''_n)_{n\in\mathbb{Z}}$ of $(\zeta_n)_{n\in\mathbb{Z}}$ which are independent of each other and satisfy
\begin{align} \label{Z}
\Prob((\zeta'_n)_{n\geq i+k+l}=(\zeta_n)_{n\geq i+k+l})=1-\beta_k \;\;\; \text{and} \;\;\; \Prob((\zeta''_n)_{n\leq i+l}=(\zeta_n)_{n\leq i+l})=1-\beta_k
\end{align}
Define \begin{align*}
& X'_i= f( (\zeta'_{i+n})_{n\in\mathbb{Z}}) \; , \;\; \;  X''_i= f( (\zeta''_{i+n})_{n\in\mathbb{Z}}) \\
& X'_{i,l}=f_l(\zeta'_{i-l},...,\zeta'_{i+l}) \; , \;\;\; X''_{i,l}=f_l(\zeta''_{i-l},...,\zeta''_{i+l}) .
\end{align*}
With the help of these, we can write
\begin{align}
& \E[\Vert h_2(X_i,X_{i+k+2l})-h_2(X_{i,l},X_{i+k+2l,l}) \Vert^2]^{\frac{1}{2}} \nonumber\\
& \leq \E[\Vert h_2(X_i,X_{i+k+2l})-h_2(X''_i, X'_{i+k+2l}) \Vert ^2]^{\frac{1}{2}} \label{I}\\
& \;\;\; + \E[\Vert h_2(X''_i, X'_{i+k+2l})-h_2(X''_{i,l}, X'_{i+k+2l,l}) \Vert^2]^{\frac{1}{2}} \label{II}\\
& \;\;\; + \E[\Vert h_2(X''_{i,l}, X'_{i+k+2l,l})-h_2(X_{i,l},X_{i+k+2l,l})\Vert^2]^{\frac{1}{2}} \label{III}
\end{align}
by using the triangle inequality. We will look at the three summands separately. For abbreviation, we define 
\[B= \{     (\zeta'_n)_{n\geq i+k+l} = (\zeta_n)_{n\geq i+k+l}, \,(\zeta''_n)_{n\leq i+l} = (\zeta_n)_{n\leq i+l} \} \]
\[ B^c= \{     (\zeta'_n)_{n\geq i+k+l} \neq (\zeta_n)_{n\geq i+k+l} \text{ or } (\zeta''_n)_{n\leq i+l} \neq (\zeta_n)_{n\leq i+l} \}  \]
\begin{align*}
 (\ref{I})& = \E[\Vert h_2(X_i,X_{i+k+2l})-h_2(X''_i, X'_{i+k+2l}) \Vert ^2]^{\frac{1}{2}} \\
& \leq \;\; \E[\Vert h_2(X_i,X_{i+k+2l})-h_2(X''_i, X'_{i+k+2l}) \Vert ^2  \textbf{1}_{B^c}]^{\frac{1}{2}} \tag{\ref{I}.A} \label{IA}\\
& \;\;\; + \E[\Vert h_2(X_i,X_{i+k+2l})-h_2(X''_i, X'_{i+k+2l}) \Vert ^2   
\textbf{1}_{B}]^{\frac{1}{2}} \tag{\ref{I}.B} .\label{IB}
\end{align*}
 For (\ref{IA}), we use Hölder's inequality together with our assumptions on uniform moments under approximation and get
\begin{align*}
(\ref{IA}) & \leq \E[\Vert h_2(X_i,X_{i+k+2l})-h_2(X''_i,X'_{i+k+2l})\Vert^{\frac{2(2+\delta)}{2}}]^{\frac{2}{2(2+\delta)}}\Prob(B^c )^{\frac{\delta}{2(2+\delta)}} \\ 
& \leq \left( \E[\Vert h_2(X_i,X_{i+k+2l})\Vert^{2+\delta}]^{\frac{1}{2+\delta}} + \E[\Vert h_2(X''_i, X'_{i+k+2l})\Vert^{2+\delta}]^{\frac{1}{2+\delta}}\right) \\
& \hspace{20pt} \cdot \big( \Prob( \{ \zeta'_n)_{n\geq i+k+l} \neq (\zeta_n)_{n\geq i+k+l} \}) + \Prob( \{ (\zeta''_n)_{n\leq i+l} \neq (\zeta_n)_{n\leq i+l} \}) \big) ^{\frac{\delta}{2(2+\delta)}} \\
&\leq 2 M^\frac{1}{2+\delta}(2\beta_k^{\frac{\delta}{2(2+\delta)}}) \\
& \leq C \beta_k^{\frac{\delta}{2(2+\delta)}},
\end{align*}
where we used property \eqref{Z} of the copied series $(\zeta'_n)_{n\in\mathbb{Z}}$, $(\zeta''_n)_{n\in\mathbb{Z}}$ for the second to last inequality. For (\ref{IB}), we split up again:
\begin{align*}
(\ref{IB}) & \leq \; \E[\Vert h_2(X_i,X_{i+k+2l})-h_2(X''_i,X'_{i+k+2l})\Vert^2 \textbf{1}_B \\
& \hspace{100pt} \textbf{1}_{\{ \Vert X_i-X''_i\Vert \leq 2\epsilon, \, \Vert X_{i+k+2l}-X'_{i+k+2l} \Vert \leq 2\epsilon \}}  ]^{\frac{1}{2}} \\
& \;\; +\E[\Vert h_2(X_i,X_{i+k+2l})-h_2(X''_i,X'_{i+k+2l})\Vert^2 \textbf{1}_B \\
& \hspace{100pt} \textbf{1}_{\{ \Vert X_i-X''_i\Vert > 2\epsilon\, \text{or} \, \Vert X_{i+k+2l}-X'_{i+k+2l} \Vert > 2\epsilon \}}  ]^{\frac{1}{2}}.
\end{align*}
For the first summand, we use variation condition. For the second, notice that on $B$:
\[\Vert X_i-X''_i \Vert \leq \Vert X_i- X_{i,l}\Vert + \Vert X_{i,l}-X''_i \Vert =\Vert X_i -X_{i,l} \Vert +\Vert X''_{i,l}-X''_i \Vert \]
and 
\begin{align*}
\Vert X_{i+k+2l}-X'_{i+k+2l} \Vert & \leq \Vert X_{i+k+2l}- X_{i+k+2l,l}\Vert + \Vert X_{i+k+2l,l}-X'_{i+k+2l} \Vert \\
& =\Vert X_{i+k+2l} -X_{i+k+2l,l} \Vert +\Vert X'_{i+k+2l,l}-X'_{i+k+2l} \Vert.
\end{align*}
So,
\begin{align*}
(\ref{IB}) & \leq \sqrt{L2\epsilon} \\
&  \hspace{10pt} + \E[\Vert h_2(X_i,X_{i+k+2l})-h_2(X''_i,X'_{i+k+2l})\Vert^2 \textbf{1}_{\{\Vert X_i-X_{i,l} \Vert > \epsilon  \}}]^{\frac{1}{2}} \\
& \hspace{10pt} +  \E[\Vert h_2(X_i,X_{i+k+2l})-h_2(X''_i,X'_{i+k+2l})\Vert^2 \textbf{1}_{\{\Vert X''_i-X''_{i,l} \Vert > \epsilon  \}}]^{\frac{1}{2}} \\
& \hspace{10pt} + \E[\Vert h_2(X_i,X_{i+k+2l})-h_2(X''_i,X'_{i+k+2l})\Vert^2 \textbf{1}_{\{\Vert X_{i+k+2l}-X_{i+k+2l,l} \Vert > \epsilon  \}}]^{\frac{1}{2}} \\
& \hspace{10pt} + \E[\Vert h_2(X_i,X_{i+k+2l})-h_2(X''_i,X'_{i+k+2l})\Vert^2 \textbf{1}_{\{\Vert X'_{i+k+2l}-X'_{i+k+2l,l} \Vert > \epsilon  \}}]^{\frac{1}{2}} \\
& \leq \sqrt{L2\epsilon}+ 4 \cdot 2M^{\frac{1}{2+\delta}}(\Prob(\Vert X_i-X_{i,l}\Vert > \epsilon))^{\frac{\delta}{2(2+\delta)}} \\
& \hspace{50pt}\text{by our moment assumptions and Hölder's inequality} \\
& \leq \sqrt{L2\epsilon} +4 \cdot 2M^{\frac{1}{2+\delta}} (a_l\Phi(\epsilon))^{\frac{\delta}{2(2+\delta)}} \;\;\; \text{since $(X_n)_{n\in\Z}$ is P-NED} \\
& \leq C\left( \sqrt{\epsilon}+(a_l\Phi(\epsilon))^{\frac{\delta}{2(2+\delta)}} \right)
\end{align*}
Combining the results for (\ref{IA}) and (\ref{IB}) we get 
\[(\ref{I}) \leq (\ref{IA})+(\ref{IB}) \leq C \left( \beta_k^{\frac{\delta}{2(2+\delta)}} +\sqrt{\epsilon}+(a_l\Phi(\epsilon))^{\frac{\delta}{2(2+\delta)}} \right).\]

We can now look at (\ref{II}). Again, we split the term into two summands, (similar as for (\ref{I})) we use the variation condition for the first and Hölder's inequality for the second summand:
\begin{align*}
(\ref{II}) & = \E[\Vert h_2(X''_i, X'_{i+k+2l})-h_2(X''_{i,l}, X'_{i+k+2l,l}) \Vert^2  ]^{\frac{1}{2}} \\
& \leq \;\;\;\; \E[\Vert h_2(X''_i, X'_{i+k+2l})-h_2(X''_{i,l}, X'_{i+k+2l,l}) \Vert^2 \\
& \hspace{100pt}\textbf{1}_{\{ \Vert X''_i-X''_{i,l}\Vert \leq \epsilon,\, \Vert X'_{i+k+2l}-X'_{i+k+2l,l} \Vert \leq\epsilon\}} ]^{\frac{1}{2}} \\
& \hspace{11pt} + \E[\Vert h_2(X''_i, X'_{i+k+2l})-h_2(X''_{i,l}, X'_{i+k+2l,l}) \Vert^2 \\
& \hspace{100pt} \textbf{1}_{\{ \Vert X''_i-X''_{i,l}\Vert > \epsilon\, \text{or} \, \Vert X'_{i+k+2l}-X'_{i+k+2l,l} \Vert > \epsilon\}} ]^{\frac{1}{2}} \\
& \leq \sqrt{L\epsilon} + \left( \E[\Vert h_2(X''_i,X'_{i+k+2l}) \Vert ^{2+\delta}]^{\frac{1}{2+\delta}} +\E[\Vert h_2(X''_{i,l},X'_{i+k+2l,l})\Vert^{2+\delta}]^{\frac{1}{2+\delta}}\right) \\
& \hspace{50pt} \cdot \left(\Prob(\Vert X''_i-X''_{i,l}\Vert > \epsilon) + \Prob(\Vert X'_{i+k+2l}-X'_{i+k+2l,l} \Vert > \epsilon) \right)^{\frac{\delta}{2(2+\delta)}} \\
& \leq \sqrt{L\epsilon} + 2M^{\frac{1}{2+\delta}}(2a_l\Phi(\epsilon))^{\frac{\delta}{2(2+\delta)}} \;\; \text{since $(X_n)_{n \in \mathbb{Z}}$ is P-NED} \\
& \leq C\left(\sqrt{\epsilon} + (a_l\Phi(\epsilon))^{\frac{\delta}{2(2+\delta)}}\right)
\end{align*}
Lastly, we split up (\ref{III}) as well:
\begin{align*}
(\ref{III}) & = \E[ \Vert h_2(X''_{i,l},X'_{i+k+2l,l})-h_2(X_{i,l},X_{i+k+2l,l})\Vert^2]^{\frac{1}{2}}  \\
& \leq \;\;\; \E[ \Vert h_2(X''_{i,l},X'_{i+k+2l,l})-h_2(X_{i,l},X_{i+k+2l,l})\Vert^2 \textbf{1}_{B^c}]^{\frac{1}{2}} \\
& \hspace{11pt} + \E[ \Vert h_2(X''_{i,l},X'_{i+k+2l,l})-h_2(X_{i,l},X_{i+k+2l,l})\Vert^2 \textbf{1}_{B}]^{\frac{1}{2}}.
\end{align*}
Since on $B$ it is $X_{i+k+2l,l}=X'_{i+k+2l,l}$ and $X_{i,l}=X''_{i,l}$, the second summand equals zero. For the first summand, we use Hölder's inequality again and the properties of $(\zeta'_n)_{n\leq i+l}$, $(\zeta''_n)_{n\leq i+l}$, see (\ref{Z}): 
\begin{align*}
(\ref{III}) & \leq 2M^{\frac{1}{2+\delta}} \big(\Prob(\{(\zeta'_n)_{n \geq i+k+l} \neq (\zeta_n)_{n \geq i+k+l}  \}) \!+\! \Prob( \{(\zeta''_n)_{n\leq i+l} \neq (\zeta_n)_{n\leq i+l}  \})  \big)^{\frac{\delta}{2(2+\delta)}} \\
&\leq 2M^{\frac{1}{2+\delta}}(2\beta_k)^{\frac{\delta}{2(2+\delta)}}  \leq C \beta_k^{\frac{\delta}{2(2+\delta)}} 
\end{align*}
We can finally put everything together: 
\begin{align*}
& \E[\Vert h_2(X_i,X_{i+k+2l})-h_2(X_{i,l}, X_{i+k+2l,l})\Vert^2]^{\frac{1}{2}} \leq (\ref{I})+(\ref{II})+(\ref{III}) \\
& \leq C \left(\beta_k^{\frac{\delta}{2(2+\delta)}}+\sqrt{\epsilon}+(a_l\Phi(\epsilon))^{\frac{\delta}{2(2+\delta)}}\right) + C\left(\sqrt{\epsilon}+ (a_l\Phi(\epsilon))^{\frac{\delta}{2(2+\delta)}} \right) + C \beta_k^{\frac{\delta}{2(2+\delta)}} \\
& \leq C \left( \sqrt{\epsilon} + \beta_k^{\frac{\delta}{2(2+\delta)}} + (a_l\Phi(\epsilon))^{\frac{\delta}{2(2+\delta)}}\right)
\end{align*}
\end{proof}

\begin{lem} \label{P2}
Under the assumptions of Theorem \ref{theo1} it holds for any $n_1 < n_2 < n_3 < n_4$ and $l= \left \lfloor{n_4^{\frac{3}{16}}}\right\rfloor $:
\[\E\bigg[\Big(\sum_{n_1 \leq i \leq n_2}\sum_{n_3 \leq j \leq n_4} \Vert h_2(X_i,X_j)-h_2(X_{i,l},X_{j,l})\Vert \Big)^2 \bigg]\wur \leq C(n_4-n_3)n_4^{\frac{1}{4}} \] 
\end{lem}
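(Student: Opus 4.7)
My strategy is to apply Minkowski's inequality in $L^2$ to pull the sum outside the norm, then to split the double sum into a ``large-gap'' piece where Lemma \ref{P1} applies and a ``small-gap'' piece where a crude moment bound suffices, and finally to calibrate the parameter $\epsilon$ in Lemma \ref{P1} so that all contributions combine to the target rate.

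First I would use the triangle inequality for the $L^2$-norm to bound the left-hand side by
\[
\sum_{i=n_1}^{n_2}\sum_{j=n_3}^{n_4}\E\big[\|h_2(X_i,X_j)-h_2(X_{i,l},X_{j,l})\|^2\big]^{1/2}.
\]
Then I would split the pairs $(i,j)$ according to whether $j-i\geq 2l$ or $j-i<2l$. For the small-gap pairs I would bound each summand by a constant using the triangle inequality together with the uniform $(4+\delta)$-moment assumption, which is inherited by $h_2$ from $h$ via Hoeffding's decomposition and Jensen's inequality. The number of such pairs is at most $2l(n_4-n_3+1)$, so their contribution is of order $l(n_4-n_3)\leq n_4^{3/16}(n_4-n_3)$, already safely below the target.

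For the large-gap pairs I would apply Lemma \ref{P1} with $k:=j-i-2l$ and $\epsilon:=\epsilon_l:=l^{-8(\delta+3)/\delta}$. This choice is tailored to the P-NED rate assumption $a_l\Phi(l^{-8(\delta+3)/\delta})=\mathcal{O}(l^{-8(\delta+3)(\delta+2)/\delta^2})$: it makes both $\sqrt{\epsilon_l}$ and $(a_l\Phi(\epsilon_l))^{\delta/(2(2+\delta))}$ of order $l^{-4(\delta+3)/\delta}$. With $l=\lfloor n_4^{3/16}\rfloor$ this is of order $n_4^{-3(\delta+3)/(4\delta)}$. Multiplying by the number of pairs and using $n_2-n_1+1\leq n_4$ (since $n_2<n_3<n_4$) together with the elementary inequality $1-3(\delta+3)/(4\delta)=(\delta-9)/(4\delta)\leq 1/4$ valid for every $\delta>0$, I get a contribution of at most $C(n_4-n_3)n_4^{1/4}$. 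For the remaining summand $\beta_{j-i-2l}^{\delta/(2(2+\delta))}$ from Lemma \ref{P1}, I would fix $j$ and sum over $i$: the result is bounded by $\sum_{k\geq 0}\beta_k^{\delta/(2(2+\delta))}$, which converges because the assumption $\sum_k k^2\beta_k^{\delta/(4+\delta)}<\infty$ combined with the monotonicity of $(\beta_k)$ yields $\beta_k=o(k^{-3(4+\delta)/\delta})$, hence $\beta_k^{\delta/(2(2+\delta))}=o(k^{-3(4+\delta)/(4+2\delta)})$ with exponent strictly above $3/2$. A further summation over $j$ adds only a factor $(n_4-n_3)$, which is again absorbed into $(n_4-n_3)n_4^{1/4}$.

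The main obstacle, and the reason for the unusual exponent $3/16$ in the statement, is the balancing of parameters: $\epsilon_l$ must be chosen so that the two error terms $\sqrt{\epsilon_l}$ and $(a_l\Phi(\epsilon_l))^{\delta/(2(2+\delta))}$ in Lemma \ref{P1} are simultaneously of the same order and decay fast enough in $l$ that, after multiplication by the worst-case pair count $n_4(n_4-n_3)$, the total remains within $(n_4-n_3)n_4^{1/4}$. The choice $l\asymp n_4^{3/16}$ is precisely what makes the small-gap contribution $l(n_4-n_3)$ and the large-gap per-pair bound of order $n_4^{-3/4}$ fit simultaneously under the target rate.
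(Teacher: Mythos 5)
Your proposal is correct and follows essentially the same route as the paper's proof: Minkowski's inequality in $L^2$, a per-pair application of Lemma \ref{P1} with the same choice $\epsilon=l^{-8(3+\delta)/\delta}$, a separate crude count for the at most $O(l)$ small-gap pairs per $j$ (the paper handles these by the convention $\beta_k=1$ for $k<0$), and summation of the mixing terms over the gap. Your treatment of the sum $\sum_k\beta_k^{\delta/(2(2+\delta))}$ — deriving $\beta_k=o(k^{-3(4+\delta)/\delta})$ from summability plus monotonicity before raising to the power — is in fact slightly more careful than the paper's, which passes from the exponent $\delta/(2(2+\delta))$ to $\delta/(4+\delta)$ even though that inequality points the wrong way for $\beta_k\leq 1$.
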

\begin{proof} The important step of the proof is to bound the left hand side expectation from above by a sum of $\E[\| h_2(X_i,X_j) - h_2(X_{i,l},Y_{j,l} )\|^2]^{1/2}$ terms. We can then use Lemma \ref{P1} to achieve the stated approximation.
First note that 
\begin{align*}
 & E\bigg[\Big(\sum_{n_1 \leq i \leq n_2}\sum_{n_3 \leq j \leq n_4} \Vert h_2(X_i,X_j)-h_2(X_{i,l},X_{j,l})\Vert \Big)^2 \bigg]\wur \\
 & \leq E\bigg[\Big(\sum_{1 \leq i \leq j-1}\sum_{n_3 \leq j \leq n_4} \Vert h_2(X_i,X_j)-h_2(X_{i,l},X_{j,l})\Vert \Big)^2 \bigg]\wur.
\end{align*}
For any fixed $j$ it is
\[ \E\bigg[ \sum_{1\leq i < j} \Vert h_2(X_i,X_j) \Vert \bigg] = \E\bigg[ \sum_{k=1}^{j-1} \Vert h_2(X_{j-k},X_j) \Vert \bigg] \leq \E\bigg[ \sum_{k=1}^{n_4} \Vert h_2(X_{j-k},X_j) \Vert \bigg]  .\]
And for $j$ there are at most $(n_4-n_3)$ possibilities. So
\[ \E \bigg[ \sum_{n_3 \leq j \leq n_4} \sum_{1 \leq i < j} \Vert h_2(X_i,X_j) \Vert \bigg] \leq (n_4-n_3)  \E \bigg[ \sum_{k=1}^{n_4} \Vert h_2(X_{j-k},X_j) \Vert \bigg]  .\]
The analog holds for $h_2(X_{i,l},X_{j,l})$. Thus,
\begin{align}
& \E\bigg[\Big(\sum_{1 \leq i < j, n_3\leq j \leq n_4} \Vert h_2(X_i,X_j)-h_2(X_{i,l},X_{j,l})\Vert \Big)^2 \bigg]\wur \nonumber\\
& \leq \sum_{n_3\leq j\leq n_4} \sum_{1\leq i<j} \E[ \Vert h_2(X_i,X_j)-h_2(X_{i,l},X_{j,l})\Vert \nonumber ^2]\wur \nonumber\\
& \leq (n_4-n_3) \sum_{k=1}^{n_4} \E[\Vert h_2(X_{j-k},X_j)-h_2(X_{j-k,l},X_{j,l}) \Vert^2  ]\wur \nonumber \\
& \leq (n_4-n_3) \sum_{k=1}^{n_4} C \left( \sqrt{\epsilon} +\beta_{k-2l}^{\frac{\delta}{2(2+\delta)}} + (a_l\Phi(\epsilon))\del  \right) \;\; \text{by Lemma \ref{P1}.} \label{3}
\end{align}
Now set $\epsilon=l^{-8\frac{3+\delta}{\delta}}$ and define $\beta_k=1$ if $k<0$. Then by our assumptions on the approximation constants and the mixing coefficients 
\begin{align*}
(\ref{3})& = C(n_4-n_3)\sum_{k=1}^{n_4} \left( l^{-8\frac{3+\delta}{\delta}\frac{1}{2}}+\beta_{k-2l}\del +(a_l\Phi(l^{-8\frac{3+\delta}{\delta}}))\del \right) \\
& \leq C(n_4-n_3) \sum_{k=1}^{n_4} \left( l^{-4\frac{3+\delta}{\delta}}+\beta_{k-2l}\del +l^{-4\frac{3+\delta}{\delta}} \right) \\
& \leq C (n_4-n_3) \Big( \sum_{k=1}^{n_4} l^{-4} + \sum_{k=1}^{2l-1} \underbrace{\beta_{k-2l}^{\frac{\delta}{4+\delta}}}_{=1} +\sum_{k=2l}^{n_4} \beta_{k-2l}^{\frac{\delta}{4+\delta}} \Big) \\
& \leq C (n_4-n_3) \Big(n_4 l^{-4}+2l + \underbrace{\sum_{k=2l}^{n_4} (k-2l)^2 \beta_{k-2l}^{\frac{\delta}{4+\delta}}}_{< \infty } \Big)\\
&\leq C (n_4-n_3) n_4^{\frac{1}{4}}.
\end{align*}
So the statement of the lemma is proven.
\end{proof}

\begin{lem}\label{P3}
Under the assumptions of Theorem \ref{theo1}, it holds for any $n_1<n_2<n_3<n_4$ and $l= \left \lfloor{n_4^{\frac{3}{16}}}\right\rfloor $: 
\[ \E\bigg[\Big( \sum_{n_1\leq i \leq n_2,\, n_3\leq j\leq n_4} \Vert h_{2,l}(X_{i,l},X_{j,l})-h_2(X_{i,l},X_{j,l}) \Vert \Big)^2\bigg]\wur \leq C (n_4-n_3) n_4^{\frac{1}{4}}  \]
where $h_{2,l}(x,y)= h(x,y)-\E[h(x,\tilde{X}_{j,l})]-\E[h(\tilde{X}_{i,l},y)]\;\;\; \forall i,j,\in \mathbb{N}$ and $\tilde{X}_{i,l}=f_l(\tilde{\zeta}_{i-l},...,\tilde{\zeta}_{i+l})$, where $(\tilde{\zeta}_n)_{n\in\mathbb{\zeta}}$ is an independent copy of  $(\zeta_n)_{n\in\mathbb{\zeta}}$.
\end{lem}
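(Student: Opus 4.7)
The plan is to exploit the algebraic form of $h_{2,l}-h_2$. Using antisymmetry of $h$, the expectations defining $h_{2,l}$ simplify to $\E[h(x,\tilde X_{j,l})]=h_{1,l}(x)$ and $\E[h(\tilde X_{i,l},y)]=-h_{1,l}(y)$, where $h_{1,l}(z):=\E[h(z,\tilde X_{0,l})]$. Combining this with the identity $h_2(x,y)=h(x,y)-h_1(x)+h_1(y)$ yields the clean factorisation
\[
h_{2,l}(X_{i,l},X_{j,l})-h_2(X_{i,l},X_{j,l})=\phi_l(X_{i,l})-\phi_l(X_{j,l}),\qquad \phi_l:=h_1-h_{1,l}.
\]
The difference thus reduces to a sum of single-index terms, so no joint dependence between $i$ and $j$ has to be tracked.

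Applying Minkowski's inequality twice---once to pull the $L^2$ norm past the double sum, once to split $\|\phi_l(X_{i,l})-\phi_l(X_{j,l})\|$ into two pieces---and using stationarity of $(X_{i,l})_i$ bounds the left-hand side by $2(n_2-n_1+1)(n_4-n_3+1)\|\phi_l(X_{0,l})\|_{L^2}$. Since $n_2<n_4$ and $l=\lfloor n_4^{3/16}\rfloor$, the whole problem reduces to showing $\|\phi_l(X_{0,l})\|_{L^2}\leq C l^{-4}$, which is precisely the rate $n_4^{-3/4}$ needed to convert the factor $n_4(n_4-n_3)$ into $(n_4-n_3)n_4^{1/4}$.

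To obtain this rate I would follow the strategy used in Lemma \ref{linLem}. Let $(\tilde\zeta_n)_n$ be an independent copy of $(\zeta_n)_n$ and set $X^*:=f((\tilde\zeta_n)_n)$, $X_{0,l}^*:=f_l(\tilde\zeta_{-l},\dots,\tilde\zeta_l)$, so that $\Prob(\|X^*-X_{0,l}^*\|>\epsilon)\leq a_l\Phi(\epsilon)$ by P-NED and both are independent of $X_{0,l}$. Conditional Jensen applied to $\phi_l(X_{0,l})=\E[h(X_{0,l},X^*)-h(X_{0,l},X_{0,l}^*)\mid X_{0,l}]$ reduces the task to bounding $\E[\|h(X_{0,l},X^*)-h(X_{0,l},X_{0,l}^*)\|^2]$. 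Inserting $h(X_0,X^*)$ and $h(X_0,X_{0,l}^*)$ splits this into three summands, each of which involves only one argument being perturbed through a P-NED coupling. For each summand, splitting on $\{\|\cdot\|\leq\epsilon\}$ and its complement, the variation condition (with one slot frozen) contributes order $\epsilon$ on the good event, while Hölder with uniform $(4+\delta)$-moments and P-NED contribute order $(a_l\Phi(\epsilon))^{\delta/(2+\delta)}$ on the bad event. The balancing choice $\epsilon=l^{-8(3+\delta)/\delta}$ yields $\|\phi_l(X_{0,l})\|_{L^2}\leq C l^{-4(3+\delta)/\delta}\leq C l^{-4}$ as required.

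The main obstacle is the very last step, because the variation condition is formulated for the true pair $(X_0,\tilde X_0)$ whereas the relevant arguments here are the $l$-approximations $X_{0,l}$ and $X_{0,l}^*$. The add-and-subtract device is what resolves this: each of the three resulting comparisons has one slot occupied by the genuine $X_0$ (or $X^*$) while the other is within $\epsilon$ of it through the P-NED coupling, so a one-sided weakening of the variation condition applies. Everything else---the calibration of $\epsilon$, the use of uniform moments, and the substitution $l=\lfloor n_4^{3/16}\rfloor$---follows the template already laid out in Lemma \ref{linLem}.
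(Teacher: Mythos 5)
Your proposal is correct and takes essentially the same route as the paper: your factorisation $h_{2,l}(x,y)-h_2(x,y)=\phi_l(x)-\phi_l(y)$ with $\phi_l=h_1-h_{1,l}$ is precisely what the paper's two summands reduce to after Jensen's inequality, and the per-pair $L^2$ bound (variation condition on the good event, H\"older with the uniform moments and P-NED on the complement, calibrated at $\epsilon=l^{-8(3+\delta)/\delta}$) followed by Minkowski over the at most $n_4(n_4-n_3)$ pairs is the paper's argument verbatim. Naming $\phi_l$ explicitly is a tidy cosmetic improvement that makes the connection to Lemma~\ref{linLem} transparent, but the analytic content is identical.
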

\begin{proof}
For $(\tilde{\zeta}_n)_{n\in\mathbb{Z}}$ an independent copy of $(\zeta_n)_{n\in\mathbb{\zeta}}$, write $\tilde{X}_i=f((\tilde{\zeta}_{i+n})_{n\in\mathbb{Z}})$. So $(\tilde{X}_i)_{i\in\mathbb{Z}}$ is an independent copy of $\Xn$. We will use Hoeffding's decomposition and rewrite $h_2$ as $h_2(x,y)=h(x,y)-\E[h(x,\tilde{X}_j)]-\E[h(\tilde{X}_i,y)]$ and similarly for $h_{2,l}$. By doing so, we obtain
\begin{align}
& \E[\Vert h_{2,l}(X_{i,l},X_{j,l})-h_2(X_{i,l},X_{j,l}) \Vert^2]\wur \nonumber\\
& = \E[\Vert \;\; h(X_{i,l},X_{j,l})- \E_{\tilde{X}}[h(X_{i,l},\tilde{X}_{j,l})]- \E_{\tilde{X}}[h(\tilde{X}_{i,l},X_{j,l})] \nonumber\\
& \hspace{20pt} - h(X_{i,l},X_{j,l})+ \E_{\tilde{X}}[h(X_{i,l},\tilde{X}_j)]+\E_{\tilde{X}}[h(\tilde{X}_i,X_{j,l})] \Vert^2 ]\wur \nonumber \\
& \leq \;\; \E[\Vert h(X_{i,l},\tilde{X}_{j,l})-h(X_{i,l},\tilde{X}_j) \Vert^2 ]\wur \label{2I} \\
&  \hspace{11pt} +\E[\Vert h(\tilde{X}_{i,l},X_{j,l})-h(\tilde{X}_i,X_{j,l}) \Vert^2]\wur .\label{2II}
\end{align}
Here $\E_{\tilde{X}}$ denotes the expectation with respect to $\tilde{X}$, $\E=\E_{X,\tilde{X}}$ is the expectation with respect to $X$ and $\tilde{X}$.
We bound the two terms separately, starting with $(\ref{2II})$:
\begin{align*}
&\E[\Vert h(\tilde{X}_{i,l},X_{j,l})-h(\tilde{X}_i,X_{j,l}) \Vert^2]\wur \\
& \leq \E[\Vert h(\tilde{X}_{i,l},X_{j,l})-h(\tilde{X}_i,X_j) \Vert^2  ]\wur \tag{\ref{2II}.A}  \label{2II.A} \\
& + \E[\Vert h(\tilde{X}_i,X_{j,l})-h(\tilde{X}_i,X_j)\Vert^2]\wur \tag{\ref{2II}.B} \label{2II.B}
\end{align*}
Now, for the first summand, we obtain
\begin{align*}
(\ref{2II.A})& = \E[\Vert h(\tilde{X}_{i,l},X_{j,l})-h(\tilde{X}_i,X_j) \Vert^2  \textbf{1}_{\{\Vert \tilde{X}_i-\tilde{X}_{i,l}\Vert\leq\epsilon,\; \Vert X_j-X_{j,l}\Vert \leq \epsilon   \}}]\wur \\
& + \E[\Vert h(\tilde{X}_{i,l},X_{j,l})-h(\tilde{X}_i,X_j) \Vert^2  \textbf{1}_{\{\Vert \tilde{X}_i-\tilde{X}_{i,l}\Vert > \epsilon \, \text{or} \, \Vert X_j-X_{j,l}\Vert > \epsilon   \}}]\wur \\
& \leq \sqrt{L\epsilon}+\E[\Vert h(\tilde{X}_{i,l},X_{j,l})-h(\tilde{X}_i,X_j)\Vert^{2+\delta}  ]^{\frac{1}{2+\delta}} \\
& \hspace{70pt}\cdot \left(\Prob(\Vert \tilde{X}_i-\tilde{X}_{i,l} \Vert > \epsilon )+\Prob(\Vert X_j-X_{j,l} \Vert > \epsilon)\right)
\end{align*}
by using the variation condition for the first summand and Hölder's inequality for the second.  By our moment and  P-NED assumptions
\begin{equation*}
(\ref{2II.A}) \leq \sqrt{L\epsilon} + 2M^{\frac{1}{2+\delta}}(2a_l\Phi(\epsilon))\leq C\left(\sqrt{\epsilon}+ (2a_l\Phi(\epsilon))\del\right).
\end{equation*}
For $(\ref{2II.B})$ we use similar arguments:
\begin{align*}
(\ref{2II.B}) & \leq \E[\Vert h(\tilde{X}_i,X_{j,l})-h(\tilde{X}_i,X_j)\Vert^2 \textbf{1}_{\{ \Vert X_j-X_{j,l}\Vert > \epsilon\}}]\wur \\
& + \E[\Vert h(\tilde{X}_i,X_{j,l})-h(\tilde{X}_i,X_j)\Vert^2 \textbf{1}_{\{ \Vert X_j-X_{j,l}\Vert \leq \epsilon\}}]\wur \\
& \leq \E[\Vert h(\tilde{X}_i,X_{j,l})-h(\tilde{X}_i,X_j)\Vert ^{2+\delta}]^{\frac{1}{2+\delta}} \cdot \Prob(\Vert X_j-X_{j,l}\Vert > \epsilon)\del + \sqrt{L\epsilon} \\
& \leq 2M^{\frac{1}{2+\delta}}(a_l\Phi(\epsilon))\del +\sqrt{L\epsilon} \leq C\left( \sqrt{\epsilon} + (a_l\Phi(\epsilon))\del \right)
\end{align*}
Putting these two terms together, we get
\begin{align*}
(\ref{2II})  \leq C\left( \sqrt{\epsilon} + (a_l\Phi(\epsilon))\del \right).
\end{align*}
Bounding $(\ref{2I})$ works completely analogous, just with $i$ and $j$  interchanged, so
\[(\ref{2I}) \leq \left( \sqrt{\epsilon} + (a_l\Phi(\epsilon))\del \right).  \]
All together this yields
\begin{align*}
\E[\Vert h_{2,l}(X_{i,l},X_{j,l})-h_2(X_{i,l},X_{j,l}) \Vert^2]\wur \leq (\ref{2I})+(\ref{2II}) \leq C\left( \sqrt{\epsilon} + (a_l\Phi(\epsilon))\del \right) .\\
\end{align*}
So we finally get that
\begingroup
\allowdisplaybreaks
\begin{align*}
& \E\left[\left( \sum_{n_1\leq i\leq n_2,\; n_3\leq j\leq n_4} \Vert h_{2,l}(X_{i,l},X_{j,l})-h_2(X_{i,l},X_{j,l}) \Vert \right)^2\right]\wur \\
& \leq \E\left[\left( \sum_{1\leq i<j,\; n_3\leq j\leq n_4} \Vert h_{2,l}(X_{i,l},X_{j,l})-h_2(X_{i,l},X_{j,l}) \Vert \right)^2\right]\wur \\
& \leq \sum_{1\leq i<j,\; n_3\leq j\leq n_4} \E[\Vert h_{2,l}(X_{i,l},X_{j,l})-h_2(X_{i,l},X_{j,l}) \Vert ^2]\wur  \\
& \leq \sum_{1\leq i<j,\; n_3\leq j\leq n_4} C\left( \sqrt{\epsilon} + (a_l\Phi(\epsilon))\del \right)\\ 
& \leq C (n_4-n_3)\sum_{k=1}^{n_4}\left(\sqrt{\epsilon}+(a_l\Phi(\epsilon))\del \right) \leq C (n_4-n_3)n_4^{\frac{1}{4}} 
\end{align*}
where the last line is achieved by setting $\epsilon=l^{-8\frac{3+\delta}{\delta}}$ and similar calculations as in Lemma \ref{P2}.
\endgroup
\end{proof}

\begin{lem}\label{P4}
Under the assumptions of Theorem \ref{theo1}, it holds for any $n_1<n_2 <n_3 < n_4$ and $l= \left \lfloor{n_4^{\frac{3}{16}}}\right\rfloor $:
\[ \E\bigg[\Big( \big\| \sum_{n_1\leq i \leq n_2,\, n_3\leq j\leq n_4} h_{2,l}(X_{i,l},X_{j,l})\big\| \Big)^2  \bigg] \leq C (n_4-n_3)n_4^{\frac{3}{2}}. \]
For the definition of $h_{2,l}$, see Lemma \ref{P3}.
\end{lem}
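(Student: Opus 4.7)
The plan is to expand the squared norm as a fourfold sum and exploit the degeneracy of $h_{2,l}$ together with absolute regularity. Writing
\[
\E\bigg\|\sum_{i,j} h_{2,l}(X_{i,l},X_{j,l})\bigg\|^2 = \sum_{i_1,i_2,j_1,j_2} T(i_1,i_2,j_1,j_2),
\]
with $T(i_1,i_2,j_1,j_2):=\E\langle h_{2,l}(X_{i_1,l},X_{j_1,l}),h_{2,l}(X_{i_2,l},X_{j_2,l})\rangle$ and indices running over $[n_1,n_2]\times[n_3,n_4]$, the crucial structural fact is that $h_{2,l}$ is degenerate in both arguments: because $\tilde{X}_{j,l}$ is constructed from an independent copy of $(\zeta_n)_{n\in\Z}$ it has the marginal law of $X_{0,l}$, and together with antisymmetry of $h$ this yields $\E[h_{2,l}(x,\tilde{X}_{0,l})]=0=\E[h_{2,l}(\tilde{X}_{0,l},y)]$ for any fixed $x,y\in H$ and any $\tilde{X}_{0,l}$ independent of them with the marginal law of $X_{0,l}$.

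The main step is the per-quadruple bound. Sort the four indices in increasing order and let $m$ be the largest consecutive gap. Since $X_{i,l}$ is $\sigma(\zeta_{i-l},\ldots,\zeta_{i+l})$-measurable, when $m>2l$ the sorted quadruple splits into two disjoint blocks of $\zeta$-indices at distance at least $m-2l$. The coupling lemma D1 of \cite{DVWW17} (already exploited in Lemma \ref{P1}) then replaces the variables on one side by an independent copy agreeing with the original except on an event of probability at most $2\beta_{m-2l}$. Combined with the uniform $(4+\delta)$-moment assumption on $h$ (which transfers to $h_{2,l}$ by Minkowski) and H\"older's inequality, the coupling error is bounded by $C\beta_{m-2l}^{(2+\delta)/(4+\delta)}$. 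After this replacement, the degeneracy of $h_{2,l}$ annihilates the resulting inner product: when the maximum gap isolates a single variable (the cases $g_1$ or $g_3$ maximum), a Fubini step over that variable gives zero directly; when instead the inter-group gap $g_2$ is maximum, one decouples both pairs separately and then applies degeneracy in a single argument, in the spirit of the Fubini computation in Lemma \ref{P3}. Altogether,
\[
|T(i_1,i_2,j_1,j_2)|\le C\beta_{(m-2l)_+}^{(2+\delta)/(4+\delta)} + C\mathbf{1}\{m\le 2l\}.
\]

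The conclusion then follows by a combinatorial case analysis that counts ordered quadruples according to which of the three gaps is maximal, combined with the summability assumption $\sum_k k^2\beta_k^{\delta/(4+\delta)}<\infty$ (which, since $\beta_k\le 1$ and $(2+\delta)/(4+\delta)>\delta/(4+\delta)$, upgrades to $\sum_k k^2\beta_k^{(2+\delta)/(4+\delta)}<\infty$). Separating the two regimes $n_4-n_3\ge 2l$ and $n_4-n_3<2l$, and using $n_2-n_1\le n_4$ and $l=\lfloor n_4^{3/16}\rfloor$, one verifies that both the clumped contributions from $k\le 2l$ (which scale as $l^3$ or $l^4$ with appropriate linear prefactors) and the spread contributions from $k>2l$ (controlled by the summability assumption after an index shift $k'=k-2l$) are absorbed in the target $C(n_4-n_3)n_4^{3/2}$.

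The main obstacle lies in the inter-group case of the per-quadruple bound, where no single variable is isolated and the degeneracy argument requires sequentially decoupling both pairs before invoking degeneracy in a single argument, much as in Lemma \ref{P3}. A secondary difficulty is the regime-dependent bookkeeping in the fourfold combinatorial count, which is necessary to keep the final bound linear in $n_4-n_3$.
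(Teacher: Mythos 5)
Your overall architecture (expand the square, prove a per\-/quadruple bound via degeneracy plus decoupling, then count) is the same as the paper's, which separates the exact diagonal $i_1=i_2,\ j_1=j_2$ and then isolates the single index $i_1$ via Yoshihara's covariance inequality before invoking degeneracy. However, your central displayed estimate
\[
|T(i_1,i_2,j_1,j_2)|\le C\beta_{(m-2l)_+}^{(2+\delta)/(4+\delta)}+C\mathbf{1}\{m\le 2l\},\qquad m=\text{largest consecutive gap},
\]
is false, and it fails precisely in the case you flag as the main obstacle. Take $i_1=i_2=i$ and $j_1=j_2=j$ with $j-i>2l$ (possible whenever $n_3-n_2>2l$): then $m$ is the inter-group gap, the indicator vanishes, and your bound would force $T=\E\bigl[\|h_{2,l}(X_{i,l},X_{j,l})\|^2\bigr]$ to be of order $\beta_{j-i-2l}$, i.e.\ arbitrarily small, which is not true in general. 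The same failure occurs off the diagonal whenever $|i_1-i_2|$ and $|j_1-j_2|$ are small but the inter-group gap is large: after decoupling the $i$-block from the $j$-block you are left with $\E\langle h_{2,l}(X_{i_1,l},X'_{j_1,l}),h_{2,l}(X_{i_2,l},X'_{j_2,l})\rangle$, and degeneracy gives nothing, because no single variable is (conditionally) independent of the other three --- conditioning on $X_{i_1,l},X_{i_2,l}$ leaves a covariance of two dependent functionals of $X'_{j_1,l}$ and $X'_{j_2,l}$, which is generically of order one. ``Decoupling both pairs separately'' does not rescue the claimed rate: breaking the dependence inside a block costs a power of $\beta_{(|i_1-i_2|-2l)_+}$ or of $\beta_{(|j_1-j_2|-2l)_+}$, and these within-block gaps are \emph{at most} $m$, so you cannot conclude a bound in terms of $\beta_{(m-2l)_+}$.

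The lemma, and your counting scheme, survive once the decay parameter is corrected: pull out the exact diagonal (it contributes at most $(n_2-n_1)(n_4-n_3)M\le n_4(n_4-n_3)M$), and for the remaining quadruples isolate the smaller $i$-index or the larger $j$-index, whichever sits across the larger within-block gap. This costs a power of $\beta_{(k-2l)_+}$ with $k=\max(|i_1-i_2|,|j_1-j_2|)$, after which degeneracy in the first (resp.\ second) argument kills the decoupled expectation, so $|T|\le C\beta_{(k-2l)_+}^{\gamma}+C\mathbf{1}\{k\le 2l\}$ for a suitable $\gamma>0$. The quadruples with $k\le 2l$ number at most $Cn_4(n_4-n_3)l^2\le C(n_4-n_3)n_4^{11/8}$, and those with $k>2l$ are handled by the summability of $k\beta_k^{\gamma}$ exactly as in your plan, so the target $C(n_4-n_3)n_4^{3/2}$ is still reached. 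In short: the idea is right, but the quantity governing the decay is the larger \emph{within-block} gap, not the maximum consecutive gap, and the diagonal must be treated separately rather than absorbed into the coupling bound.
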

\begin{proof}
In this proof, we want to use Lemma 1 \cite{Y76}, which is the following: \newline
\textit{Let $g(x_1,...,x_k)$ be a Borel function. For any $0 \leq j \leq k-1$ with}
\[ \mathit{ \E[\vert g(X_{I,l}, X'_{I^C,l}) \vert ^{1 +\tilde{\delta}} ] \leq M } \tag{$\Diamond$} \label{YCond} \] \textit{ for some $\tilde{\delta} > 0$, where $I = \{i_1,...,i_j\}$, $I^C = \{i_{j+1},...,i_k\}$ and $X'$ an independent copy of $X$, it holds that}
\[ \left\vert \E[g(X_{i_1,l},...,X_{i_k,l})]- \E[g(X_{I,l}, X'_{I^C,l})] \right\vert \leq 4 M^{1/(1+\tilde{\delta})} \beta_{(i_{j+1}-i_j)-2l}^{\tilde{\delta}/(1+\tilde{\delta})}.  \tag{Y} \label{YState} \]

Now, for the proof of the lemma, first observe that we can rewrite the squared norm as the scalar product and thus:
\begin{align}
& \E[\Vert\sum_{n_1\leq i\leq n_2,\, n_3\leq j\leq n_4} h_{2,l}(X_{i,l},X_{j,l})\Vert ^2] \nonumber \\
& = \E[\langle \sum_{n_1\leq i\leq n_2,\, n_3\leq j\leq n_4} h_{2,l}(X_{i,l},X_{j,l}), \sum_{n_1\leq i\leq n_2,\, n_3 \leq j\leq n_4} h_{2,l}(X_{i,l},X_{j,l})\rangle ] \nonumber\\
& = \underset{(i_1 \neq i_2) \,\text{or}\, (j_1 \neq j_2) \,\text{or both} }{\sum_{n_1\leq i_1 \leq n_2,\, n_3 \leq j_1\leq n_4} \sum_{n_1\leq i_2 \leq n_2,\, n_3 \leq j_2\leq n_4}} \E[\langle  h_{2,l}(X_{i_1,l},X_{j_1,l}),  h_{2,l}(X_{i_2,l},X_{j_2,l}) \rangle ] \label{Y_A}\\
& \hspace{15pt}+ \sum_{n_1\leq i \leq n_2,\, n_3 \leq j\leq n_4} \E[\langle h_{2,l}(X_{i,l},X_{j,l}), h_{2,l}(X_{i,l},X_{j,l}) \rangle]  \label{Y_B}
\end{align}

We know by the uniform moments under approximation that (\ref{Y_B}) is bounded by the following: 
\begin{align*}
 (\ref{Y_B})& = \sum_{n_1\leq i \leq n_2,\, n_3 \leq j\leq n_4} \E[\Vert h_{2,l}(X_{i,l},X_{j,l}) \Vert ^2 ] \leq (n_2-n_1)(n_4-n_3) M \\
& < n_4 (n_4-n_3) M
\end{align*}
For (\ref{Y_A}) we use the above mentioned lemma of \cite{Y76}. Note that by the double summation, we have three different cases to analyse: $(i_1\neq i_2)$ or $(j_1 \neq j_2)$ or both. Universal, let $m=\max(j_1-i_1, j_2-i_2)$, first assume that  $m=j_1-i_1$ and let $\tilde{\delta}=\delta/2 > 0$. \newline
\underline{\text{First case:}} $i_1 \neq i_2$ and $j_1 \neq j_2$ \newline
Define the function $g(x_1,x_2,x_3,x_4):=\langle h_{2,l}(x_1,x_2),h_{2,l}(x_3,x_4) \rangle$ and check that (\ref{YCond}) holds true for $I=\{i_1\}$ and $I^C = \{j_1, i_2, j_2 \}$:
\begin{align*}
& \E[\vert g(X_{i_1,l}, X'_{j_1,l}, X'_{i_2,l}, X'_{j_2,l}) \vert^{1+\tilde{\delta}}] \leq \E[\Vert h_{2,l}(X_{i_1,l}, X'_{j_1,l}) \Vert ^{1+\tilde{\delta}} \Vert h_{2,l}(X'_{i_2,l}, X'_{j_2,l}) \Vert ^{1+\tilde{\delta}}] \\
& \leq \E[\Vert h_{2,l}(X_{i_1,l}, X'_{j_1,l}) \Vert^{2(1+\tilde{\delta})}]^{1/2} \E[\Vert h_{2,l}(X'_{i_2,l}, X'_{j_2,l}) \Vert^{2(1+\tilde{\delta})}]^{1/2} \leq M 
\end{align*}
by our moment assumptions and $\delta = \tilde{\delta}/2$. Here, we first use the Cauchy-Schwarz inequality and then Hölder's inequality. Now (\ref{YState}) states that
\begin{align}\label{Yoshi1}
\left\vert \E[ g(X_{i_1,l}, X_{j_1,l}, X_{i_2,l}, X_{j_2,l}) ] - \E[g(X_{i_1,l}, X'_{j_1,l}, X'_{i_2,l}, X'_{j_2,l})] \right\vert \leq C \beta_{m-2l}^{\tilde{\delta}/(1+\tilde{\delta})}
\end{align}
The second expectation equals 0, which can be seen by using the law of the iterated expectation:
\begin{align} \label{4}
& \E[g(X_{i_1,l}, X'_{j_1,l}, X'_{i_2,l}, X'_{j_2,l})] = \E[\E[g(X_{i_1,l}, X'_{j_1,l}, X'_{i_2,l}, X'_{j_2,l})| X'_{j_1,l}, X'_{i_2,l}, X'_{j_2,l} ]] \nonumber \\
& = \E[\E[\langle h_{2,l}(X_{i_1,l}, X'_{j_1,l}), h_{2,l}(X'_{i_2,l}, X'_{j_2,l}) \rangle |   X'_{j_1,l}, X'_{i_2,l}, X'_{j_2,l} ]  ] \nonumber \\ 
& = \E[ \langle \E[ h_{2,l}(X_{i_1,l}, X'_{j_1,l}) |  X'_{j_1,l}, X'_{i_2,l}, X'_{j_2,l}  ], h_{2,l}(X'_{i_2,l}, X'_{j_2,l}) \rangle  ]
\end{align}
since $h_{2,l}(X'_{i_2,l}, X'_{j_2,l})$ is measurable with respect to the inner (conditional) expectation. In general it holds for random variables $X,Y$ that $\E[ \langle Y,X \rangle | \mathfrak{B}]=\langle Y, \E[X|\mathfrak{B}] \rangle$ if $Y$ is measurable with respect to $\mathfrak{B}$.
So, 
\begin{align*}
(\ref{4}) =  \E[ \langle \underbrace{\E[ h_{2,l}(X_{i_1,l}, X'_{j_1,l}) |  X'_{j_1,l}, X'_{i_2,l}, X'_{j_2,l}  ]}_{=\; 0 \text{ because $h_{2,l}$ is degenerated}}, h_{2,l}(X'_{i_2,l}, X'_{j_2,l}) \rangle  ] = 0.
\end{align*}
Plugging this into (\ref{Yoshi1}), we get that
\begin{align*}
\E[\langle h_{2,l}(X_{i_1,l}, X_{j_1,l}), h_{2,l}(X_{i_2,l}, X_{j_2,l})\rangle  ] \leq \left\vert \E[ g(X_{i_1,l}, X_{j_1,l}, X_{i_2,l}, X_{j_2,l})] \right\vert \leq C \beta_{m-2l}^{\tilde{\delta}/(1+\tilde{\delta})} .
\end{align*}
We repeat the above argumentation for the other two cases: \newline
\underline{Second case:} $i_1 \neq i_2$ but $j_1=j_2$ \newline
Define the function $g(x_1,x_2,x_3) := \langle h_{2,l}(x_1,x_2), h_{2,l}(x_3,x_2) \rangle$ and check that (\ref{YCond}) holds true for $I=\{i_1\}$ and $I^C=\{j_1,i_2\}$:
\begin{align*}
& \E[\vert g(X_{i_1,l},X'_{j_1,l}, X'_{j_2,l}) \vert ^{1+\tilde{\delta}}] \\
& \leq \E[\Vert h_{2,l}(X_{i_1,l}, X'_{j_1,l}) \Vert^{2(1+\tilde{\delta})}]^{1/2} \E[\Vert h_{2,l}(X'_{i_2,l}, X'_{j_1,l}) \Vert^{2(1+\tilde{\delta})}]^{1/2} \leq M 
\end{align*}
Here, (\ref{YState}) states that 
\begin{align} \label{Yoshi2}
\left\vert \E[g(X_{i_1,l}, X_{j_1,l}, X_{i_2,l})] -\E[g(X_{i_1,l}, X'_{j_1,l}, X'_{i_2,l})] \right\vert \leq C \beta_{m-2l}^{\tilde{\delta}/(1+\tilde{\delta})}
\end{align}
Again, the second expectation equals zero:
\begin{align*}
\E[g(X_{i_1,l}, X'_{j_1,l}, X'_{i_2,l})] & = \E[\E[\langle h_{2,l}(X_{i_1,l},X'_{j_1,l}), h_{2,l}(X'_{i_2,l},X'_{j_1,l}) \rangle \vert X'_{i_2,l}, X'_{j_1,l}    ]  ] \\
& = \E[ \langle \underbrace{\E[h_{2,l}(X_{i_1,l},X'_{j_1,l}) \vert X'_{i_2,l}, X'_{j_1,l}]}_{=0},   h_{2,l}(X'_{i_2,l},X'_{j_1,l}) \rangle    ] \\
& = 0  
\end{align*}
Plugging this into (\ref{Yoshi2}), we get that
\begin{align*}
\E[\langle h_{2,l}(X_{i_1,l},X_{j_1,l}), h_{2,l}(X_{i_2,l},X_{j_1,l}) \rangle  ] \leq  \left\vert\E[g(X_{i_1,l}, X_{j_1,l}, X_{i_2,l})] \right\vert \leq C \beta_{m-2l}^{\tilde{\delta}/(1+\tilde{\delta})}.
\end{align*}
\underline{Third case:} $j_1\neq j_2$ but $i_1=i_2$ \newline
Define the function $g(x_1,x_2,x_3) := \langle h_{2,l}(x_1,x_2), h_{2,l}(x_1,x_3) \rangle$. Checking that (\ref{YCond}) holds true for $I=\{i_1\}$ and $I^C=\{j_1,j_2\}$ works completely similar to the second case. And noting that we have to condition on $X_{i_1,l}, X'_{j_2,l}$ in this case, yields: 
\begin{align*}
\E[\langle h_{2,l}(X_{i_1,l},X_{j_1,l}), h_{2,l}(X_{i_1,l},X_{j_2,l}) \rangle  ] \leq \left\vert\E[g(X_{i_1,l}, X_{j_1,l}, X_{j_2,l})] \right\vert \leq C \beta_{m-2l}^{\tilde{\delta}/(1+\tilde{\delta})}
\end{align*}

We can conclude for the quadratic term:
\begin{align}\label{5}
& \E[\Vert \sum_{n_1\leq i \leq n_2,\, n_3 \leq j\leq n_4} h_{2,l}(X_{i,l},X_{j,l})\Vert ^2 ]= \nonumber \\
& \underset{(i_1 \neq i_2) \,\text{or}\, (j_1 \neq j_2) \,\text{or both} }{\sum_{n_1\leq i_1 \leq n_2,\, n_3 \leq j_1\leq n_4} \sum_{n_1\leq i_2\leq n_2,\, n_3\leq j_2\leq n_4}} C \beta_{m-2l}^{\tilde{\delta}/(1+\tilde{\delta})} + n_4(n_4-n_3)M
\end{align}

For a fixed $m$ we have the following possibilities to choose:\\
Since we assumed $m=j_1-i_1$, there are \begin{itemize}
\item at most $n_2-n_1 < n_4 $ possibilities for $i_1$, so only $1$ possibility for $j_1$
\item at most $(n_4-n_3)$ possibilities for $j_2$, so at most $m$ possibilities for $i_2$, since by the definition of $m$ the value $j_2-i_2$ is smaller (or equal) than $m$.
\end{itemize}
So, recalling that $\delta = \tilde{\delta}/2$, we have
\begin{align*}
&\underset{(i_1 \neq i_2) \,\text{or}\, (j_1 \neq j_2) \,\text{or both} }{\sum_{n_1\leq i_1\leq n_2,\, n_3\leq j_1\leq n_4} \sum_{n_1\leq i_2\leq n_2,\, n_3\leq j_2\leq n_4}} C \beta_{m-2l}^{\tilde{\delta}/(1+\tilde{\delta})}\\
& \leq C (n_4-n_3)n_4 \sum_{m=1}^{n_4} m \beta_{m-2l}^{\frac{\delta}{2+\delta}} = C (n_4-n_3) \left( \sum_{m=1}^{2l-1}m \underbrace{\beta_{m-2l}^{\frac{\delta}{2+\delta}}}_{=1} + \sum_{m=2l}^{n_4} \beta_{m-2l}^{\frac{\delta}{2+\delta}} \right)\displaybreak[0] \\
& \leq  C (n_4-n_3)n_4 \left( \sum_{m=1}^{2l-1} m + \sum_{m=2l}^{n_4} (m-2l)\bm + \sum_{m=2l}^{n_4} 2l\bm \right) \\
&  \leq C (n_4-n_3)n_4 \left( (2l)^2 + \sum_{m=2l}^{n_4} (m-2l)\bm +2l \sum_{m=2l}^{n_4}(m-2l)\bm \right) \\
& = C (n_4-n_3)n_4 \left( l^2 + (1+2l)\sum_{m=2l}^{n_4}(m-2l)\bm \right) \displaybreak[0]\\
& \leq C (n_4-n_3)n_4 \left( l^2 + (2l)^2\sum_{m=2l}^{n_4} (m-2l) \bm \right) \;\;\; \text{for $l>\frac{1}{2}$}\\
& \leq C (n_4-n_3)n_4 \Big( l^2 + l^2 \underbrace{\sum_{m=2l}^{n_4}(m-2l)^2 \bm}_{<\infty } \Big) \\
& \leq C  (n_4-n_3) n_4 l^2  \leq C (n_4-n_3) n_4^{\frac{3}{2}} .
\end{align*}
So $(\ref{5})  \leq  C (n_4-n_3) n_4^{\frac{3}{2}}$. If $m=j_2 -i_2$, it works very similar. Just a few comments on what changes: We get in the first case $I=\{i_1,j_1,j_2\},\; I^C = \{j_2\}$, which leads to defining the function $g(X_{i_1,l},X_{j_1,l}, X_{i_2,l}, X'_{j_2,l} ) := \langle h_{2,l}(X_{i_1,l},X_{j_1,l}), h_{2,l}(X_{i_2,l}, X'_{j_2,l} )\rangle$ and conditioning on $X_{i_1,l},X_{j_1,l}, X_{i_2,l}$. For the second case it is $I = \{i_1,i_2\},\; I^C =\{j_2\}$. We define $g(X_{i_1,l}, X'_{j_2,l}, X_{i_2,l}) := \langle h_{2,l}(X_{i_1,l}, X'_{j_2,l}), h_{2,l}(X_{i_2,l}, X'_{j_2,l}) \rangle$ and condition on $X_{i_2,l}, X'_{j_2,l}$. In the third case it is $I=\{i_1,j_1\},\; I^C=\{j_2\}$, function $g(X_{i_1,l}, X_{j_1,l}, X'_{j_2,l}) := \langle h_{2,l}(X_{i_1,l}, X_{j_1,l}), h_{2,l}(X_{i_1,l}, X'_{j_2,l}) \rangle$ and we condition on $X_{i_1,l},X_{j_1,l}$.\\
This proves the lemma.
\end{proof}

\begin{prop} \label{degThm}
Under the assumptions of Theorem \ref{theo1}, it holds that 
\begin{itemize}
\item[a)]  \begin{align*}
E\bigg[\Big( \max_{1 \leq n_1 < n} \big\|\sum_{i=1}^{n_1}\sum_{j=n_1+1}^n h_2(X_i,X_j) \big\| \Big)^2  \bigg]\wur \leq C s^2 2^{\frac{5s}{4}} 
\\ \text{for $s$ large enough that $n\leq 2^s$.}
\end{align*}
\item[b)] \[ \max_{1\leq n_1 < n} \frac{1}{n^{3/2}} \Big\| \sum_{i=1}^{n_1}\sum_{j=n_1+1}^n h_2(X_i,X_j) \Big\| \xrightarrow{\text{a.s.}} 0 \;\;\; \text{for } n\to\infty . \]
\end{itemize}
\end{prop}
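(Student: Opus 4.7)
The proof of (a) is by a dyadic chaining argument on the change-point index $n_1$, built on top of the approximation scheme provided by Lemmas~\ref{P2}--\ref{P4}. Writing $S_{n_1} := \sum_{i=1}^{n_1}\sum_{j=n_1+1}^{n} h_2(X_i, X_j)$ and choosing $s$ with $n \leq 2^s$, introduce the dyadic grids $\mathcal{G}_k = \{m \cdot 2^{s-k} : 0 \leq m \leq 2^k\}$ for $k = 0, \ldots, s$. For each $n_1$ let $n_1^{(k)}$ be the largest element of $\mathcal{G}_k$ not exceeding $n_1$, so that $n_1^{(0)} = 0$, $n_1^{(s)} = n_1$, and
\[
\max_{1 \leq n_1 < n} \|S_{n_1}\| \leq \sum_{k=1}^{s} \max_{n_1} \bigl\| S_{n_1^{(k)}} - S_{n_1^{(k-1)}} \bigr\|.
\]
For each chain difference, the antisymmetry of $h_2$ (which forces $h_2(x,x)=0$) cancels the diagonal block $(a,b]\times(a,b]$ and yields, for dyadic $a<b$,
\[
S_b - S_a = \sum_{a < i \leq b,\, j > b} h_2(X_i, X_j) - \sum_{i \leq a,\, a < j \leq b} h_2(X_i, X_j),
\]
a signed difference of two disjoint rectangular sums whose shorter side is $L_k := 2^{s-k}$.

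Each rectangular double sum is controlled in $L^2$ by a triple triangle-inequality split into the pure $h_{2,l}$-term on the approximating variables $X_{i,l}, X_{j,l}$ (bounded via Lemma~\ref{P4}), the error from replacing $X_i, X_j$ by $X_{i,l}, X_{j,l}$ (Lemma~\ref{P2}), and the error from replacing $h_2$ by $h_{2,l}$ on the approximating variables (Lemma~\ref{P3}); the choice $l = \lfloor n^{3/16} \rfloor$ matches the internal parameter of those lemmas. At level $k$ there are at most $2^{k-1}$ distinct nonzero chain differences, so the elementary maximal inequality $\E[\max_{i \leq N} \|Y_i\|^2]^{1/2} \leq \sqrt{N} \max_i \|Y_i\|_2$ turns a per-difference $L^2$-bound into the level-$k$ contribution, which is then summed over $k = 1, \ldots, s$.

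The main obstacle is obtaining a per-chain-difference $L^2$-bound that decays fast enough in the short side $L_k$ to survive the $\sqrt{2^{k-1}}$ coming from the union bound. The bounds of Lemmas~\ref{P2}--\ref{P4} as stated involve only the longer rectangle side $(n_4-n_3)$, so applied verbatim the per-difference bound is of order $2^{5s/4}$ at every level and the naive sum blows up to $2^{s/2}\cdot 2^{5s/4} = 2^{7s/4}$. The remedy is to reopen the interior counting in the proofs: in Lemma~\ref{P4} the Yoshihara step actually produces the factor $(n_2-n_1)(n_4-n_3)\,l^2$ before being loosened to $(n_4-n_3)n_4^{3/2}$, and tracking this gives the sharper bound $\sqrt{L_k}\cdot n^{11/16}$ per chain difference, with a matching refinement for the approximation errors in Lemmas~\ref{P2}--\ref{P3}. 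The factor $\sqrt{L_k} = 2^{(s-k)/2}$ precisely cancels the $\sqrt{2^{k-1}}$ from the union bound, leaving a level-$k$ contribution independent of $k$, and summation over the $s$ levels yields the claimed $Cs^2 \cdot 2^{5s/4}$ (in fact with room to spare).

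Part (b) follows from (a) by a Chebyshev--Borel--Cantelli argument. Taking $s = \lceil \log_2 n \rceil$, part (a) gives $\E\bigl[(n^{-3/2}\max_{n_1}\|S_{n_1}\|)^2\bigr] \leq C(\log n)^4 n^{-1/2}$, which is summable along the subsequence $n_j = 2^j$; Chebyshev's inequality and Borel--Cantelli then give almost sure convergence to zero along this subsequence. For intermediate $n \in [2^j, 2^{j+1})$, express the $n$-statistic in terms of the $2^{j+1}$-statistic plus a boundary rectangular correction of the form $\sum_{i \leq n_1,\, n < j' \leq 2^{j+1}} h_2$, whose $L^2$-norm is controlled by Lemmas~\ref{P2}--\ref{P4} and is $O(2^{5j/4})$; this yields $M_n/n^{3/2} \leq M_{2^{j+1}}/2^{3j/2} + O(2^{-j/4})$ uniformly in $n\in[2^j,2^{j+1})$, which upgrades the subsequential almost sure convergence to convergence along the full sequence.
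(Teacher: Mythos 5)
Your argument is correct in outline, but it takes a genuinely different route from the paper, and the comparison is instructive. The paper applies the triangle inequality \emph{first}, splitting $h_2(X_i,X_j)$ into the two approximation errors and the $h_{2,l}$ main term before taking any maximum over $n_1$. The point of that ordering is that the error terms have nonnegative summands, so the maximum over $n_1$ of their partial double sums is dominated by the full double sum over all $i<j$; Lemmas \ref{P2} and \ref{P3} then apply essentially verbatim with $(n_3,n_4)=(1,n)$ and give $Cn^{5/4}$ with no maximal inequality at all. Only the $h_{2,l}$ term needs a maximal inequality, and there the paper cites Theorem 1 of \cite{M76} as a black box, feeding it the increment bound $\E[\|S_{n_2}-S_{n_1}\|^2]\leq C(n_2-n_1)2^{3s/2}$ obtained from Lemma \ref{P4}; the $s^2$ factor in the statement is the logarithmic loss in M\'oricz's inequality. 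You instead chain on the full $h_2$ process, which amounts to re-proving M\'oricz's inequality by hand and, as you correctly diagnose, forces you to reopen Lemmas \ref{P2}--\ref{P4} to extract bounds that see the short side of the rectangle. Your accounting for Lemma \ref{P4} is right: its proof really does yield the product form $(n_2-n_1)(n_4-n_3)l^2$, and the paper itself silently relies on this when bounding the increments, since the stated bound $(n_4-n_3)n_4^{3/2}$ applied to the rectangle $(n_1,n_2]\times(n_2,n]$ would give $(n-n_2)n^{3/2}$ rather than $(n_2-n_1)n^{3/2}$. For Lemmas \ref{P2}--\ref{P3}, however, the refinement is not of the $\sqrt{L_k}$ type you assert: the Minkowski bound is linear in the number of summands, so for a rectangle with short side $L_k$ one gets $CL_kn^{1/4}$; this still closes the chaining sum (the level-$k$ contribution is $2^{k/2}L_kn^{1/4}=C2^{5s/4}2^{-k/2}$, summable in $k$), so the slip is harmless, but ``matching refinement'' oversells it. Two minor further points: the identity $S_b-S_a=\sum_{a<i\leq b,\,j>b}h_2-\sum_{i\leq a,\,a<j\leq b}h_2$ is exact by inclusion--exclusion and needs no antisymmetry or diagonal cancellation; and your treatment of part (b), with the boundary-rectangle correction for intermediate $n\in[2^j,2^{j+1})$, is actually more explicit than the paper's Borel--Cantelli step, which leaves the passage from the subsequence $n=2^s$ to general $n$ implicit.
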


\begin{proof}
\underline{Part a)}
We split the expectation with the help of the triangle inequality into three parts:
\begin{align}
& \E\bigg[\Big( \max_{1 \leq n_1 < n} \Vert \sum_{i=1}^{n_1}\sum_{j=n_1+1}^n h_2(X_i,X_j) \Vert \Big)^2  \bigg]\wur \nonumber \\
& \leq \E\bigg[ \Big( \max_{1 \leq n_1 < n} \sum_{i=1}^{n_1} \sum_{j=n_1+1}^n \Vert h_2(X_i,X_j)-h_2(X_{i,l},X_{j,l}) \Vert \Big)^2   \bigg]\wur  \label{3I} \\
& + \E\bigg[\Big( \max_{1 \leq n_1 < n} \sum_{i=1}^{n_1} \sum_{j=n_1+1}^n \Vert h_{2,l}(X_{i,l},X_{j,l})-h_2(X_{i,l},X_{j,l}) \Vert \Big)^2  \bigg]\wur \label{3II}  \\
& + \E\bigg[\Big( \max_{1 \leq n_1 < n} \Vert \sum_{i=1}^{n_1} \sum_{j=n_1+1}^n h_{2,l}(X_{i,l},X_{j,l}) \Vert \Big)^2  \bigg]\wur  \label{3III}
\end{align}
We want to use Lemma \ref{P2}-\ref{P4} to bound the three terms. Because the summands of (\ref{3I}) are all positive, we have by Lemma \ref{P2}
\begin{equation*}
 (\ref{3I})  \leq \E\bigg[ \Big(\sum_{j=1}^{n} \sum_{i=1}^{j-1} \Vert h_2(X_i,X_j)-h_2(X_{i,l},X_{j,l}) \Vert \Big)^2   \bigg]\leq Cn^{5/4}.
\end{equation*}
(\ref{3II}) can be bounded in the same way, using Lemma \ref{P3}. For (\ref{3III}), the idea is to rewrite the double sum. First note that for $n_1<n_2$
\begin{align*}
&\sum_{i=1}^{n_2} \sum_{j=n_2+1}^n h_{2,l}(X_{i,l},X_{j,l})-\sum_{i=1}^{n_1} \sum_{j=n_1+1}^n h_{2,l}(X_{i,l},X_{j,l})\\
=&\sum_{i=n_1+1}^{n_2} \sum_{j=n_2+1}^n h_{2,l}(X_{i,l},X_{j,l})-\sum_{i=1}^{n_1} \sum_{j=n_1+1}^{n_2} h_{2,l}(X_{i,l},X_{j,l}).
\end{align*}
So we can conclude by Lemma \ref{P4} that
\begin{align*}
& \E\bigg[\Big( \Vert \sum_{i=1}^{n_2} \sum_{j=n_2+1}^n h_{2,l}(X_{i,l},X_{j,l})-\sum_{i=1}^{n_1} \sum_{j=n_1+1}^n h_{2,l}(X_{i,l},X_{j,l}) \Vert \Big)^2\bigg]\\
& \leq (n_2-n_1)n^{3/2}\leq (n_2-n_1)2^{3s/2}
\end{align*}
as $n\leq 2^s$. By Theorem 1 \cite{M76} (which also holds in Hilbert spaces) it follows that
\begin{equation*}
\E\bigg[\Big( \max_{1 \leq n_1 < n} \Vert \sum_{i=1}^{n_1} \sum_{j=n_1+1}^n h_{2,l}(X_{i,l},X_{j,l}) \Vert \Big)^2  \bigg] \leq Cs^2 2^{5s/2}
\end{equation*}
and by taking the square root
\begin{align*}
 (\ref{3III}) = \E\bigg[\Big( \max_{1 \leq n_1 < n} \Vert \sum_{i=1}^{n_1} \sum_{j=n_1+1}^n h_{2,l}(X_{i,l},X_{j,l}) \Vert \Big)^2  \bigg]\wur \leq C s^{\frac{3}{2}}2^{\frac{5s}{4}} \leq Cs^2 2^{\frac{5s}{4}}.
\end{align*}
This yields all together
\[\E\bigg[\Big( \max_{1 \leq n_1 < n} \Vert \sum_{i=1}^{n_1}\sum_{j=n_1+1}^n h_2(X_i,X_j) \Vert \Big)^2  \bigg]\wur  \leq Cs^2 2^{\frac{5s}{4}}  \]

\underline{Part b)}
Recall that $s$ is chosen such that $n\leq 2^s$ and thus $n^{\frac{3}{2}} \leq 2^{\frac{3s}{2}}$. To prove almost sure convergence, it is enough to prove that for any $\epsilon>0$
\[ \sum_{s=1}^{\infty}\Prob\Big( 2^{-\frac{3s}{2}} \max_{1\leq n_1<n} \big\| \sum_{s=1}^{n_1}\sum_{j=n_1+1}^n h_2(X_i,X_j) \big\|  > \epsilon\Big) < \infty  \]
We do this by using Markov's inequality and our result from a):
\begin{align*}
& \sum_{s=1}^{\infty}\Prob\Big( 2^{-\frac{3s}{2}} \max_{1\leq n_1<n} \Vert \sum_{s=1}^{n_1}\sum_{j=n_1+1}^n h_2(X_i,X_j) \Vert  > \epsilon\Big) \\
& \leq \frac{1}{\epsilon^2}\sum_{s=1}^{\infty} \E\bigg[\Big(2^{-\frac{3s}{2}}\max_{1\leq n_1<n} \Vert \sum_{s=1}^{n_1}\sum_{j=n_1+1}^n h_2(X_i,X_j) \Vert \Big)^2  \bigg] \\
& = \frac{1}{\epsilon^2}\sum_{s=1}^{\infty} 2^{-3s} \E\bigg[\Big(\max_{1\leq n_1<n} \Vert \sum_{s=1}^{n_1}\sum_{j=n_1+1}^n h_2(X_i,X_j) \Vert \Big)^2  \bigg] \\
& \leq \frac{1}{\epsilon^2} \sum_{s=1}^{\infty} 2^{-3s}(Cs^2 2^{\frac{5s}{4}})^2 \;\;\; \text{by part a)} \\
& = \frac{C}{\epsilon^2} \sum_{s=1}^{\infty} s^4 2^{-\frac{s}{2}} < \infty
\end{align*}
By the Borel-Cantelli lemma follows the almost sure convergence
\[ \max_{1\leq n_1 < n} \frac{1}{n^{3/2}} \Vert \sum_{i=1}^{n_1}\sum_{j=n_1+1}^n h_2(X_i,X_j) \Vert \xrightarrow{\text{a.s.}} 0 \;\;\; \text{for } n\to\infty . \]
\end{proof}

\subsection{Results under Alternative}

Recall our model under the alternative: \\
$(X_n, Z_n)_{n\in\Z}$ is a stationary, $H\otimes H$-valued sequence and we observe $Y_1,...,Y_n$ with
\begin{equation*}
Y_i=\begin{cases}X_i \  \ &\text{for}\ i\leq \lfloor n\lambda^\star \rfloor = k^\star \\ Z_i \  \ &\text{for}\ i> \lfloor n\lambda^\star \rfloor = k^\star \end{cases},
\end{equation*}
so $\lambda^\star\in(0,1)$ is the proportion of observations after which the change happens. We assume that the process $(X_i,Z_i)_{i\in\mathbb{Z}}$ is stationary and P-NED on an absolutely regular sequences $(\zeta_n)_{n\in \mathbb{Z}}$. \\
Let $h: H \times H \rightarrow H$ be an antisymmetric kernel and assume that $\E[h(X_0,\tilde{Z}_0)] \neq 0$, where $\tilde{Z}_0$ is an independent copy of $Z_0$ and independent of $X_0$. Since $X_0$ and $\tilde{Z}_0$ are not identically distributed, Hoeffding's decomposition of $h$ equals
\[h(x,y) = h_1^\star(x)-h_1(y)+h_2^\star(x,y) \]
where
\begin{align} h_1(x)&=\E[h(x,X_0)]\; , \;\;\;\;\; h_1^\star(x)=\E[h(x,Z_0)]  \\
\label{degalt}h_2^\star(x,y)&= h(x,y)-h_1^\star(x)+h_1(y)
\end{align}
So it holds for the test statistic $U_{n,k^\star}(Y):= \sum_{i=1}^{k^\star}\sum_{j=k^\star+1}^n h(Y_i,Y_j)$ that
\begin{align*}
U_{n,k^\star}(Y) & = \sum_{i=1}^{k^\star}\sum_{j=k^\star+1}^n h(X_i,Z_j) \\
& = \sum_{i=1}^{k^\star}\sum_{j=k^\star+1}^n\big( h_1^\star(X_i)-h_1(Z_j) +h_2^\star(X_i,Z_j)\big) \\
& = (n-k^\star) \sum_{i=1}^{k^\star} h_1^\star(X_i) - k^\star\sum_{j=k^\star+1}^n h_1(Z_j) + \sum_{i=1}^{k^\star}\sum_{j=k^\star+1}^n h_2^\star(X_i,Z_j).
\end{align*}

\begin{lem} \label{A_deg}
Let the Assumption of Theorem \ref{theo2} hold for $(X_i,Z_i)_{i\in \mathbb{Z}}$ and let $h_2^\star$ as defined in \eqref{degalt}. Then it holds that
\[  \frac{1}{n^{3/2}}\Vert \sum_{i=1}^{k^\star} \sum_{j=k^\star+1}^n h_2^\star(X_i,Z_j)+\E[h(X_0,\tilde{Z}_0) \Vert  \xrightarrow{\text{a.s.}} 0 \;\;\; \text{for } n\to\infty,  \]
where $\tilde{Z}_0$ is an independent copy of $Z_0$ and independent of $X_0$.
\end{lem}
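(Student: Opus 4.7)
The plan is to recognize that the constant shift $\E[h(X_0,\tilde Z_0)]$ is precisely what turns $h_2^\star$ into a \emph{marginally bi-degenerate} kernel and then re-run Lemmas \ref{P1}--\ref{P4} and Proposition \ref{degThm} on the joint P-NED process $(X_n,Z_n)_{n\in\Z}$. Define $\tilde h_2^\star(x,y):=h_2^\star(x,y)+\E[h(X_0,\tilde Z_0)]$. Using antisymmetry of $h$ (which gives $\E[h(\tilde X_0,y)]=-h_1(y)$) one checks in one line that
\[
\E[\tilde h_2^\star(X_0,y)]=0\quad \forall y\in H,\qquad \E[\tilde h_2^\star(x,\tilde Z_0)]=0\quad \forall x\in H,
\]
where expectations are over independent marginal copies $X_0$ and $\tilde Z_0$. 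This is the exact analogue, in the two-sample setting, of the degeneracy of $h_2$ that drove the null proof.

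Next I would transcribe the four preparatory lemmas with $h_2$ replaced by $\tilde h_2^\star$ and the second argument replaced by $Z_j$. Lemma \ref{P1} goes through because the variation condition and the uniform $(4+\delta)$-moments under approximation are assumed in Theorem \ref{theo2} for both marginals, and adding a deterministic constant to $h_2^\star$ affects neither the variation bound nor the moment bound. The bookkeeping bounds of Lemmas \ref{P2} and \ref{P3} use only positivity and pointwise control and transfer unchanged. The key step is the analogue of Lemma \ref{P4}: one writes the second moment $E\big[\big\|\sum_{i,j}\tilde h_2^\star(X_{i,l},Z_{j,l})\big\|^2\big]$ as a double sum of inner products and, in each of the three cases $(i_1\neq i_2)$, $(j_1\neq j_2)$, or both, replaces a distant block by an independent copy at mixing cost $\beta_{m-2l}^{\delta/(2+\delta)}$ via Yoshihara's lemma; the remaining conditional expectation then vanishes thanks to the bi-degeneracy of $\tilde h_2^\star$ in its free argument, leaving only the $n(n-k^\star)n^{3/2}$-type bound exactly as in Lemma \ref{P4}.

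Combining the three pieces as in the proof of Proposition \ref{degThm} (triangle inequality for the $f_l$-approximation, the Hoeffding-style reduction, and the bi-degenerate bound) and invoking \cite{M76} --- or, even simpler, just fixing $n_1=k^\star$ since we do not need a maximum --- one obtains
\[
E\Big[\Big\|\sum_{i=1}^{k^\star}\sum_{j=k^\star+1}^{n}\tilde h_2^\star(X_i,Z_j)\Big\|^2\Big]\leq C\,n^{5/2}.
\]
Markov's inequality applied along the dyadic subsequence $n\leq 2^s$, together with the Borel--Cantelli lemma as in part b) of Proposition \ref{degThm}, then upgrades this $L^2$ bound to the claimed almost-sure convergence. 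The main obstacle is the bi-degeneracy identification at the start: without the exact shift $\E[h(X_0,\tilde Z_0)]$, at least one of the three conditional-expectation vanishings in the Yoshihara step fails, leaving a non-negligible bias of order $k^\star(n-k^\star)\asymp n^2$ that would ruin the $o(n^{3/2})$ conclusion. Once bi-degeneracy is established and the assumptions of Theorem \ref{theo2} are seen to give the joint-process versions of all the inputs (P-NED on $\zeta_n$, the mixing-rate assumption, the variation condition, and the uniform $(4+\delta)$-moments), the rest is a direct transcription of the null proof.
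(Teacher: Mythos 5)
Your proposal is correct and follows essentially the same route as the paper: the paper's proof likewise consists of observing that $h_2^\star(x,z)+\E[h(X_0,\tilde{Z}_0)]$ is a (bi-)degenerate kernel, using $\E[h_1^\star(X_0)]=\E[h(X_0,\tilde{Z}_0)]$, and then invoking the same chain of arguments (Lemmas \ref{P1}--\ref{P4} and Proposition \ref{degThm}) as under the null hypothesis. Your write-up simply spells out in more detail the transcription step that the paper leaves implicit.
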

\begin{proof}
Notice that $h_2^\star(x,z)+ \E[h(X_0,\tilde{Z}_0)$ is degenerated since $\E[h_1^\star(X_0)]= \linebreak\E[h(X_0,\tilde{Z}_0)]$ and
\begin{multline*}
E\big[ h_2^\star(X_0,z)+ \E[h(X_0,\tilde{Z}_0)] \big]=E\big[ h(X_0,y)-h_1^\star(X_0)-h_1(y)+\E[h(X_0,\tilde{Z}_0)] \big]\\
=h_1(y)-\E[h(X_0,\tilde{Z}_0)]-h_1(y)+\E[h(X_0,\tilde{Z}_0)]=0
\end{multline*}
and similarly $ \E[h_2^\star(x,\tilde{Z}_0)+ \E[h(X_0,\tilde{Z}_0)]  = 0  $. So we can prove the lemma along the same arguments as under the null hypothesis.  
\end{proof}

\begin{lem} \label{A_lin}
Under the assumption of Theorem \ref{theo2} it holds that 
\[ \Big( \frac{1}{\sqrt{n}} \sum_{i=1}^{\left \lfloor{n\lambda}\right\rfloor  }\big( h_1^\star(X_i)-\E[h(X_0,\tilde{Z}_0)]\big) \Big)_{\lambda\in [0,1]} \Rightarrow (W_1(\lambda))_{\lambda\in [0,1]}  \] and
\[ \Big( \frac{1}{\sqrt{n}} \sum_{i=1}^{\left \lfloor{n\lambda}\right\rfloor  }\big( h_1(Z_i)+\E[h(X_0,\tilde{Z}_0)]\big)\Big)_{\lambda\in [0,1]} \Rightarrow (W_2(\lambda))_{\lambda\in [0,1]}  \] 
where $(W_1(\lambda))_{\lambda\in [0,1]}$, $(W_2(\lambda))_{\lambda\in [0,1]}$ are Brownian motions with covariance operator as defined in Theorem \ref{theo1}.
\end{lem}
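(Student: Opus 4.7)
The plan is to apply Theorem 1 of \cite{STW16} to the two recentered sequences $(h_1^\star(X_n)-\E[h(X_0,\tilde{Z}_0)])_{n\in\mathbb{Z}}$ and $(h_1(Z_n)+\E[h(X_0,\tilde{Z}_0)])_{n\in\mathbb{Z}}$, following the template laid out in Proposition \ref{linThm}. The only real work is to re-verify the four assumptions of that invariance principle for these new sequences.

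First, I would check that the recentering is correct. By the tower property, $\E[h_1^\star(X_0)]=\E[h(X_0,\tilde{Z}_0)]$, so the first sequence has mean zero. For the second sequence, the tower property gives $\E[h_1(Z_0)]=\E[h(Z_0,\tilde{X}_0)]$; by antisymmetry of $h$ together with the fact that $(X_0,\tilde{Z}_0)$ and $(\tilde{X}_0,Z_0)$ have the same joint distribution (two independent copies of $X_0$ and $Z_0$ in different orders), we get $\E[h(Z_0,\tilde{X}_0)]=-\E[h(\tilde{X}_0,Z_0)]=-\E[h(X_0,\tilde{Z}_0)]$, so adding $\E[h(X_0,\tilde{Z}_0)]$ restores mean zero.

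The key step is then to establish that $(h_1^\star(X_n))_{n\in\mathbb{Z}}$ and $(h_1(Z_n))_{n\in\mathbb{Z}}$ are both $L_2$-NED on $(\zeta_n)_{n\in\mathbb{Z}}$ with the same rate $a_{k,2}=\mathcal{O}(k^{-4(\delta+3)/\delta})$ as in Lemma \ref{linLem}. The computation carries over almost verbatim: one uses Jensen's inequality to pass from $\|h_1^\star(x)-h_1^\star(x')\|$ to $\|h(x,\tilde{Z})-h(x',\tilde{Z})\|$, then applies the variation condition, splits on the event $\{\|X_0-X_{0,k}\|\leq s_k\}$, uses the $(4+\delta)$-moment bound (which holds uniformly under approximation for both $(X_n)$ and $(Z_n)$ by assumption, and which gives $\E[\|h_1^\star(X_0)\|^{4+\delta}]\leq\E[\|h(X_0,\tilde{Z}_0)\|^{4+\delta}]<\infty$ via Jensen), and concludes with the polynomial decay $a_k\Phi(k^{-8(\delta+3)/\delta})=\mathcal{O}(k^{-8(\delta+3)(\delta+2)/\delta^2})$. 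Once this is in place, assumptions 2–4 of \cite{STW16} follow verbatim as in Proposition \ref{linThm}, and the invariance principle yields weak convergence to a Brownian motion $W_1$ (respectively $W_2$) with covariance operator determined by the long-run covariance of the recentered sequence.

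The main obstacle is purely notational: the kernel $h_1^\star(x)=\E[h(x,Z_0)]$ mixes the two marginal laws, so the Lemma \ref{linLem} argument has to be run with the joint process $(X_n,Z_n)$ playing the role that $(X_n)$ played before. This is fine because the joint P-NED assumption supplies a single approximating functional $f_k(\zeta_{-k},\ldots,\zeta_k)$ whose coordinates approximate $X_0$ and $Z_0$ simultaneously, and the uniform-moments-under-approximation and variation hypotheses are assumed for both processes in Theorem \ref{theo2}. No new probabilistic input is needed beyond what is already in the statement.
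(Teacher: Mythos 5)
Your proposal matches the paper's own proof: both reduce the statement to checking the four assumptions of Theorem 1 of \cite{STW16} for the recentered sequences, establishing the $L_2$-NED property of $(h_1^\star(X_n))_{n\in\mathbb{Z}}$ and $(h_1(Z_n))_{n\in\mathbb{Z}}$ by rerunning the argument of Lemma \ref{linLem} with the joint process, and obtaining the moment bound $\E[\Vert h_1^\star(X_0)\Vert^{4+\delta}]\leq\E[\Vert h(X_0,\tilde{Z}_0)\Vert^{4+\delta}]$ via Jensen's inequality. Your explicit verification of the centering constants (including the antisymmetry argument showing $\E[h_1(Z_0)]=-\E[h(X_0,\tilde{Z}_0)]$) is a detail the paper leaves implicit, but the approach is the same.
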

\begin{proof}
The proof follows the steps of Theorem \ref{linThm}. So, we have to check the assumptions of Theorem 1 \cite{STW16}. We will do this for $h^\star_1(X_i)$, for $h_1(Z_i)$ everything holds similarly. First note that  $\E[h_1^\star(X_0)]=\E[h(X_0,\tilde{Z}_0)]$.

\underline{Assumption 1:} $(h_1^\star(X_n))_{n \in\mathbb{Z}}$ is $L_1$-NED.

Along the lines of the proof of Lemma \ref{linLem} we can show that $(h_1^\star(X_n))_{n\in\mathbb{Z}}$ is $L_2$-NED with approximating constants $a_{k,2}= \mathcal{O}( k^{-4\frac{3+\delta}{\delta}})$. By Jensen's inequality it follows that $(h_1^\star(X_n))_{n\in\mathbb{Z}}$ is $L_1$-NED with approximating constants $a_{k,1}=a_{k,2}$. 

\underline{Assumption 2:} Existing $(4+\delta)$-moments.

Recall that $h_1^\star(x)=\E[h(x,\tilde{Z}_0)]$, so by Jensen inequality
\begin{equation*}
E\left[|h_1^\star(X_i)|^{4+\delta}\right]\leq E[|h(X_1,\tilde{Z}_1)|^{4+\delta}]<\infty
\end{equation*}
% Just observe that $a_{m,1}^{\frac{\delta}{3+\delta}} \leq a_{m,1}^{\frac{\delta_1}{3+\delta_1}}$, since $\delta=\min(\delta_1,\delta_2)$. \\
% Then it follows similar as in Theorem \ref{linThm}: 
% \begin{align*} 
% \sum_{m=1}^{\infty} m^2 a_{m,1}^{\frac{\delta}{3+\delta}} & \leq  \sum_{m=1}^{\infty} m^2 a_{m,1}^{\frac{\delta_1}{3+\delta_1}}  = C \sum_{m=1}^{\infty} m^2(m^{-4\frac{3+\delta_1}{\delta_1}})^{\frac{\delta_1}{3+\delta_1}} \\
%&  = C \sum_{m=1}^{\infty} m^{-2} < \infty  
%\end{align*}

\underline{Assumption 3:} $\sum_{m=1}^{\infty} m^2 a_{m,1}^{\frac{\delta}{3+\delta}} \leq \infty$  follows similar as in Theorem \ref{linThm}.
%\begin{align*}
%\sum_{m=1}^{\infty} m^2 a_{m,1}^{\frac{\delta}{3+\delta}} & = C \sum_{m=1}^{\infty} m^2(m^{-4\frac{3+\delta}{\delta}})^{\frac{\delta}{3+\delta}} \\
%&  = C \sum_{m=1}^{\infty} m^{-2} < \infty 
%\end{align*}

\underline{Assumption 4:} $\sum_{m=1}^{\infty} m^2 \beta_m^{\frac{\delta}{4+\delta}} < \infty$ is assumed in Theorem \ref{theo2}.

\end{proof}

\begin{cor} \label{A_cor}
Under assumptions of Theorem \ref{theo1}, it holds that
\[\frac{1}{n^{3/2}} \sum_{i=1}^{k^\star}\sum_{j=k^\star +1}^n \big(h_1^\star(X_i)-h_1(Z_j)-2\E[h(X_0,\tilde{Z}_0)]\big) \]
is stochastically bounded.
\end{cor}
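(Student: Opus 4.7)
The plan is to exploit the fact that the summand is additively separable in $i$ and $j$, and thus the double sum collapses into two one-dimensional partial sums to which Lemma \ref{A_lin} applies directly.

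First I would perform the algebraic decomposition
\begin{align*}
&\sum_{i=1}^{k^\star}\sum_{j=k^\star+1}^n \big(h_1^\star(X_i)-h_1(Z_j)-2\E[h(X_0,\tilde{Z}_0)]\big) \\
&= (n-k^\star)\sum_{i=1}^{k^\star}\big(h_1^\star(X_i)-\E[h(X_0,\tilde{Z}_0)]\big) - k^\star \sum_{j=k^\star+1}^n \big(h_1(Z_j)+\E[h(X_0,\tilde{Z}_0)]\big),
\end{align*}
which uses only that the constant $2\E[h(X_0,\tilde{Z}_0)]$ is split as $\E[h(X_0,\tilde{Z}_0)]+\E[h(X_0,\tilde{Z}_0)]$, one half absorbed into each of the two centred partial sums.

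Next, after dividing by $n^{3/2}$ and writing $k^\star = \lfloor n\lambda^\star\rfloor$, the expression becomes
\[
\frac{n-k^\star}{n}\cdot\frac{1}{\sqrt{n}}\sum_{i=1}^{k^\star}\big(h_1^\star(X_i)-\E[h(X_0,\tilde{Z}_0)]\big) - \frac{k^\star}{n}\cdot\frac{1}{\sqrt{n}}\sum_{j=k^\star+1}^n\big(h_1(Z_j)+\E[h(X_0,\tilde{Z}_0)]\big).
\]
The deterministic prefactors satisfy $(n-k^\star)/n \to 1-\lambda^\star$ and $k^\star/n\to \lambda^\star$, both bounded. By Lemma \ref{A_lin}, the first rescaled partial sum converges in distribution (at $\lambda=\lambda^\star$) to $W_1(\lambda^\star)$, and writing the second as $\frac{1}{\sqrt{n}}\sum_{j=1}^n - \frac{1}{\sqrt{n}}\sum_{j=1}^{k^\star}$, it converges in distribution to $W_2(1)-W_2(\lambda^\star)$.

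Both limits are $H$-valued Gaussian random elements and are therefore tight, so each of the two terms is stochastically bounded; multiplying by a bounded deterministic sequence preserves stochastic boundedness, and the difference of two stochastically bounded sequences is stochastically bounded. This completes the argument. There is no real obstacle here: the main content is packaged in Lemma \ref{A_lin}, and the corollary is essentially a bookkeeping consequence of the fact that the linear part of the Hoeffding decomposition depends on each observation singly, turning the nominal $O(n^2)$ double sum into two $O(n^{1/2})$-scale partial sums multiplied by $O(n)$ prefactors.
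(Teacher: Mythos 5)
Your proposal is correct and follows essentially the same route as the paper: both split the separable double sum into the two centred single-index partial sums with prefactors $(n-k^\star)$ and $k^\star$, and then invoke Lemma \ref{A_lin} (tightness of the Gaussian limits) to conclude stochastic boundedness. Your write-up is merely a more explicit version of the paper's two-line argument.
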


\begin{proof}
This follows from Lemma \ref{A_lin} above:
\begin{align*}
& \bigg|\frac{1}{n^{3/2}} \sum_{i=1}^{k^\star}\sum_{j=k^\star+1}^n\big( h_1^\star(X_i)-h_1(Z_j)-2\E[h(X_0,\tilde{Z}_0)]\big)\bigg| \\
\leq &\bigg|\frac{1}{n^{1/2}} \sum_{i=1}^{k^\star} h_1^\star(X_i)-\E[h(X_0,\tilde{Z}_0)]\bigg| +\bigg| \frac{1}{n^{1/2}} \sum_{j=k^\star+1}^n h_1(Z_j)+\E[h(X_0,\tilde{Z}_0)] \bigg|
\end{align*}
Both summands converge weakly to a Gaussian limit and are stochastically bounded.
\end{proof}

\subsection{Dependent Wild Bootstrap}

\begin{prop}\label{degPart}
Let $(\varepsilon_i)_{i\leq n, n \in \mathbb{N}}$ be a triangular scheme of random multiplier independent from $(X_i)_{i \in \mathbb{Z}}$, such that the moment condition  $ \E[| \varepsilon_i | ^2] < \infty  $ holds.  \\
Then under the Assumptions of Theorem \ref{theo1}, it holds that 
\[ \max_{1\leq k < n} \frac{1}{n^{3/2}} \big\| \sum_{i=1}^{k}\sum_{j=k+1}^n h_2(X_i,X_j)(\varepsilon_i+\varepsilon_j) \big\| \xrightarrow{\text{a.s.}} 0 \;\;\; \text{for } n\to\infty  \]
\end{prop}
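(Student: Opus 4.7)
My plan is to mirror the proof of Proposition \ref{degThm}(b), inserting the multiplier factors $(\varepsilon_i+\varepsilon_j)$ throughout. The key observation is that by independence of $(\varepsilon_i)$ from $(X_n)$ and the (uniform) second-moment assumption on the triangular scheme, the quantities
\[
E[(\varepsilon_i+\varepsilon_j)^2]\leq 4E[\varepsilon_0^2],\qquad |E[(\varepsilon_i+\varepsilon_j)(\varepsilon_{i'}+\varepsilon_{j'})]|\leq 4E[\varepsilon_0^2]
\]
are uniformly bounded, so the multipliers contribute only a bounded constant to every second-moment estimate used in the proof of Proposition \ref{degThm}.

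First, I would carry out the same three-term decomposition with $l=\lfloor n^{3/16}\rfloor$:
\begin{align*}
\Big\|\sum_{i=1}^{k}\sum_{j=k+1}^{n} h_2(X_i,X_j)(\varepsilon_i+\varepsilon_j)\Big\|
&\leq \sum_{i,j}\|h_2(X_i,X_j)-h_2(X_{i,l},X_{j,l})\|\,|\varepsilon_i+\varepsilon_j|\\
&\quad{}+\sum_{i,j}\|h_{2,l}(X_{i,l},X_{j,l})-h_2(X_{i,l},X_{j,l})\|\,|\varepsilon_i+\varepsilon_j|\\
&\quad{}+\Big\|\sum_{i,j} h_{2,l}(X_{i,l},X_{j,l})(\varepsilon_i+\varepsilon_j)\Big\|.
\end{align*}
For the first two (non-negative) partial sums, Cauchy-Schwarz separates the $X$- and $\varepsilon$-parts termwise into $E[\|h_2(X_i,X_j)-h_2(X_{i,l},X_{j,l})\|^2]^{1/2}$ (or the analogue for $h_{2,l}-h_2$) times a uniformly bounded multiplier factor. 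Summing exactly as in the proofs of Lemmas \ref{P2} and \ref{P3} then yields the same $Cn^{5/4}$ second-moment bound uniformly in $k$.

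The real work is the third term. Expanding $\|\cdot\|^2$ as an inner product and using independence of $(\varepsilon_i)$ from $(X_n)$, every cross term factors as
\[
E\left[\langle h_{2,l}(X_{i_1,l},X_{j_1,l}),\,h_{2,l}(X_{i_2,l},X_{j_2,l})\rangle\right]\cdot E[(\varepsilon_{i_1}+\varepsilon_{j_1})(\varepsilon_{i_2}+\varepsilon_{j_2})].
\]
Lemma \ref{P4} already bounds the first factor by $C\beta_{m-2l}^{\delta/(2+\delta)}$ (with $m=\max(j_1-i_1,j_2-i_2)$) using Yoshihara's coupling together with the degeneracy of $h_{2,l}$; the second factor is at most $4E[\varepsilon_0^2]$. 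Applying the same counting argument over admissible quadruples $(i_1,j_1,i_2,j_2)$, and bounding the diagonal contribution by $E[\|h_{2,l}\|^2]\cdot E[(\varepsilon_i+\varepsilon_j)^2]$, one obtains for any $n_1<n_2<n_3<n_4$ with $l=\lfloor n_4^{3/16}\rfloor$
\[
E\Big[\Big\|\sum_{n_1\leq i\leq n_2,\,n_3\leq j\leq n_4} h_{2,l}(X_{i,l},X_{j,l})(\varepsilon_i+\varepsilon_j)\Big\|^2\Big]\leq C(n_4-n_3)n_4^{3/2}.
\]
Combining the three decomposition pieces and applying Theorem 1 of \cite{M76} in its Hilbert-space form converts this increment bound into $Cs^2 2^{5s/4}$ for the maximum over $k$ with $n\leq 2^s$, exactly the estimate of Proposition \ref{degThm}(a). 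Markov's inequality plus Borel-Cantelli along the dyadic subsequence $n=2^s$ (the resulting series $\sum_s s^4 2^{-s/2}$ is summable) then delivers the claimed almost sure convergence.

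The main obstacle I anticipate is verifying that the Yoshihara-type decoupling of Lemma \ref{P4} is not spoiled by the multipliers: one really needs that $(\varepsilon_i)$ and $(X_n)$ are independent so that the joint cross-expectation factors, and one must check that the residual multiplier moments are uniformly bounded across the triangular scheme so that they can be absorbed into the universal constant. Everything else is a careful multiplier-weighted rerun of Lemmas \ref{P1}-\ref{P4} and Proposition \ref{degThm}.
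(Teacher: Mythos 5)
Your proposal is correct and follows essentially the same route as the paper: the paper's proof likewise exploits independence of the multipliers to factor every second-moment and cross-covariance estimate in Lemmas \ref{P1}--\ref{P4} into the original $X$-bound times a bounded $\varepsilon$-moment, re-derives the analogues of Lemmas \ref{P2}--\ref{P4}, and then repeats the maximal-inequality and Borel--Cantelli argument of Proposition \ref{degThm}. The one point you flag as a potential obstacle (that the Yoshihara decoupling survives because the joint cross-expectation factors) is exactly the observation the paper relies on, so there is no gap.
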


\begin{proof} The statement follows along the line of the proofs of the Lemmas \ref{P3} to  \ref{P4} and Proposition \ref{degThm}. For this, note that by the independence of $(\varepsilon_i)_{i\leq n, n \in \mathbb{N}}$ and  $(X_i)_{i \in \mathbb{Z}}$ and by Lemma \ref{P1}
\begin{align*}
& \E[\Vert h_2(X_i,X_{i+k+2l})(\varepsilon_{i}+\varepsilon_{i+k+2l})-h_2(X_{i,l},X_{i+k+2l,l}(\varepsilon_{i}+\varepsilon_{i+k+2l}))\Vert^2]^{\frac{1}{2}} \\
&= \E[\Vert h_2(X_i,X_{i+k+2l})-h_2(X_{i,l},X_{i+k+2l,l}\Vert^2]^{\frac{1}{2}} \cdot\E[(\varepsilon_{i}+\varepsilon_{i+k+2l})^2]^{\frac{1}{2}} \\
& \leq C(\sqrt{\epsilon}+\beta_k^{\frac{\delta}{2(2+\delta)}}+(a_l\Phi(\epsilon))^{\frac{\delta}{2(2+\delta)}} ) 
\end{align*}
From this, we can conclude that for any $n_1 < n_2 < n_3 < n_4$ and $l= \left \lfloor{n_4^{\frac{3}{16}}}\right\rfloor $:
\[\E\bigg[\Big(\sum_{n_1 \leq i \leq n_2}\sum_{n_3 \leq j \leq n_4} \Vert h_2(X_i,X_j)-h_2(X_{i,l},X_{j,l})(\varepsilon_{i}+\varepsilon_{j})\Vert \Big)^2 \bigg]\wur \leq C(n_4-n_3)n_4^{\frac{1}{4}} \] 
as in Lemma \ref{P2}. Similary, we obtain (making use of the independence of $(\varepsilon_i)_{i\leq n}$ and  $(X_i)_{i \in \mathbb{Z}}$ again)
\begin{align*}
&\E[\Vert h_{2,l}(X_{i,l},X_{j,l})-h_2(X_{i,l},X_{j,l})(\varepsilon_i+\varepsilon_j) \Vert^2]\\
=&\E[\Vert h_{2,l}(X_{i,l},X_{j,l})-h_2(X_{i,l},X_{j,l}) \Vert^2] \E[(\varepsilon_i+\varepsilon_j) ^2]  \leq C\left( \sqrt{\epsilon} + (a_l\Phi(\epsilon))\del \right) 
\end{align*}
and along the lines of the proof of Lemma \ref{P3}  for any $n_1<n_2<n_3<n_4$ and $l= \left \lfloor{n_4^{\frac{3}{16}}}\right\rfloor $: 
\[ \E\bigg[\Big( \sum_{n_1\leq i \leq n_2,\, n_3\leq j\leq n_4} \Vert h_{2,l}(X_{i,l},X_{j,l})-h_2(X_{i,l},X_{j,l}) (\varepsilon_i+\varepsilon_j)\Vert \Big)^2 \bigg]\wur \!\! \leq \! C (n_4-n_3) n_4^{\frac{1}{4}}.  \]
With the same type of argument, we also obtain the analogous result to Lemma \ref{P4}:
\[ \E\bigg[\Big( \Vert \sum_{n_1\leq i \leq n_2,\, n_3\leq j\leq n_4} h_{2,l}(X_{i,l},X_{j,l})(\varepsilon_i+\varepsilon_j)\Vert \Big)^2  \bigg] \leq C (n_4-n_3)n_4^{\frac{3}{2}} \]
and then we can proceed as in the proof of Proposition \ref{degThm}.
\end{proof}

\begin{lem}\label{bootvar} Under the assumptions of Theorem \ref{theo3}, for any $t_0=0<t_1<t_2,...,t_k=1$ and any $a_1,...,a_k\in H$
\begin{equation*}
\Var\bigg[\frac{1}{\sqrt{n}}\sum_{j=1}^k\sum_{i=\lfloor
 nt_{j-1}\rfloor+1}^{\lfloor nt_j\rfloor}\langle a_j,h_1(X_i)\varepsilon_i\rangle\Big| X_1,...,X_n\bigg]\xrightarrow{\mathcal{P}}\Var\bigg[\sum_{j=1}^k\langle a_j,W(t_j)-W(t_{j-1})\rangle\bigg]
\end{equation*}

\end{lem}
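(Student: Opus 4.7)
The strategy is to exploit the conditional Gaussianity of the multipliers to reduce the assertion to an $L^{2}$-type statement about a lag-window estimator applied to the block-structured sequence $B_{i} := \langle a_{j(i)},h_{1}(X_{i})\rangle$, where $j(i)$ denotes the index of the block containing $i$ (so $\lfloor nt_{j(i)-1}\rfloor<i\leq \lfloor nt_{j(i)}\rfloor$). Since $(\varepsilon_{i})$ is independent of $(X_{i})$ with $\operatorname{Cov}(\varepsilon_{i},\varepsilon_{i'}) = w((i-i')/q_{n})$, the conditional variance on the left-hand side equals
\begin{equation*}
V_{n} \;=\; \frac{1}{n}\sum_{i,i'=1}^{n} B_{i}B_{i'}\, w\!\left(\tfrac{i-i'}{q_{n}}\right),
\end{equation*}
while independence of Brownian increments and the definition of the covariance operator $S$ give
\begin{equation*}
\tau \;:=\; \operatorname{Var}\!\Big[\sum_{j=1}^{k}\langle a_{j},W(t_{j})-W(t_{j-1})\rangle\Big] \;=\; \sum_{j=1}^{k}(t_{j}-t_{j-1})\,\langle S(a_{j}),a_{j}\rangle.
\end{equation*}
The lemma thus reduces to showing $V_{n}\xrightarrow{\mathcal{P}}\tau$, which I would prove by establishing $E[V_{n}]\to\tau$ and $\operatorname{Var}[V_{n}]\to 0$.

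For the mean, reorganize by the lag $h=i-i'$ and write
\begin{equation*}
E[V_{n}] \;=\; \sum_{h}w(h/q_{n})\,\gamma_{n}(h),\qquad \gamma_{n}(h) \;=\; \frac{1}{n}\sum_{i:\,1\leq i,i+h\leq n}E[B_{i}B_{i+h}].
\end{equation*}
The quantity $E[B_{i}B_{i+h}]$ depends only on which blocks contain $i$ and $i+h$. Splitting according to whether these two indices lie in the same block or not, the off-diagonal terms involve $O(|h|)$ pairs for each block boundary and hence contribute $O(|h|/n)\to 0$, while the diagonal contributions converge to $\sum_{j}(t_{j}-t_{j-1})\,\operatorname{Cov}(\langle a_{j},h_{1}(X_{0})\rangle,\langle a_{j},h_{1}(X_{h})\rangle)$. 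Absolute (indeed polynomial) summability of these real-valued covariances in $h$ follows from the $L_{2}$-NED property established in Lemma \ref{linLem} combined with the absolute regularity rates; together with $w(h/q_{n})\to 1$ as $q_{n}\to\infty$ for each fixed $h$ and the uniform bound $|w|\leq|w|_\infty$, a dominated-convergence argument lets me pass to the limit inside the sum over $h$ and conclude $E[V_{n}]\to\tau$.

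For the variance,
\begin{equation*}
\operatorname{Var}[V_{n}] \;=\; \frac{1}{n^{2}}\sum_{i_{1},i_{1}',i_{2},i_{2}'}\!\operatorname{Cov}(B_{i_{1}}B_{i_{1}'},B_{i_{2}}B_{i_{2}'})\,w\!\left(\tfrac{i_{1}-i_{1}'}{q_{n}}\right)w\!\left(\tfrac{i_{2}-i_{2}'}{q_{n}}\right).
\end{equation*}
The fourth-order covariances on the right are bounded by Yoshihara-type inequalities of the same flavour used in the proof of Lemma \ref{P4}: the $L_{2}$-NED approximation transports the bounds to the approximating $\sigma$-fields, and the rate assumptions on $\beta_{m}$ and $a_{k}\Phi$ in Theorem \ref{theo1} make the relevant fourth-order cumulant-type sums finite. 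Using the Riemann-sum bound $\sum_{h}|w(h/q_{n})|=O(q_{n})$ coming from the integrability of $w$, together with a standard counting of the four-index configurations where the fourth-order covariance is non-negligible, yields $\operatorname{Var}[V_{n}] = O(q_{n}/n)\to 0$. Combined with the mean convergence, this gives $V_{n}\to\tau$ in $L^{2}$, hence in probability.

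I expect the fourth-order bookkeeping in the variance step to be the main technical obstacle: the block-dependent coefficients $a_{j(i)}$ do not alter the dependence structure of the underlying $h_{1}(X_{i})$, but the Yoshihara/Hölder estimates have to be carried out with the extra weight $w(\cdot/q_{n})$ and the additional $1/n^{2}$ normalization. Everything else amounts to a standard HAC lag-window computation in the spirit of \cite{S10}, adapted to the block partition $(t_{j-1},t_{j}]$.
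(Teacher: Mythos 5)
Your proposal is correct in outline, but it takes a genuinely different route from the paper. The paper's proof is very short: after writing the conditional variance as $\sum_{i,l}V_{i,n}V_{l,n}\,w(|i-l|/q_n)$ with $V_{i,n}=\langle a_{j(i)},h_1(X_i)\rangle$, it observes that this is exactly a kernel (HAC) estimator of the long-run variance of a heteroscedastic triangular array and invokes the consistency theorem of de Jong and Davidson (2000), whose NED and mixing hypotheses are verified via Lemma \ref{linLem} and the fact that absolute regularity implies strong mixing. You instead prove HAC consistency from scratch by showing $\E[V_n]\to\tau$ and $\Var[V_n]\to 0$. Your mean computation (lag reorganization, the $O(|h|/n)$ cross-block boundary terms, dominated convergence using $w(h/q_n)\to w(0)=1$ and summability of the autocovariances of $\langle a_j,h_1(X_i)\rangle$) is sound and has the virtue of making explicit how the block structure produces the factor $t_j-t_{j-1}$ and why the limit is $\sum_j(t_j-t_{j-1})\langle S a_j,a_j\rangle$. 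Your variance bound $\Var[V_n]=O(q_n/n)$ is the standard one and shows exactly where the bandwidth condition $q_n/n\to 0$ enters. What your route costs is the fourth-order bookkeeping you yourself flag: the summability of the fourth-order cumulants of $B_i=\langle a_{j(i)},h_1(X_i)\rangle$ under the $L_2$-NED and absolute-regularity rates is precisely the technical content that the paper outsources to the cited theorem, and it is not literally covered by Lemma \ref{P4} (which concerns the degenerate kernel $h_{2,l}$, not $h_1$), though the Yoshihara-type machinery is indeed the right tool and the $(4+\delta)$-moments of $h_1(X_0)$ established in Proposition \ref{linThm} provide the needed integrability. So: same statement, same identification of the conditional variance as a lag-window estimator, but a self-contained first-principles consistency proof where the paper cites an off-the-shelf result; neither approach has a gap, and yours is longer but more transparent about the roles of the block partition and of $q_n$.
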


\begin{proof} To simplify the notation, we introduce a triangular scheme $V_{i,n} =\langle a_j,h_1(X_i)\rangle$ for $i=\lfloor nt_{j-1}\rfloor+1,...,i=\lfloor nt_{j}\rfloor$. By our assumptions, $\operatorname{Cov}(\varepsilon_i,\varepsilon_j)=w(|i-j|/q_n)$, so we obtain for the variance condition on $X_1,...,X_n$:
\begin{align*}
& \Var\bigg[\frac{1}{\sqrt{n}}\sum_{j=1}^k\sum_{i=\lfloor nt_{j-1}\rfloor+1}^{\lfloor nt_j\rfloor}\langle a_j,h_1(X_i)\varepsilon_i\rangle\Big| X_1,...,X_n\bigg]\\
&=\sum_{i=1}^n\sum_{l=1}^nV_{i,n}V_{l,n}\operatorname{Cov}(\varepsilon_i,\varepsilon_l)=\sum_{i=1}^n\sum_{l=1}^nV_{i,n}V_{l,n}w(|i-l|/q_n).
\end{align*}
This is the kernel estimator for the variance, which is consistent even for heteroscedastic time series under the assumptions of \cite{de2000consistency}. The $L_2$-NED follows by Lemma \ref{linLem}. Note that the mixing coefficients for absolute regularity are larger than the strong mixing coefficients used by \cite{de2000consistency}, so their mixing assumption follows directly from ours.
\end{proof}

\begin{prop}\label{bootpartial}  Under the assumptions of Theorem \ref{theo3}, we have the weak convergence (in the space $D_{H^2}[0,1]$)
\begin{equation*}
\bigg(\frac{1}{\sqrt{n}}\sum_{i=1}^{[nt]}(h_1(X_i),h_1(X_i)\varepsilon_i)\bigg)_{t\in[0,1]}\Rightarrow (W(t), W^\star(t))_{t\in[0,1]}
\end{equation*}
where $W$ and $W^\star$ are independent Brownian motions with covariance operator as in Theorem \ref{theo1}.
\end{prop}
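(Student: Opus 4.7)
The goal is weak convergence in $D_{H^2}[0,1]$, which I would split into (i) convergence of finite-dimensional distributions and (ii) tightness of each coordinate; the independence of $W$ and $W^\star$ in the limit then falls out of a conditioning argument. Since the first coordinate has already been handled in Proposition \ref{linThm}, the real work lies with the bootstrapped second coordinate and with the joint law.

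For the finite-dimensional convergence I would invoke the Cram\'er--Wold device. Fix $0 = t_0 < t_1 < \cdots < t_k \leq 1$ and vectors $a_1,\ldots,a_k,b_1,\ldots,b_k \in H$, and consider
\[
T_n = \sum_{j=1}^k \langle a_j, S_{j,n}\rangle + \sum_{j=1}^k \langle b_j, S_{j,n}^\star\rangle,
\]
where $S_{j,n}$ and $S_{j,n}^\star$ are the block increments of the two partial-sum processes over $(t_{j-1},t_j]$. Conditioning on $\mathcal{X}_n := \sigma(X_1,\ldots,X_n)$ and using the joint Gaussianity of $(\varepsilon_i)$ together with its independence from $\mathcal{X}_n$, the second sum is, given $\mathcal{X}_n$, centered Gaussian with variance $\sigma_n^2$, and Lemma \ref{bootvar} yields $\sigma_n^2 \xrightarrow{\mathcal{P}} \tau^2 := \Var\bigl[\sum_j \langle b_j, W^\star(t_j)-W^\star(t_{j-1})\rangle\bigr]$. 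Combining this with $\sum_j \langle a_j,S_{j,n}\rangle \xrightarrow{\mathcal{D}} \sum_j \langle a_j, W(t_j)-W(t_{j-1})\rangle$ from Proposition \ref{linThm} and applying bounded convergence,
\[
E\bigl[e^{iT_n}\bigr] = E\bigl[e^{i\sum_j \langle a_j,S_{j,n}\rangle}\cdot e^{-\sigma_n^2/2}\bigr] \longrightarrow e^{-\tau^2/2}\cdot E\bigl[e^{i\sum_j \langle a_j, W(t_j)-W(t_{j-1})\rangle}\bigr],
\]
which is exactly the characteristic function of the desired limit, with $W$ and $W^\star$ independent.

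Tightness of the first coordinate is immediate from Proposition \ref{linThm}. For the second, set $S_n^\star(t) = n^{-1/2}\sum_{i=1}^{\lfloor nt\rfloor} h_1(X_i)\varepsilon_i$. Using independence of $(\varepsilon_i)$ and $(X_i)$,
\[
E\|S_n^\star(t)-S_n^\star(s)\|^2 = \frac{1}{n}\sum_{i,j=\lfloor ns\rfloor+1}^{\lfloor nt\rfloor} E\langle h_1(X_i),h_1(X_j)\rangle\, w\bigl(|i-j|/q_n\bigr) \leq C(t-s),
\]
because the autocovariances $|E\langle h_1(X_i),h_1(X_j)\rangle|$ are summable in $|i-j|$ (via the $L_2$-NED property of Lemma \ref{linLem} together with the standard covariance inequality for NED sequences on an absolutely regular base) and $|w|\leq 1$. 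Such a second-moment bound, combined with a tightness criterion for $H$-valued c\`adl\`ag partial-sum processes in the spirit of \cite{STW16}, yields tightness of the second coordinate.

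The main obstacle I anticipate is promoting the scalar increment bound to tightness in the Hilbert-valued Skorokhod space $D_H[0,1]$: the inequality above controls all one-dimensional projections uniformly, but $H$-tightness additionally requires trace-class control of the covariance operators of the marginals, uniformly in $n$. This is inherited from the analogous control underlying Proposition \ref{linThm}, because introducing mean-zero multipliers with pointwise correlation bounded by $w(0)=1$ cannot inflate any finite-dimensional projection of the covariance beyond that of the unweighted partial sum. The independence of $W$ and $W^\star$ in the limit is then the pleasant byproduct of the conditioning: given $\mathcal{X}_n$, the bootstrap component is a Gaussian whose variance converges in probability to a deterministic quantity, so its limit is free of the realization of $X$.
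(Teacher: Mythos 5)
Your finite-dimensional argument is correct and in fact a little more streamlined than the paper's. The paper conditions on $X_1,\dots,X_n$, notes that the conditional law of the projected bootstrap increments is centered Gaussian with variance converging in probability (Lemma \ref{bootvar}), and then runs a subsequence argument over rational time points and a countable dense set of directions to extract almost-sure convergence of the conditional finite-dimensional distributions. Your characteristic-function computation
\begin{equation*}
\E\bigl[e^{iT_n}\bigr]=\E\Bigl[e^{i\sum_j\langle a_j,S_{j,n}\rangle}\,e^{-\sigma_n^2/2}\Bigr]\longrightarrow e^{-\tau^2/2}\,\E\Bigl[e^{i\sum_j\langle a_j,W(t_j)-W(t_{j-1})\rangle}\Bigr]
\end{equation*}
reaches the same joint limit, including the independence of $W$ and $W^\star$, in one step (using that $e^{-\sigma_n^2/2}$ is bounded and converges in probability to a constant while the first factor converges in distribution by Proposition \ref{linThm}). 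That part I would accept as an alternative to the paper's route.

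The gap is in the tightness of the multiplier component. The bound $\E\|S_n^\star(t)-S_n^\star(s)\|^2\leq C(t-s)$ sits exactly at the critical exponent and is \emph{not} a sufficient tightness criterion for partial-sum processes of dependent summands in $D_H[0,1]$: the moment criteria require an increment bound of order $|t-s|^{1+\epsilon}$ for some power of the norm, and since $(h_1(X_i)\varepsilon_i)_i$ is neither independent nor a martingale difference sequence there is no maximal inequality that upgrades a bare second-moment bound. This is precisely where the paper does the only real work in this proposition: because the multipliers are jointly Gaussian with $|w|\leq 1$, one has $|\E[\varepsilon_i\varepsilon_j\varepsilon_k\varepsilon_l]|\leq 3$, hence
\begin{equation*}
\bigl|\E\bigl[\langle h_1(X_i)\varepsilon_i,h_1(X_j)\varepsilon_j\rangle\langle h_1(X_k)\varepsilon_k,h_1(X_l)\varepsilon_l\rangle\bigr]\bigr|\leq 3\bigl|\E\bigl[\langle h_1(X_i),h_1(X_j)\rangle\langle h_1(X_k),h_1(X_l)\rangle\bigr]\bigr|,
\end{equation*}
so the fourth-moment increment bound underlying Lemma 2.24 of \cite{BBD01} and Theorem 1 of \cite{STW16} for the unweighted partial sums transfers verbatim to the weighted ones, and tightness follows as there. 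Your closing remark that the multipliers ``cannot inflate any finite-dimensional projection of the covariance'' controls second moments only and does not substitute for this fourth-moment step; note also that Gaussianity of the $\varepsilon_i$ is genuinely used here, not just their covariance structure. To repair the proof, insert the Gaussian fourth-moment reduction above and then invoke the tightness argument of \cite{STW16}.
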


\begin{proof} We have to prove finite-dimensional convergence and tightness. As the tightness for the first component was already established in the proof of Theorem 1 of \cite{STW16}, we only have to deal with the second component. The tightness of the partial sum process of $h_1(X_i)\varepsilon_i$, $i\in\N$, can be shown along the lines of the proof of the same theorem: For this note that by the independence of $(\varepsilon_i)_{i\leq n}$ and $X_1,...,X_n$
\begin{multline*}
\left|E\left[\langle h_1(X_i)\varepsilon_i,h_1(X_j)\varepsilon_j\rangle\langle h_1(X_k)\varepsilon_k,h_1(X_l)\varepsilon_l\rangle\right]\right|\\
=\left|E\left[\langle h_1(X_i),h_1(X_j)\rangle\langle h_1(X_k),h_1(X_l)\rangle\right]E[\varepsilon_i\varepsilon_j\varepsilon_k\varepsilon_l]\right|\\
\leq 3\left|E\left[\langle h_1(X_i),h_1(X_j)\rangle\langle h_1(X_k),h_1(X_l)\rangle\right]\right|, 
\end{multline*}
the rest follows as in Lemma 2.24 of \cite{BBD01} and in the proof of Theorem 1 of \cite{STW16}.

For the finite dimensional convergence, we will show the weak convergence of the second component conditional on $h_1(X_i)\varepsilon_i$, $i\in\N$, because the weak convergence of the first component is already established in Proposition \ref{linThm}. By the continuity of the limit process, it is sufficient to study the distribution for $t_1,..,t_k\in\mathbb{Q}\cap [0,1]$ and by the Cram\'er-Wold-device and the separability of $H$, it is enough to show the convergence of the condition distribution of $\frac{1}{\sqrt{n}}\sum_{j=1}^k\sum_{i=[nt_{j-1}]+1}^{[nt_j]}\langle a_j,h_1(X_i)\varepsilon_i\rangle$ for $a_1,...,a_k$ from a countable subset of $H$. Conditional on $X_1,...,X_n$, the distribution of  $\frac{1}{\sqrt{n}}\sum_{j=1}^k\sum_{i=[nt_{j-1}]+1}^{[nt_j]}\langle a_j,h_1(X_i)\varepsilon_i\rangle$ is Gaussian with expectation 0 and variance converging to the right limit in probability by Lemma \ref{bootvar}.

Using a well-known characterization of convergence in probability, for every subseries there is another subseries such that this convergence holds almost surely. So we can construct a subseries that the almost sure convergence holds for all $k$, $t_1,..,t_k\in\mathbb{Q}\cap [0,1]$ and all $a_1,...,a_k$ from the countable subset of $H$, so we can find a subseries such that the convergence of the finite-dimensional distributions holds almost surely. Thus, the finite-dimensional convergence of the conditional distribution holds in probability and the statement of the proposition is proved.
\end{proof}

\section{Proof of Main Results}\label{Proof Main}

\begin{proof}[Proof of Theorem \ref{theo1}]
We will bound the maximum from above by the sum of the degenerate and the linear part, using Hoeffding's decomposition, as shown in Lemma \ref{Hoeff}:
\begin{align*}
& \max_{1\leq k<n} \frac{1}{n^{3/2}} \Vert U_{n,k} \Vert   = \max_{1\leq k<n} \frac{1}{n^{3/2}} \Vert n \sum_{i=1}^k(h_1(X_i)-\overline{h_1(X)})+\sum_{i=1}^k\sum_{j=k+1}^n h_2(X_i,X_j) \Vert  \\& \leq  \max_{1\leq k<n} \frac{1}{n^{3/2}} \Vert n \sum_{i=1}^k(h_1(X_i)-\overline{h_1(X)})\Vert + \max_{1\leq k<n} \frac{1}{n^{3/2}} \Vert\sum_{i=1}^k\sum_{j=k+1}^n h_2(X_i,X_j) \Vert 
\end{align*}
by triangle inequality. For the degenerate part, we can use the convergence to 0 from Proposition \ref{degThm}:
\[ \max_{1\leq k<n} \frac{1}{n^{3/2}} \Vert\sum_{i=1}^k\sum_{j=k+1}^n h_2(X_i,X_j) \Vert \xrightarrow{P} 0  \]
since convergence in probability follows from almost sure convergence.\\
Now observe that we can write the linear part as
\begin{align*}
& \max_{1\leq k<n} \frac{1}{n^{3/2}} \Vert n \sum_{i=1}^k(h_1(X_i)-\overline{h_1(X)})\Vert =  \max_{\lambda\in [0,1]} \frac{1}{n^{3/2}} \Vert n \sum_{i=1}^{\left \lfloor{n\lambda}\right\rfloor}(h_1(X_i)-\overline{h_1(X)})\Vert \\
& = \max_{\lambda\in [0,1]} \frac{1}{n^{3/2}} \Vert n  \sum_{i=1}^{\left \lfloor{n\lambda}\right\rfloor}h_1(X_i) -n \left \lfloor{n\lambda}\right\rfloor \frac{1}{n}\sum_{j=1}^n h_1(X_j) \Vert \\
& = \max_{\lambda \in [0,1]} \Vert \frac{1}{\sqrt{n}} \sum_{i=1}^{\left \lfloor{n\lambda}\right\rfloor} h_1(X_i) - \frac{\left \lfloor{n\lambda}\right\rfloor}{n^{3/2}} \sum_{j=1}^n h_1(X_j) \Vert\displaybreak[0]  \\
& \approx  \sup_{\lambda\in[0,1]} \Vert \underbrace{\frac{1}{\sqrt{n}} \sum_{i=1}^{\left \lfloor{n\lambda}\right\rfloor} h_1(X_i)}_{=: x(\lambda)} -\frac{\lambda}{\sqrt{n}} \sum_{j=1}^n h_1(X_i) \Vert \;\;\; \text{for $n$ large enough}\\
& = \sup_{\lambda\in[0,1]} \Vert x(\lambda)- \lambda x(1) \Vert
\end{align*}
We know by Proposition \ref{linThm} that 
\[ (x(\lambda))_{\lambda \in [0,1]} \xrightarrow{\mathcal{D}} (W(\lambda))_{\lambda\in[0,1]}   \]
By the continuous mapping theorem it follows that $(x(\lambda)-\lambda x(1))_{\lambda \in[0,1]} \xrightarrow{\mathcal{D}} (W(\lambda)-\lambda W(1))_{\lambda\in[0,1]}$.
And thus we can finally conclude that
\[\max_{1\leq k<n} \frac{1}{n^{3/2}} \Vert U_{n,k} \Vert \xrightarrow{\mathcal{D}} \sup_{\lambda \in[0,1]} \Vert W(\lambda)-\lambda W(1) \Vert.   \]
\end{proof}

\begin{proof}[Proof of Theorem \ref{theo2}] We can bound the maximum from below using the reverse triangle inequality and then make use of previous results:
\begin{align*}
& \max_{1\leq k\leq n} \Vert \frac{1}{n^{3/2}} U_{n,k}(Y) \Vert  \geq \Vert \frac{1}{n^{3/2}} U_{n,k^\star}(Y) \Vert \;\;\;\;\;\;\text{where $k^\star=\left \lfloor{n\lambda^\star}\right\rfloor$}\\
& =  \Vert \frac{1}{n^{3/2}} \big( U_{n,k^\star}(Y) -k^\star(n-k^\star) \E[h(X_0,\tilde{Z}_0)] \big) + \frac{k^\star(n-k^\star)}{n^{3/2}}  \E[h(X_0,\tilde{Z}_0)]\Vert  \\
& \geq \Big\vert \Vert \frac{1}{n^{3/2}} (  U_{n,k^\star}(Y) -k^\star(n-k^\star) \E[h(X_0,\tilde{Z}_0)] )\Vert - \Vert \frac{k^\star(n-k^\star)}{n^{3/2}}  \E[h(X_0,\tilde{Z}_0)]\Vert \Big\vert \\
& \;\;\; \text{by using the reverse triangle inequality} \\
& = \Big\vert \Vert \frac{1}{n^{3/2}} \sum_{i=1}^{k^\star} \sum_{j=k^\star+1}^n \big(h_1^\star(X_i)-h_1(Z_j) + h _2(X_i,Z_j) - \E[h_2(X_i,Z_j)]\big) \Vert \\
& \hspace{70pt} - \Vert \frac{k^\star(n-k^\star)}{n^{3/2}}  \E[h(X_0,\tilde{Z}_0)]\Vert \Big\vert\\
& \geq \Vert \frac{k^\star(n-k^\star)}{n^{3/2}}  \E[h(X_0,\tilde{Z}_0)]\Vert
- \Vert  \frac{1}{n^{3/2}} \sum_{i=1}^{k^\star} \sum_{j=k^\star+1}^n\big( h_1^\star(X_i)-h_1(Z_j) -2 \E[h(X_0,\tilde{Z}_0)]\big) \Vert \\
& \hspace{70pt} - \Vert \frac{1}{n^{3/2}} \sum_{i=1}^{k^\star} \sum_{j=k^\star+1}^n h _2(X_i,Z_j) +\E[h(X_0,\tilde{Z}_0)] \Vert 
\end{align*}
by using the reverse triangle inequality again. By Corollary \ref{A_cor} we know that
\[ \Vert  \frac{1}{n^{3/2}} \sum_{i=1}^{k^\star} \sum_{j=k^\star+1}^n h_1^\star(X_i)-h_1(Z_j) - 2\E[h_1^\star(X_i)-h_1(Z_j)] \Vert  \]
is stochastically bounded.
And by Lemma \ref{A_deg} it holds that
\[\Vert \frac{1}{n^{3/2}} \sum_{i=1}^{k^\star} \sum_{j=k^\star+1}^n h _2(X_i,Z_j) + \E[h_2(X_i,Z_j)] \Vert \xrightarrow{n \to \infty} 0 \]
But since $\E[h(X_0,\tilde{Z}_0)] \neq 0$ the last part diverges to infinity:
\[\Vert \frac{1}{n^{3/2}} k^\star(n-k^\star) \E[h(X_0,\tilde{Z}_0)] \Vert \approx \Vert \sqrt{n} \lambda^\star(1-\lambda^\star)  \E[h(X_0,\tilde{Z}_0)] \Vert \xrightarrow{n \to \infty} \infty,  \]
and thus $\max\limits_{1\leq k\leq n} \Vert \frac{1}{n^{3/2}} U_{n,k}(Y) \Vert \xrightarrow{n \to \infty} \infty$. 
\end{proof}

\begin{proof}[Proof of Theorem \ref{theo3}] Because the convergence in distribution of $\max\limits_{1\leq k<n} \frac{1}{n^{3/2}}|| U_{n,k}||$ has already been established in Theorem \ref{theo1}, it is enough to prove the convergence in distribution of $\max\limits_{1\leq k<n} \frac{1}{n^{3/2}}|| U_{n,k}^\star||$ conditional on $X_1,...,X_n$. For this, we apply the Hoeffding decomposition:
\begin{multline*}
\frac{1}{n^{3/2}}U_{n,k}^\star=\frac{1}{n^{3/2}}\sum_{i=1}^k\sum_{j=k+1}^n h(X_i,X_j)(\varepsilon_i+\varepsilon_j)\\
=\frac{1}{n^{3/2}}\sum_{i=1}^k\sum_{j=k+1}^n (h_1(X_i)-h_1(X_j)(\varepsilon_i+\varepsilon_j)+\frac{1}{n^{3/2}}\sum_{i=1}^k\sum_{j=k+1}^n h_2(X_i,X_j)(\varepsilon_i+\varepsilon_j)
\end{multline*}
The second sum converges to 0 by Proposition \ref{degPart}. The first summand can be split into three parts with a short calculation:
\begin{multline*}
\frac{1}{n^{3/2}}\sum_{i=1}^k\sum_{j=k+1}^n (h_1(X_i)-h_1(X_j))(\varepsilon_i+\varepsilon_j)=\frac{1}{\sqrt{n}}\left(\sum_{i=1}^kh_1(X_i)\varepsilon_i+\frac{k}{n}\sum_{i=1}^nh_1(X_i)\varepsilon_i\right)\\
+\frac{1}{n^{3/2}}\sum_{i=1}^kh_1(X_i)\sum_{j=1}^n\varepsilon_j+\frac{1}{n^{3/2}}\sum_{i=1}^nh_1(X_i)\sum_{j=1}^k\varepsilon_j
\end{multline*}
By Proposition \ref{bootpartial} and the continuous mapping theorem, we have the weak convergence 
\begin{equation*}
\max_{1\leq k<n} \left\| \frac{1}{\sqrt{n}}\left(\sum_{i=1}^kh_1(X_i)\varepsilon_i+\frac{k}{n}\sum_{i=1}^nh_1(X_i)\varepsilon_i\right)\right\|\Rightarrow \sup_{\lambda\in[0,1]}\left\| W^\star(\lambda)-\lambda W^\star(1)\right\|
\end{equation*}
conditional on $X_1,...,X_n$. For the second part, note that 
\begin{equation*}
\Var\left(\frac{1}{n}\sum_{i=1}^n \varepsilon_i\right)=\frac{1}{n^2}\sum_{i,j=1}^nw(|i-j|/q_n)\leq \frac{1}{n}\sum_{i=-n}^n |w(i/q_n)|\approx \frac{q_n}{n}\int_{-\infty}^\infty |w(x)|dx\rightarrow0
\end{equation*}
for $n\rightarrow\infty$ by our assumptions on $q_n$. So $\frac{1}{n}\sum_{i=1}^n \varepsilon_i\rightarrow 0$ in probability and 
\begin{equation*}
\max_{k=1,...,n}\Big|\frac{1}{n^{3/2}}\sum_{i=1}^kh_1(X_i)\sum_{j=1}^n\varepsilon_j\Big|=\max_{k=1,...,n}\Big|\frac{1}{n^{1/2}}\sum_{i=1}^kh_1(X_i)\Big|\Big|\frac{1}{n}\sum_{j=1}^n\varepsilon_j\Big|\rightarrow0
\end{equation*}
for $n\rightarrow\infty$ in probability using the fact that $\frac{1}{n^{1/2}}\sum_{i=1}^kh_1(X_i)$ is stochastically bounded, see Proposition \ref{linThm}. For the third part, we consider increments of the partial sum and bound the variance of increments similar as above by
\begin{equation*}
\Var\left(\sum_{i=l+1}^k \varepsilon_i\right)\leq Ckq_n.
\end{equation*}
Because the $\varepsilon_i$ are Gaussian, it follows that
\begin{equation*}
\E\left[\Big(\sum_{i=l+1}^k\varepsilon_i\Big)^4\right]\leq C(kq_n)^2.
\end{equation*}
By Theorem 1 of \cite{M76}, we have
\begin{equation*}
\E\left[\max_{k=1,..,n}\Big(\sum_{i=1}^k\varepsilon_i\Big)^4\right]\leq C(nq_n)^2.
\end{equation*}
and $\frac{1}{n}\max_{k=1,..,n}|\sum_{i=1}^k\varepsilon_i|\rightarrow 0$ in probability because $q_n/n\rightarrow0$. So
\begin{equation*}
\max_{k=1,...,n}\Big|\frac{1}{n^{3/2}}\sum_{i=1}^nh_1(X_i)\sum_{j=1}^k\varepsilon_j\Big|=\Big|\frac{1}{n^{1/2}}\sum_{i=1}^nh_1(X_i)\Big|\max_{k=1,...,n}\Big|\frac{1}{n}\sum_{j=1}^k\varepsilon_j\Big|\xrightarrow{n\rightarrow\infty}0
\end{equation*}
which completes the proof.
\end{proof}

\subsection*{Appendix}

\begin{table}[ht]
\centering
\resizebox{\columnwidth}{!}{%
\begin{tabular}{ |c|c|c|c|c|c|c|c|c| } 
\hline
\multicolumn{9}{|c|}{Empirical Power}\\
\hline
& \multicolumn{2}{|c|}{Scenario 1} & \multicolumn{2}{|c|}{Scenario 2} & \multicolumn{2}{|c|}{Scenario 3} & \multicolumn{2}{|c|}{Scenario 4} \\
\hline
$\alpha$ &   CUSUM   & Spatial Sign &    CUSUM  & Spatial Sign & CUSUM     & Spatial Sign & CUSUM     & Spatial Sign \\
\hline
$0.1$    &   $0.907$ & $0.975$     & $0.774$   &  $0.903$   &  $0.635$ &  $0.981$ &  $0.038$& $0.994$   \\
$0.05$   &  $0.796$ & $0.929$    & $0.609$ &  $0.802$   &  $0.456$ &  $0.934$ & $0.014$ & $0.967$ \\
$0.025$  & $0.660$   & $0.846$    & $0.451$ &  $0.650$   &  $0.283$ &  $0.839$ & $0.005$ & $0.906$  \\
$0.01$   & $0.409$   & $0.627$    & $0.231$ &  $0.405$   &  $0.115$ &  $0.621$  & $0.001$   & $0.721$  \\
\hline
\end{tabular}%
}
\caption{Empirical power of CUSUM and patial sign for different significance level $\alpha$, Scenario 1-4.}
\label{T_A123}
\end{table}

The two additional scenarios to analyse what happens if the change point lies more closely to the beginning of the observations or if $d>>n$ are designed as follows:
\begin{itemize}
\item[Scenario 5:] Uniform Jump of $+0.3$ after $\gamma n$ observations:
\[ Y_i=\begin{cases} X_i \;\;\; & i<\gamma n \\ X_i +0.3 & i\geq \gamma n  \end{cases}  \text{  with $\gamma=0.3$ and $\gamma=0.15$ resp.}\]
\item[Scenario 6:] As Scenario 1 but with $n=150$, $d=350$.
\end{itemize}

\begin{table}[ht]
\centering
\begin{tabular}{|c|c|c|c|c|}
\hline
\multicolumn{3}{|c|}{Empirical Size - Scenario 6}\\
\hline
$\alpha$ &   CUSUM   & Spatial Sign \\
\hline
$0.1$    &  $0.067$& $0.064$    \\
$0.05$   &  $0.025$& $0.020$     \\
$0.025$  &  $0.005$& $0.007$    \\
$0.01$   &  $0.001$& $0.002$    \\
\hline
\end{tabular}
\caption{Empirical size of CUSUM and spatial sign for different significance level $\alpha$, Scenario 6.}
\label{T_SLd}
\end{table}

\begin{table}[ht]
\centering
\resizebox{\columnwidth}{!}{%
\begin{tabular}{ |c|c|c|c|c|c|c| } 
\hline
\multicolumn{7}{|c|}{Empirical Power}\\
\hline
& \multicolumn{2}{|c|}{Scenario 5, $\gamma=0.3$} & \multicolumn{2}{|c|}{Scenario 5, $\gamma=0.15$} & \multicolumn{2}{|c|}{Scenario 6} \\
\hline
$\alpha$ &   CUSUM   & Spatial Sign &    CUSUM  & Spatial Sign & CUSUM     & Spatial Sign \\
\hline
$0.1$    &  $0.795$&  $0.909$   &  $0.337$  &  $0.372$     &  $0.759$  & $0.903$    \\
$0.05$   &  $0.619$&  $0.783$   &  $0.173$&  $0.193$     &  $0.586$& $0.757$      \\
$0.025$  &  $0.439$  &  $0.599$   &  $0.076$  &  $0.084$   &  $0.391$  & $0.552$    \\
$0.01$   &  $0.218$  &  $0.329$   &  $0.024$&  $0.027$     &  $0.145$& $0.239$      \\
\hline
\end{tabular}%
}
\caption{Empirical power of CUSUM and spatial sign for different significance level $\alpha$, Scenario 5 and 6.}
\label{T_A56}
\end{table}

The size-power plots of Scenarios 5 and 6 (Figure \ref{S_P_A}) show that spatial sign based test suffers less loss in power than the CUSUM test if the change-point lies closer to the beginning of the observations or if $d$ becomes larger than $n$.

In particular we see (Table \ref{T_A56}) that in Scenario 5 with $\gamma=0.3$, the power of both statistics is smaller than in Scenario 1 where the change-point is in the middle of the observations. Nevertheless, the empirical power of the spatial sign based test is still larger than the empirical power of CUSUM and for $\alpha=0.1$ spatial signs still provides empirical power of about $0.9$. For $\gamma=0.15$ we see a drastic decline in power for both statistics, with empirical power smaller than $0.4$ even for $\alpha=0.1$. Spatial sign nevertheless keeps a small advantage over CUSUM in this scenario. 

In the last scenario we observe the situation of $d>>n$. For empirical size, we generated data as described in Chapter \ref{Simulations}, but with $n=150$ and $d=350$ and received the values presented in Table \ref{T_SLd}. We see that the size of both statistics is even smaller than under Scenario 1. But looking at the empirical power (Table \ref{T_A56}), we see a reduction of power for both statistics compared to Scenario 1. Nevertheless, we can still observe that the Wilcoxon-type test provides a greater empirical power than CUSUM. Particularly for $\alpha=0.1$, the test using spatial sign still shows a power of about $0.9$.

%[H]
\begin{figure}
\begin{center}
\includegraphics[width=\textwidth]{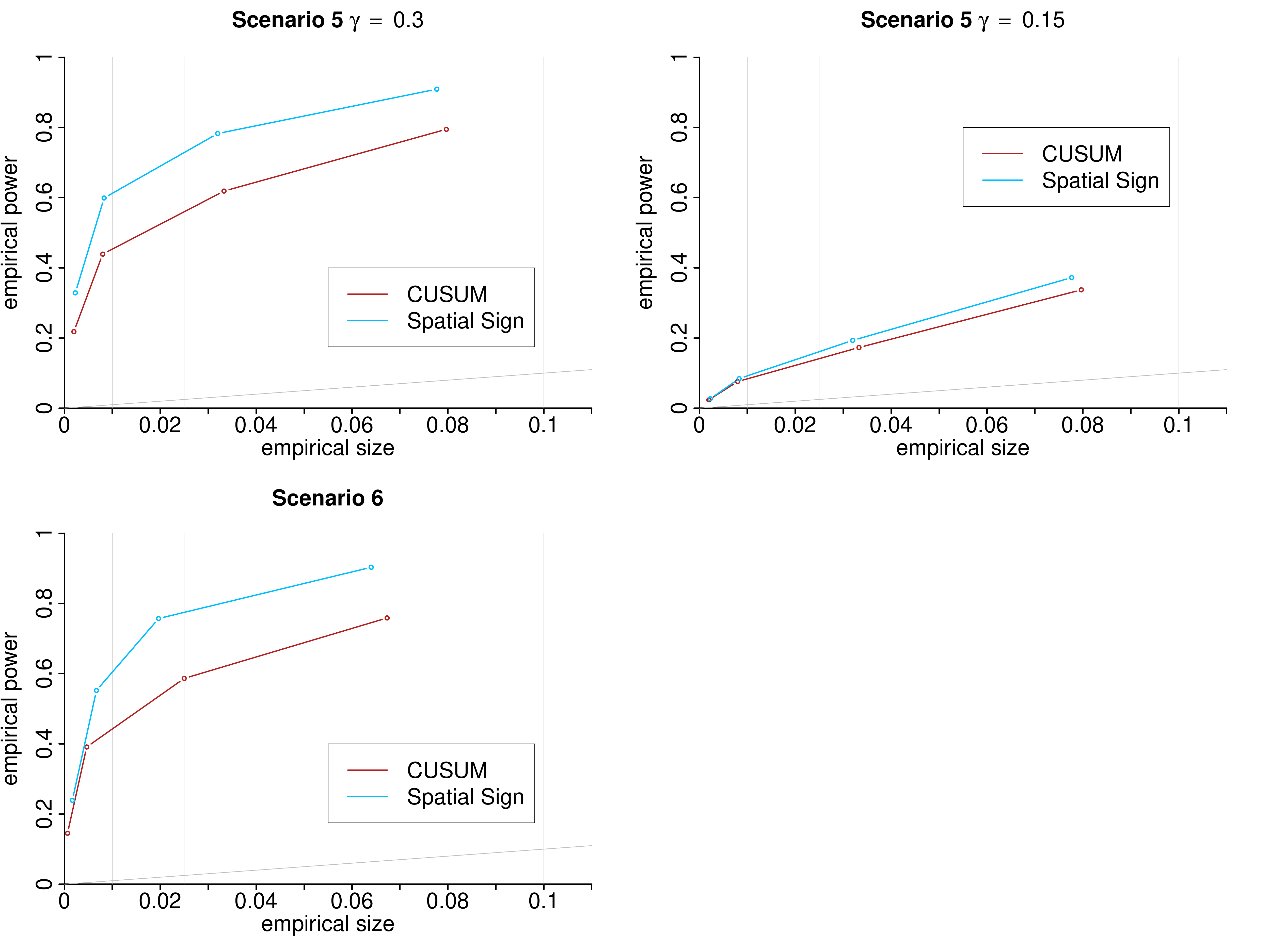}
\caption{Size-Power-Plot for CUSUM and spatial sign, Scenarios 5 and 6.}
\label{S_P_A}
\end{center}
\end{figure}

\bibliography{literatur} 
\end{document}